\newtheorem{theorem}{Theorem}[section]
\newtheorem*{theoremE*}{Theorem E}
\newtheorem*{theoremD*}{Theorem D}
\newtheorem*{theoremC*}{Theorem C}
\newtheorem*{theoremB*}{Theorem B}
\newtheorem*{theoremA*}{Theorem A}
\newtheorem*{mainthm*}{Main Theorem}
\newtheorem{lemma}[theorem]{Lemma}
\newtheorem{definition}[theorem]{Definition}
\newtheorem{proposition}[theorem]{Proposition}
\newtheorem*{definition*}{Definition}
\theoremstyle{remark}
\makeatletter\@addtoreset{equation}{section}
\makeatletter\@addtoreset{equation}{section}
\makeatletter\@addtoreset{section}{part}
\title{Conical K\"ahler-Einstein metrics on K-unstable del Pezzo surfaces}
\author{Dasol Jeong}
\author{Jihun Park}
\begin{document}

\begin{abstract}
    We establish the optimal upper bounds for cone angles of  K\"ahler-Einstein metrics with conical singularities along smooth anticanonical divisors on smooth K-unstable del Pezzo surfaces.
\end{abstract}

\subjclass[2020]{14J45, 14J26, 32Q20, 53C25}
\maketitle

\section{Introduction}

Let $X$ be a Fano manifold and let $D$ be a smooth anticanonical divisor on $X$. A Kähler–Einstein metric $\omega$ on $X$ with  conical singularities along $D$ satisfies the following 
equation:
\begin{equation}\label{cKE-def}
    \mathrm{Ric}(\omega) = \lambda \omega + (1 - \lambda)[D],
\end{equation}
where the cone angle along $D$ is $2\pi\lambda$ for some $\lambda$ in $(0,1]$, and $[D]$ denotes the current of integration along $D$. Metrics satisfying \eqref{cKE-def} play a key role in the study of the Yau–Tian–Donaldson conjecture (see \cite{Don12, CDS14a, CDS14b, CDS14c}). In particular, if $\lambda=1$, meaning that the cone angle is $2\pi$, then the metric is a K\"ahler-Einstein metric on $X$.

If $X$ does not admit a smooth Kähler–Einstein metric, then the supremum
\[
    R(X,D) := \sup\left\{ \lambda > 0 \,\middle|\, \text{equation \eqref{cKE-def} admits a solution} \right\}
\]
is strictly less than $1$.

An interesting  comparison is with the equation
\[
    \mathrm{Ric}(\omega) = \lambda \omega + (1 - \lambda)\rho,
\]
where $\rho$ is a fixed smooth positive $(1,1)$-form representing $2\pi c_1(X)$. This formulation appears in the continuity method used to establish the existence of Kähler–Einstein metrics on manifolds with $c_1(X) \leq 0$ (see \cite{Aub76, Yau77, Yau78}). The corresponding greatest Ricci lower bound is defined by
\begin{equation}\label{continuity}
    R(X) := \sup\left\{ t \in \mathbb{R} \,\middle|\,  \ \text{there is a (1,1)-form } \omega \in c_1(X)\ \text{such that}\ \mathrm{Ric}(\omega) > t \omega \right\}.
\end{equation}
Note that $R(X) = 1$ if and only if $X$ admits a smooth Kähler–Einstein metric.

Due to the formal similarity between equations \eqref{cKE-def} and \eqref{continuity}, Donaldson conjectured in 2012 that $R(X,D) = R(X) $ (\cite[Conjecture~1]{Don12}). Song and Wang verified the conjecture in a variational framework, by further considering pluri-anticanonical divisors (\cite{SW16}). However, Székelyhidi provided counterexamples in the surface case, showing that the equality does not hold in general (\cite{Sze12}).

To explain Székelyhidi’s counterexamples, let $\phi_1 : S_1 \to \mathbb{P}^2$ denote the blowup at a point $x$ on $\mathbb{P}^2$ with exceptional divisor $E$.  
Similarly, let $\phi_2 : S_2 \to \mathbb{P}^2$ be the blowup at two distinct points $x_1, x_2$ in $\mathbb{P}^2$ with corresponding exceptional divisors $A^1$ and $A^2$.  
We will retain this notation throughout the remainder of the article.

It is known that neither $S_1$ nor $S_2$ admits a Kähler–Einstein metric (\cite{Tian90}). They are the only smooth del Pezzo surfaces that allow no Kähler–Einstein metric. The greatest Ricci lower bounds for these surfaces have been computed (see \cite{Sze11, Li11}) as
\begin{equation}\label{delta S_i}
    R(S_1) = \frac{6}{7} < 1, \quad R(S_2) = \frac{21}{25} < 1.
\end{equation}
Furthermore, the assertion below gives us counterexamples to Donaldson's conjecture.
\begin{theorem}[{\cite[Theorem 1]{Sze12}}]
    On $S_1$, for any smooth anticanonical divisor $C^1$ 
    \[R(S_1, C^1) \leq \frac{4}{5} < \frac{6}{7} = R(S_1).\]
    On $S_2$, if a smooth anticanonical divisor $C^2$ passes through the intersection point of two $(-1)$-curves, then
    \[
        R(S_2, C^2) \leq \frac{7}{9} < \frac{21}{25} = R(S_2).
    \]
\end{theorem}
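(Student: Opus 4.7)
The plan is to apply the algebraic obstruction to existence of conical K\"ahler--Einstein metrics coming from log K-stability. A solution of \eqref{cKE-def} on $S_i$ with cone angle $2\pi\lambda$ along $C^i$ forces the log Fano pair $(S_i, (1-\lambda) C^i)$ to be log K-semistable, which by the valuative criterion amounts to the inequality
\[
A_{(S_i,\, (1-\lambda) C^i)}(v) \;\geq\; \lambda \cdot S_{-K_{S_i}}(v)
\]
holding for every prime divisorial valuation $v$ over $S_i$, where
\[
S_{-K_{S_i}}(v) \;=\; \frac{1}{(-K_{S_i})^{2}} \int_{0}^{\tau(v)} \mathrm{vol}\bigl(\pi^{\ast}(-K_{S_i}) - t E_v\bigr)\,dt
\]
and $\tau(v)$ denotes the pseudo-effective threshold. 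It therefore suffices, for each claimed upper bound, to exhibit a single divisorial valuation whose inequality saturates at the asserted value of $\lambda$.

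For $S_1$, I would take $F$ to be the exceptional divisor of the blowup $\pi : Y \to S_1$ at a point $p \in E \cap C^1$. Since $p$ lies on the smooth divisor $C^1$, the log discrepancy is $A_{(S_1, (1-\lambda) C^1)}(F) = 2 - (1-\lambda) = 1 + \lambda$. The computation of $S_{-K_{S_1}}(F)$ proceeds by Zariski decomposition of $\pi^{\ast}(-K_{S_1}) - tF$ on $Y$: three regimes $[0,1]$, $[1,2]$, $[2,5]$ arise as one crosses the values of $t$ at which first the proper transform $\tilde E$ (a $(-2)$-curve on $Y$) and then the strict transform of the unique line through the original centre lying in the direction $p$ start to intersect the test divisor negatively. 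Integrating the self-intersection of the positive part piecewise gives $\tau(F) = 5$ and $S_{-K_{S_1}}(F) = 9/4$, whence $1 + \lambda \geq \tfrac{9}{4}\lambda$ forces $\lambda \leq 4/5$.

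For $S_2$, the corresponding valuation is $G$, the exceptional divisor of the blowup at the intersection point $q := A^1 \cap L$ of two of the three $(-1)$-curves on $S_2$, where $L$ is the strict transform of the line through $x_1$ and $x_2$. The hypothesis $q \in C^2$ yields $A_{(S_2, (1-\lambda) C^2)}(G) = 1 + \lambda$ by the same calculation. On $\mathrm{Bl}_q S_2$ the Zariski decomposition of $\pi^{\ast}(-K_{S_2}) - tG$ has three regimes $[0,1]$, $[1,3]$, $[3,5]$, organised this time by the successive entry of $\tilde A^1$ and $\tilde L$ (both $(-2)$-curves after the blowup) and finally $\tilde A^2$ into the negative part. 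The outcome is $\tau(G) = 5$ and $S_{-K_{S_2}}(G) = 16/7$, yielding $\lambda \leq 7/9$.

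The principal obstacle is the careful Zariski-decomposition bookkeeping on the two blowups: one must identify every curve that becomes negatively intersecting with the test divisor as $t$ grows, update the positive part accordingly, and integrate its self-intersection across the successive regimes. The hypothesis on $C^2$ in the second case is decisive at precisely one step --- it forces the log discrepancy $A_{(S_2, (1-\lambda) C^2)}(G)$ to drop from $2$ to $1+\lambda$, which is exactly what sharpens the bound from the weaker value one would obtain from the same valuation without the hypothesis down to the asserted $7/9$.
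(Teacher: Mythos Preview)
Your approach is correct and coincides with the paper's own. The paper does not reprove this cited result of Sz\'ekelyhidi in isolation, but the valuations you choose---the ordinary blowup at $p=E\cap C^1$ for $S_1$ and at the node of two $(-1)$-curves for $S_2$---are exactly those used in Lemmas~\ref{1_CE_notflex} and~\ref{2_CAB} to obtain the upper-bound direction of the Main Theorem; your Zariski-decomposition regimes $[0,1],[1,2],[2,5]$ and $[0,1],[1,3],[3,5]$ and the resulting values $S_{-K_{S_1}}=9/4$, $S_{-K_{S_2}}=16/7$ match those computations verbatim (with the minor clarification that in the $S_2$ case $\tilde A^1$ and $\tilde L$ enter the negative part simultaneously at $t=1$).
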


Meanwhile, Cheltsov and Martinez-Garcia gave the following lower bounds for $R(S_i,C^i)$ for $i=1,2$.
\begin{theorem}[{\cite[Corollaries 1.11~and 1.12 ]{CM-G16}}]  On $S_1$, we have $R(S_1, C^1) \geq \frac{3}{10}$. Moreover, if $C^1$ is chosen to be general in the linear system $|-K_{S_1}|$, then $R(S_1, C^1) \geq \frac{3}{7}$.
On $S_2$, $R(S_2, C^2) \geq \frac{3}{7}$, and in fact $R(S_2, C^2) \geq \frac{1}{2}$ unless $C^2$ passes through the intersection point of two $(-1)$-curves.
  \end{theorem}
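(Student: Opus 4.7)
The plan is to apply a log version of Tian's alpha-invariant criterion. By work of Berman together with Jeffres--Mazzeo--Rubinstein, a conical K\"ahler--Einstein metric with cone angle $2\pi\lambda$ along $D$ exists on a smooth del Pezzo surface $X$ whenever the global log canonical threshold
\[
\alpha\bigl(X,\,(1-\lambda)D\bigr) := \sup\Bigl\{c > 0 : \bigl(X, (1-\lambda)D + c\Delta\bigr)\text{ is log canonical for every effective }\Delta\sim_{\mathbb{Q}} -\lambda K_X\Bigr\}
\]
exceeds $\tfrac{2}{3}$. Hence the whole proof reduces to bounding this log alpha invariant from below for the pairs $(S_i,(1-\lambda)C^i)$ at the claimed values of $\lambda$.

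The strategy, standard since Tian--Demailly, is a case analysis of effective $\mathbb{Q}$-divisors $\Delta\sim_{\mathbb{Q}}-\lambda K_{S_i}$. First, I would decompose $\Delta$ with respect to the exceptional $(-1)$-curves ($E$ on $S_1$; $A^1$ and $A^2$ on $S_2$) and use non-negativity of intersections with the residual effective part to bound the coefficients of the $(-1)$-curves in $\Delta$. Next, at any candidate log-non-canonical point $p\in S_i$, the standard intersection inequality $\mathrm{mult}_p(\Delta)\cdot \mathrm{mult}_p(C^i)\leq \Delta\cdot C^i = \lambda\, K_{S_i}^2$, combined with smoothness of $C^i$, gives $\mathrm{mult}_p(\Delta)\leq \lambda K_{S_i}^2$. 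A case analysis on the location of $p$---on a $(-1)$-curve, on $C^i$ but away from a $(-1)$-curve, at a special intersection point, or generic---then certifies log canonicity of $\bigl(S_i,(1-\lambda)C^i+c\Delta\bigr)$ for some $c > \tfrac{2}{3}$. This produces the universal bounds $\lambda=\tfrac{3}{10}$ on $S_1$ and $\lambda=\tfrac{3}{7}$ on $S_2$. The sharpened bounds $\tfrac{3}{7}$ on $S_1$ (for general $C^1$) and $\tfrac{1}{2}$ on $S_2$ (when $C^2$ avoids $A^1\cap A^2$) follow by the same scheme: the additional hypothesis removes the worst destabilizing configurations from the case analysis---tangency of $C^1$ with fibers of the conic bundle structure through $E$ in the first instance, and the divisor $A^1+A^2+C^2$ in the second.

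\textbf{The main obstacle} is the log canonical threshold computation at the worst local configurations. For $S_1$ this is at the point $E\cap C^1$, where $E$ and $C^1$ already pair up to form a singular configuration pushed close to the log canonical boundary. For $S_2$ with $C^2\ni p_0:=A^1\cap A^2$, the divisor $A^1+A^2$ has multiplicity $2$ at $p_0$ and meets $C^2$ in a way that forces a weighted-blow-up discrepancy computation at $p_0$; this is precisely what prevents pushing $R(S_2,C^2)$ beyond $\tfrac{3}{7}$ in that case, and conversely, removing the hypothesis $C^2\ni p_0$ allows the stronger bound $\tfrac{1}{2}$. Carrying out each of these local discrepancy calculations and verifying that they are sharp for the stated numerical values is the technically delicate core of the proof.
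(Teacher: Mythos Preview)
The paper does not prove this statement itself: it is quoted verbatim as prior work from Cheltsov--Martinez-Garcia \cite{CM-G16}, and the present paper instead proves the much sharper Theorems~3.1 and~3.2 via the $\delta$-invariant and Abban--Zhuang/Fujita refinement technology (Theorem~\ref{AZ}). So there is no in-paper proof to compare against. Your alpha-invariant outline is in fact the method of the cited source \cite{CM-G16}, not of this paper; in that sense your plan is on target for the original result, but it is a different route from what the surrounding paper develops. The paper's own machinery would recover these bounds as weak corollaries of its exact computations of $\delta(S_i,(1-\lambda)C^i)$.

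There is, however, a genuine geometric error in your $S_2$ discussion that would derail the case analysis. You write $p_0 := A^1\cap A^2$ and claim $A^1+A^2$ has multiplicity~$2$ there. But $A^1$ and $A^2$ are the exceptional divisors over two \emph{distinct} points of $\mathbb{P}^2$, so $A^1\cap A^2=\varnothing$. The surface $S_2$ has three $(-1)$-curves: $A^1$, $A^2$, and the strict transform $B$ of the line through the two blown-up points; the ``intersection point of two $(-1)$-curves'' in the theorem means $A^i\cap B$ for $i=1$ or $2$. The worst configuration is therefore $A^i+B+C^2$ meeting at such a point, not $A^1+A^2+C^2$. Your local discrepancy computation would have to be redone at $A^i\cap B$, and the relevant destabilizing divisor to exclude under the extra hypothesis is the one supported on $A^i+B$, not on $A^1+A^2$. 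With that correction the alpha-invariant scheme you describe is the right one for \cite{CM-G16}.
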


In the present article, we determine the explicit values $R(S_1,C^1)$ and $R(S_2,C^2)$ for arbitrary smooth anticanonical divisors $C^1$ on $S_1$ and $C^2$ on $S_2$,
using the techniques developed by Denisova (\cite{Den25}).

\begin{mainthm*}\label{mainthm}
Let $C^1$ (resp. $C^2$) be a smooth anticanonical divisor on $S_1$ (resp. $S_2$).
Then
\[
\begin{aligned}
    R(S_1, C^1) &= 
    \begin{cases}
        \dfrac{3}{4} &  \text{if $C^1$ is tangent to the $0$-curve at the intersection point of $E$ and $C^1$};
        \\[1.1em]
        \dfrac{4}{5} & \text{otherwise;}
    \end{cases} \\[1em]
    R(S_2, C^2) &=
    \begin{cases}
        \dfrac{7}{9} & \text{if $C^2$ passes through the intersection of two $(-1)$-curves;}\\[1.1em]
        \dfrac{21}{25} & \text{otherwise.}
    \end{cases}
\end{aligned}
\]
\end{mainthm*}

\section{K\"ahler-Einstein metric, K-stability and $\delta$-invariant }

Let $(X,\Delta)$ be a log Fano pair, that is,
\begin{itemize}
    \item $X$ is a normal projective $\mathbb{Q}$-factorial variety;
    \item $\Delta$ is an effective $\mathbb{Q}$-divisor;
    \item $(X,\Delta)$ is a klt pair;
    \item $-(K_X+\Delta)$ is ample.
\end{itemize}

The existence of a K\"ahler-Einstein metric on the log Fano pair $(X,\Delta)$ is known to be equivalent to the K-polystability of $(X,\Delta)$ in a fully general setting (see \cite{B16, BBJ21,  CDS14a, CDS14b, CDS14c,  Li22, LTW22, LXZ22, T15}). However, for our purposes, we only need the following special case:

\begin{theorem}[{\cite[Corollary 1.2]{LTW22}},\cite{LXZ22}]\label{Kst and delta}
Let $(X,\Delta)$ be a log Fano pair with discrete automorphism group. Then 
$(X,\Delta)$ admits a K\"ahler-Einstein metric if and only if it is K-stable.
\end{theorem}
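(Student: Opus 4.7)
The plan is to derive the equivalence as a synthesis of \cite{LTW22} and \cite{LXZ22}, together with the standard observation that having a discrete automorphism group eliminates the usual gap between K-stability, K-polystability, and uniform K-stability of a log Fano pair.

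First I would dispose of the direction that a K\"ahler-Einstein metric on $(X,\Delta)$ implies K-stability. Given such a metric, one tests any normal test configuration for $(X,\Delta)$: by the asymptotic slope formula of Berman \cite{B16}, the Ding invariant is bounded below by a multiple of the generalized Donaldson-Futaki invariant, so the mere existence of a KE metric forces K-semistability, while vanishing of the DF invariant identifies the test configuration as a product configuration induced by a holomorphic vector field. Under the discrete automorphism hypothesis no such vector field exists, so every nontrivial test configuration has strictly positive DF invariant, giving K-stability.

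For the converse I would chain together the two cited papers. From \cite{LXZ22}, K-stability of a log Fano pair is equivalent to the stability threshold satisfying $\delta(X,\Delta)>1$, and hence to uniform K-stability; the technical heart of their argument is a finite-generation theorem for valuations computing $\delta$, which allows one to upgrade divisorial test configurations to arbitrary linearly bounded filtrations. Then Corollary 1.2 of \cite{LTW22} promotes uniform K-stability of a log Fano pair to the existence of a (necessarily unique) K\"ahler-Einstein metric; this step passes through the non-Archimedean Mabuchi/Ding setup of \cite{BBJ21}, deduces $J$-properness of the twisted Mabuchi functional from $\delta>1$, and applies a Chen-Cheng style regularity theorem to upgrade a weak minimizer of the Ding functional to a genuine KE potential in the klt sense.

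The substantive difficulty is therefore not in the present assembly but inside the imported tools: the finite-generation theorem in \cite{LXZ22}, and the non-Archimedean pluripotential theory plus regularity package in \cite{LTW22}. The only remaining point to verify is the passage from the hypotheses actually supplied by \cite{LTW22} (reduced uniform K-stability) to K-stability as stated, which in the discrete-automorphism setting is trivial because the reductive part of $\mathrm{Aut}(X,\Delta)$ is then finite, so uniform K-stability, reduced uniform K-stability, K-polystability, and K-stability all coincide. Once these ingredients are in place, the desired equivalence is immediate.
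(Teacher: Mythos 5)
The paper offers no proof of this statement: it is quoted verbatim as a known result, with the attribution to \cite[Corollary 1.2]{LTW22} and \cite{LXZ22} (and \cite{B16} is the standard source for the forward direction). Your assembly — Berman's theorem plus the collapse of K-polystability to K-stability under discrete automorphisms for one implication, and the chain K-stable $\Rightarrow$ $\delta>1$ $\Rightarrow$ (reduced) uniform K-stability $\Rightarrow$ KE via \cite{LXZ22} and \cite{LTW22} for the other — is exactly how this equivalence is obtained from the cited literature, so it is consistent with the paper's treatment and correct.
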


In particular, let $X$ be a Fano manifold and $D$ be a smooth anticanonical divisor on $X$. Then
the K\"ahler-Einstein metric on the log Fano pair $(X,(1-\lambda)D)$ corresponds to a K\"ahler-Einstein metric with conical singularities of angle $2\pi\lambda$ along $D$.

K-stability of a log Fano pair can be effectively checked using the $\delta$-invariant, defined as follows.

Let $f\colon \hat{X} \to X$ be a birational morphism. A prime divisor $G$ on $\hat{X}$ is called a divisor over $X$ and is denoted by $G/X$. The image $f(G)\subset X$ is referred to as the center of $G$, denoted by $c_X(G)$. We also denote the log discrepancy of $(X,\Delta)$ along $G$ by $A_{X,\Delta}(G)$.

We define  a key invariant associated to $G$:
\begin{equation}\label{eq:S}
     S_{X,\Delta}(G) = \frac{1}{(-K_X - \Delta)^n} \int_0^{\tau_{X,\Delta}(G)} \mathrm{vol}\left(f^*(-K_X - \Delta) - tG\right) \, dt,
\end{equation}
where $\tau_{X,\Delta}(G)$ is the pseudoeffective threshold of $G$ with respect to $-(K_X + \Delta)$, defined by
\[
\tau_{X,\Delta}(G) := \sup \left\{ t \in \mathbb{Q}_{>0} \mid f^*(-K_X - \Delta) - tG \text{ is pseudoeffective} \right\}.
\]

\begin{definition}
The $\delta$-invariant of the pair $(X,\Delta)$ is given by
\[
\delta(X,\Delta) := \inf_{G/X} \frac{A_{X,\Delta}(G)}{S_{X,\Delta}(G)}.
\]
The local $\delta$-invariant at a point $p$ on $X$ is defined as
\[
\delta_p(X,\Delta) := \inf_{\substack{G/X\\ c_X(G)\ni p}} \frac{A_{X,\Delta}(G)}{S_{X,\Delta}(G)}.
\]
\end{definition}

It follows immediately from the definition that
\begin{equation} \label{local delta}
\delta(X,\Delta) = \inf_{p \in X} \delta_p(X,\Delta).
\end{equation}

In the case when $X$ is smooth, then the $\delta$-invariant relates to the greatest  Ricci lower bound by 
$
R(X) = \min\{\delta(X), 1\}$ (see \cite[Theorem 5.7]{CRYZ}).

As mentioned earlier,  the $\delta$-invariant serves as a criterion for K-stability:

\begin{theorem}[{\cite{Fuj19}, \cite{Li17}, \cite{BX19}}]
A log Fano pair $(X, \Delta)$ is K-stable (resp. K-semistable) if and only if $\delta(X,\Delta) > 1$ (resp. $\delta(X,\Delta) \geq 1$).
\end{theorem}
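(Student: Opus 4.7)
The plan is to deduce both equivalences from the Fujita--Li valuative criterion for K-(semi)stability, combined with a refinement due to Blum--Xu (and ultimately Liu--Xu--Zhuang) that is needed in the strict case. I would first introduce the $\beta$-invariant
\[
\beta_{X,\Delta}(G) := A_{X,\Delta}(G) - S_{X,\Delta}(G)
\]
for every prime divisor $G$ over $X$, and take as input the Fujita--Li theorem that $(X,\Delta)$ is K-semistable if and only if $\beta_{X,\Delta}(G) \geq 0$ for every such $G$. This input is proved by attaching to each divisorial valuation a test configuration whose Donaldson--Futaki invariant agrees, up to a positive normalization factor, with $\beta_{X,\Delta}(G)$, and conversely using the filtration and approximation techniques of Boucksom--Hisamoto--Jonsson to show that divisorial test configurations are enough to test K-semistability.

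Given this valuative criterion, the K-semistable half of the theorem is immediate from the definition: since
\[
\delta(X,\Delta) = \inf_{G/X} \frac{A_{X,\Delta}(G)}{S_{X,\Delta}(G)},
\]
the inequality $\delta(X,\Delta) \geq 1$ is equivalent to $A_{X,\Delta}(G) \geq S_{X,\Delta}(G)$, and hence to $\beta_{X,\Delta}(G) \geq 0$, for every prime divisor $G$ over $X$. So the K-semistable statement is obtained by pure translation once the valuative criterion is granted.

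For the K-stable statement, the direction $\delta(X,\Delta) > 1 \Rightarrow (X,\Delta)$ K-stable is a form of the uniform valuative criterion: a strict positive gap in $\delta$ yields a uniform lower bound on Donaldson--Futaki invariants in terms of a suitable non-Archimedean norm of the test configuration, which rules out any non-product test configuration with vanishing Donaldson--Futaki invariant. The converse is where the real work lies. The pointwise inequality $\beta(G) > 0$ for every $G$ does not by itself produce $\delta > 1$, because the infimum defining $\delta$ could equal $1$ without being attained on any single divisor. To close this gap I would invoke the Blum--Xu result that $\delta(X,\Delta)$ is computed by a quasi-monomial valuation which is an lc place of an $\mathbb{N}$-complement, together with the Liu--Xu--Zhuang theorem that K-stability is equivalent to reduced uniform K-stability in this setting. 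Combined, these force $\delta(X,\Delta) = 1$ to produce an honest divisorial destabilizer, incompatible with K-stability.

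The main obstacle is therefore the converse in the strict case: upgrading the pointwise positivity of $\beta$ to the uniform bound $\delta > 1$. This step is deep rather than formal, resting on the existence of optimal destabilizers, on the higher-rank finite generation results of Liu--Xu--Zhuang, and on the theory of $\mathbb{N}$-complements. Every other step in the argument is essentially a dictionary between the language of test configurations and the language of divisorial valuations on $X$.
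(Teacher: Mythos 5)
This statement is quoted in the paper as a known theorem, with citations to Fujita, Li, and Blum--Xu; the paper supplies no proof of its own, so there is nothing internal to compare your argument against. Your outline is nonetheless an accurate account of how the cited literature establishes the result. The semistable half is indeed a pure translation: $\delta(X,\Delta)\geq 1$ is by definition equivalent to $A_{X,\Delta}(G)\geq S_{X,\Delta}(G)$, i.e.\ $\beta_{X,\Delta}(G)\geq 0$, for every prime divisor $G$ over $X$, which is the Fujita--Li valuative criterion for K-semistability. You also correctly locate the genuine difficulty in the strict case: the valuative criterion gives K-stability equivalent to $\beta_{X,\Delta}(G)>0$ for every $G$, and this pointwise positivity does not formally yield $\delta>1$ since the infimum need not be attained; closing that gap requires the existence of (quasi-monomial, lc-place-of-complement) minimizers and the finite generation machinery, which is exactly what the Blum--Xu and Liu--Xu--Zhuang references supply. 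Two small imprecisions: the relevant Liu--Xu--Zhuang statement for this theorem is that K-stability coincides with \emph{uniform} K-stability (reduced uniform K-stability is the notion matched with K-polystability in the presence of a torus), and your sketch of the Fujita--Li input itself (test configurations from divisorial valuations, Boucksom--Hisamoto--Jonsson approximation) is a correct description of those proofs but is of course not carried out here. As a blind reconstruction of the provenance of a cited theorem, rather than a self-contained proof, your proposal is sound.
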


Combining the above results, we obtain the following characterization when $\mathrm{Aut}(X, D)$ is discrete:
\begin{equation}\label{R(X,D)}
R(X, D) = \sup\left\{ \lambda > 0 \ \middle| \ \delta\big(X, (1 - \lambda)D\big) > 1 \right\}.
\end{equation}

One of the key issues we need to address toward proving the Main Theorem is how to estimate or evaluate the $\delta$-invariant. To explain our method, we restrict our attention to the case of surfaces.

Let $(S, \Delta)$ be a two-dimensional log Fano pair, i.e., a log del Pezzo surface. Consider a birational morphism $f: \hat{S} \rightarrow S$ and let $G$ be a prime divisor on $\hat{S}$. Suppose that $f$ is a plt blowup associated to $G$, i.e.,
\begin{itemize}
    \item $-G$ is $f$-ample;
    \item the pair $(\hat{S}, \hat{\Delta} + G)$ is plt, where $\hat{\Delta}$ denotes the strict transform of $\Delta$.
\end{itemize}

For a real number $t \in (0, \tau_{S,\Delta}(G))$, consider the Zariski decomposition
\[
    -f^*(K_S + \Delta) - tG \equiv P(t) + N(t),
\]
where $P(t)$ and $N(t)$ are the positive and the negative parts, respectively. Let $q$ be a point on $G$. Define
\begin{equation}\label{eq:SS}
    S(W^G_{\bullet,\bullet}; q) := \frac{2}{(-K_S - \Delta)^2} \int_0^{\tau_{S,\Delta}(G)} (P(t) \cdot G) \cdot \mathrm{ord}_q(N(t)|_G) + \frac{1}{2}(P(t) \cdot G)^2 \, dt.
\end{equation}

We recall the following adjunction formula
\[
    (K_{\hat{S}} + \hat{\Delta} + G)|_G = K_G + \Delta_G,
\]
where $\Delta_G$ is the different of the pair $(\hat{S}, \hat{\Delta} + G)$. If $q$ is a quotient singularity of type $\frac{1}{n}(a, b)$, then
\[
    A_{G, \Delta_G}(q) = \frac{1}{n} - (\hat{\Delta} \cdot G)_q.
\]

\begin{theorem}[{\cite[Theorem 4.8 (2) and Corollary 4.9]{Fuj23}}]\label{AZ}
    Suppose that $\hat{S}$ is a Mori dream space. Then the local $\delta$-invariant of $(S, \Delta)$ at a point $p$ in $c_S(G)$ satisfies
    \[
        \delta_p(S, \Delta) \geq \min \left\{ \frac{A_{S, \Delta}(G)}{S_{S, \Delta}(G)}, \inf_{q \in f^{-1}(p)} \left\{ \frac{A_{G, \Delta_G}(q)}{S(W^G_{\bullet,\bullet}; q)} \right\} \right\}.
    \]
    \end{theorem}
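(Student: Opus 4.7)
The plan is to adapt the Abban--Zhuang adjunction framework to this two-dimensional setting, using the flag $S\supset G\supset\{q\}$. I would fix an arbitrary prime divisor $F$ over $S$ with $c_S(F)\ni p$ and aim to lower-bound $A_{S,\Delta}(F)/S_{S,\Delta}(F)$ by one of the two ratios in the claimed minimum. After passing to a common model resolving both $G$ and $F$, I would split into the mutually exclusive cases $\mathrm{ord}_F=\mathrm{ord}_G$ (as valuations up to positive scalar) and $\mathrm{ord}_F\neq\mathrm{ord}_G$. In the first case the ratio is literally $A_{S,\Delta}(G)/S_{S,\Delta}(G)$, reproducing the first term. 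In the second case $\mathrm{ord}_F$ restricts to a nonzero divisorial valuation $v_G$ on the curve $G$, centred at some closed point $q\in G\cap f^{-1}(p)$, and the goal becomes to bound the ratio by $A_{G,\Delta_G}(q)/S(W^G_{\bullet,\bullet};q)$.

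For the log discrepancy in the second case I would invoke adjunction. Since $(\hat S,\hat\Delta+G)$ is plt and $-G$ is $f$-ample, the formula $(K_{\hat S}+\hat\Delta+G)|_G=K_G+\Delta_G$ combined with inversion of adjunction yields
\[
    A_{S,\Delta}(F)=A_{\hat S,\hat\Delta}(F)\ \ge\ A_{G,\Delta_G}(v_G)\ \ge\ A_{G,\Delta_G}(q)\cdot c_F,
\]
where $c_F$ denotes the multiplicity of $v_G$ at $q$ relative to a local uniformiser.

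The technical core is bounding $S_{S,\Delta}(F)$ via the Newton--Okounkov body of $f^{*}(-K_S-\Delta)$ refined along the flag $(G,q)$. The Mori dream space hypothesis ensures that the Zariski decomposition $-f^*(K_S+\Delta)-tG=P(t)+N(t)$ is piecewise polynomial in $t$, so the Okounkov body fibres polyhedrally over $t\in[0,\tau_{S,\Delta}(G)]$ with fibre equal to the one-dimensional Okounkov body of $P(t)$ on $G$ at $q$. Writing $\mathrm{vol}\bigl(f^*(-K_S-\Delta)-sF\bigr)$ through this fibration and Fubini-integrating the defining formula \eqref{eq:S}, the expression splits into the two pieces appearing in \eqref{eq:SS}: the term $(P(t)\cdot G)\cdot\mathrm{ord}_q(N(t)|_G)$ collects the negative-part contribution, while $\tfrac12(P(t)\cdot G)^2$ accounts for the linear slice of $P(t)$ on $G$ integrated against $v_G$. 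This yields $S_{S,\Delta}(F)\le c_F\cdot S(W^G_{\bullet,\bullet};q)$, which combined with the adjunction bound produces the second term of the minimum.

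The main obstacle will be this last decomposition step: faithfully matching the Okounkov body fibration to \eqref{eq:SS} requires a careful accounting of the scaling in $s=\mathrm{ord}_F$, of the restriction of $N(t)$ to $G$, and of the contribution of the positive part $P(t)$ to the Okounkov body on $G$. Log-concavity of the volume function is the main technical tool that underwrites the passage from an arbitrary $F$ to the two universal flag terms. Taking the infimum over $F$ with $c_S(F)\ni p$ and $q\in f^{-1}(p)\cap G$ then produces the claimed lower bound on $\delta_p(S,\Delta)$.
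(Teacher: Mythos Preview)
The paper does not prove this theorem at all: it is quoted verbatim from \cite[Theorem~4.8~(2) and Corollary~4.9]{Fuj23} and used as a black-box tool throughout Section~4. There is therefore no ``paper's own proof'' to compare your proposal against.

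That said, your sketch is a faithful outline of the Abban--Zhuang method that Fujita's cited result packages in the surface case. The case split on whether $\mathrm{ord}_F$ agrees with $\mathrm{ord}_G$, the adjunction bound on log discrepancies, and the decomposition of $S_{S,\Delta}(F)$ via the refined Okounkov body along the flag $G\supset\{q\}$ are exactly the ingredients of \cite{Fuj23}. One point to tighten: the inequality $S_{S,\Delta}(F)\le c_F\cdot S(W^G_{\bullet,\bullet};q)$ is not a direct Fubini identity but rather follows from the concavity of the Okounkov body together with the fact that the barycentre of the fibre over $t$ is bounded by $\mathrm{ord}_q(N(t)|_G)+\tfrac12(P(t)\cdot G)$; the Mori dream space hypothesis is what guarantees finite generation so that the filtered linear series $W^G_{\bullet,\bullet}$ has the expected positive-volume and barycentre behaviour. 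If you were to write this out in full you would essentially be reproducing Fujita's argument, which in turn specialises the general machinery of Abban--Zhuang to dimension two.
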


To apply Theorem~\ref{AZ}, it is necessary to verify whether the surface $\hat{S}$ is a Mori dream space. For example, weak del Pezzo surfaces are Mori dream spaces because they are of Fano type (\cite[Corollary 1.3.2]{BCHM}). In particular, if $G$ is a prime divisor on $S$ itself (i.e., $f = \mathrm{id}_S$) and the pair $(S, \Delta + G)$ is plt, then
\[
    \delta_p(S, \Delta) \geq \min \left\{ \frac{1}{S_{S, \Delta}(G)}, \frac{A_{G, \Delta_G}(p)}{S(W^G_{\bullet,\bullet}; p)} \right\}.
\]

Now, consider a weak del Pezzo surface $T$ containing $(-2)$-curves $A_1, \ldots, A_N$ and $(-1)$-curves $E_1, \ldots, E_m$. By \cite[Corollary 3.3.(2)]{Nik96}, the Mori cone $\overline{NE}(T)$ is generated by the classes $[A_i]$ and $[E_j]$. Assume that the dual graph of $\cup_{i=1}^nA_i$ is a Dynkin diagram of type $\mathrm{A}_n$ for some $1\leq n\leq N$. There then exists a contraction 
$\tau:T\rightarrow \hat{T}$ such that $$\tau(\cup_{i=1}^nA_i)=p\in \hat{T}$$
where $\hat{T}$ is a normal $\mathbb{Q}$-factorial projective surface, and $p$ is an $\mathrm{A}_n$ singularity. Since $\tau_*[A_i] = 0 \in \overline{NE}(\hat{T})$ for $i = 1, \ldots, n$, the following proposition follows directly from \cite[Theorem 1.1]{Oka16}:

\begin{proposition}\label{MDS}
    In the above setting, the surface $\hat{T}$ is a Mori dream space. Moreover, the Mori cone $\overline{NE}(\hat{T})$ is spanned by the extremal classes $[\tau(A_{n+1})], \ldots, [\tau(A_N)]$, $[\tau(E_1)], \ldots, [\tau(E_M)]$.
\end{proposition}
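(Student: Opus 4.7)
The plan is to deduce the proposition in two short steps: first the Mori dream space property of $\hat{T}$ via \cite[Theorem 1.1]{Oka16}, then the Mori cone description via a direct pushforward along $\tau$.

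For the first step, $T$ is of Fano type, since it is a weak del Pezzo surface, so by \cite[Corollary 1.3.2]{BCHM} it is itself a Mori dream space. The morphism $\tau : T \to \hat{T}$ is a surjective morphism of $\mathbb{Q}$-factorial normal projective surfaces (the $\mathbb{Q}$-factoriality and normality of $\hat{T}$ are part of the hypothesis), so Okawa's theorem applies verbatim and yields that $\hat{T}$ is a Mori dream space.

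For the Mori cone description, I would push forward along $\tau_* : N_1(T)_{\mathbb{R}} \to N_1(\hat{T})_{\mathbb{R}}$. Since $\tau$ is a proper birational morphism onto a normal surface, every irreducible curve on $\hat{T}$ is the image (with multiplicity one) of its strict transform on $T$; combined with the fact that the pushforward of an effective $1$-cycle is effective, this gives $\tau_*\overline{NE}(T) = \overline{NE}(\hat{T})$, the equality being of closed cones because $\overline{NE}(T)$ is polyhedral (as $T$ is a Mori dream space) and the linear image of a polyhedral cone is polyhedral. By the Nikulin-type statement quoted just before the proposition, $\overline{NE}(T)$ is spanned by $[A_1],\ldots,[A_N],[E_1],\ldots,[E_m]$. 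The classes $[A_1],\ldots,[A_n]$ lie in $\ker\tau_*$, since these are exactly the curves contracted to the point $p$, whereas the remaining generators push forward to the classes listed in the proposition.

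The main (mild) obstacle I expect is checking that these pushed-forward classes are genuinely extremal and not merely a spanning set. This should follow from the observation that $\ker\tau_*$ coincides with the $n$-dimensional subspace spanned by $[A_1],\ldots,[A_n]$, because the intersection matrix of the $A_n$-chain $A_1+\cdots+A_n$ is negative definite. Hence the surviving generators of $\overline{NE}(T)$ descend to linearly independent rays in $N_1(\hat{T})_{\mathbb{R}}$, each of which continues to span an extremal ray of $\overline{NE}(\hat{T})$.
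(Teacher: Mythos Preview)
Your first two paragraphs are correct and constitute a complete proof of what the paper actually needs: the Mori dream space property via Okawa, and the cone-spanning description via $\tau_*\overline{NE}(T)=\overline{NE}(\hat{T})$ together with Nikulin's generators and $\tau_*[A_i]=0$ for $i\le n$. This is exactly the paper's approach, which simply cites \cite[Theorem~1.1]{Oka16} after noting that the contracted classes push forward to zero.

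Your third paragraph, however, has a genuine gap. You claim that since $\ker\tau_*$ is spanned by $[A_1],\ldots,[A_n]$, the remaining generators descend to \emph{linearly independent} classes in $N_1(\hat{T})_{\mathbb{R}}$. This would require the full collection $[A_1],\ldots,[A_N],[E_1],\ldots,[E_m]$ to be linearly independent in $N_1(T)_{\mathbb{R}}$, which is false for weak del Pezzo surfaces in general: a degree-$d$ weak del Pezzo has Picard rank $10-d$, yet the number of negative curves can vastly exceed this (e.g.\ $240$ $(-1)$-curves in degree~$1$). So simpliciality of the image cone, and hence extremality of each listed ray, does not follow from your argument. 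Fortunately the paper only ever uses the spanning statement (to read off Zariski decompositions by checking nefness against the listed classes), so this defect in the third paragraph does not affect anything downstream; you should simply drop the extremality claim or note that it is not needed.
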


\section{K-unstable del Pezzo surfaces}

The surfaces $S_1$ and $S_2$ are the only smooth del Pezzo surfaces that are K-unstable; all other smooth del Pezzo surfaces are either K-polystable or K-stable.

To apply Theorem~\ref{Kst and delta}, it is essential to verify that the relevant automorphism groups are finite. Although the surfaces $S_1$ and $S_2$ themselves have infinite automorphism groups, the automorphism groups of the pairs $(S_1, C^1)$ and $(S_2, C^2)$ are finite for smooth anticanonical divisors $C^1$ and $C^2$. This follows from the natural inclusions
$$
\mathrm{Aut}(S_i, C^i) \hookrightarrow \mathrm{Aut}(\mathbb{P}^2, \phi_i(C^i))
$$
for each $i$, where $\phi_i(C^i)$ is a smooth plane cubic curve. Since the group $\mathrm{Aut}(\mathbb{P}^2, \phi_i(C^i))$ is finite (see, for example, \cite[Th\'eor\`eme~3.1]{B13}), it follows that $\mathrm{Aut}(S_i, C^i)$ is finite as well.

To prove the Main Theorem, we invoke \eqref{R(X,D)}.  It allows us to deduce the Main Theorem directly from the following two theorems by identifying a value of $\lambda_0$ such that  
\[
\delta(S_i, (1 - \lambda_0)C^i) = 1.
\]  
It is worth noting that the pair $(S_i, (1 - \lambda_0)C^i)$ is strictly K-semistable, owing to the finiteness of the automorphism group $\mathrm{Aut}(S_i, C^i)$.

\begin{theorem}\label{thm1} Let $C^1$ be a smooth anticanonical divisor on $S_1$.
\begin{enumerate}
   \item  If $C^1$ is tangent to the $0$-curve at the intersection point of $E$ and $C^1$, then 
    \[
        \delta(S_1,(1-\lambda)C^1)
        \left\{\aligned
            &=\frac{6}{7\lambda}&\text{ for } \frac{13}{14}\leq\lambda\leq1,  \\[.5em]
            &=\frac{3+6\lambda}{10\lambda}&\text{ for } \frac{5}{22}\leq\lambda\leq\frac{13}{14},  \\[.5em]
              &\geq \frac{48}{25} &\text{ for }  0<\lambda\leq\frac{5}{22}.
        \endaligned\right.
    \]
    \item Otherwise, 
    \[
        \delta(S_1,(1-\lambda)C^1)
       \left\{\aligned
            &=\frac{6}{7\lambda}&\text{ for } \frac{13}{14}\leq\lambda\leq1,     \\[.5em]
          & = \frac{4+4\lambda}{9\lambda}&\text{ for } \frac{1}{2}\leq\lambda\leq\frac{13}{14}  \\[.5em]
            &  \geq \frac{4}{3}&\text{ for } 0<\lambda\leq\frac{1}{2}.
        \endaligned\right.
    \]
    \end{enumerate}
\end{theorem}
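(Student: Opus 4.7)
My plan is to establish matching upper and lower bounds on $\delta(S_1,(1-\lambda)C^1)$ in each subinterval of $\lambda$, using explicit prime divisors for the upper bounds and Theorem~\ref{AZ} for the lower bounds.

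For the upper bounds I would exhibit three divisors realizing the stated formulas. The divisor $E\subset S_1$, paired with the Zariski decomposition of $\lambda(-K_{S_1})-tE$ on $S_1$, yields $A_{S_1,(1-\lambda)C^1}(E)=1$ and $S_{S_1,(1-\lambda)C^1}(E)=7\lambda/6$, hence the ratio $6/(7\lambda)$. For case (2) I would use the exceptional divisor $F$ of the blowup $\sigma\colon \hat{S}_1\to S_1$ at the point $p=E\cap C^1$: the surface $\hat{S}_1$ is a weak del Pezzo of degree $7$ containing a single $(-2)$-curve $\hat{E}$ and a $(-1)$-curve $\hat{\ell}_p$, the proper transform of the $0$-curve through $p$, hence a Mori dream space by Proposition~\ref{MDS}. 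Tracking when $\hat{E}$ and $\hat{\ell}_p$ successively enter the negative part of $\lambda\sigma^*(-K_{S_1})-tF$ then gives $A(F)=1+\lambda$, $S(F)=9\lambda/4$, and the ratio $(4+4\lambda)/(9\lambda)$. For case (1), let $\mu\colon \tilde{S}_1\to \hat{S}_1$ be the blowup at the common point $q\in F$ of $\hat{C}^1$ and $\hat{\ell}_p$ (a single point by the tangency assumption); the exceptional divisor $G$ satisfies $A(G)=1+2\lambda$, and an analogous computation on $\tilde{S}_1$, whose negative curves form a chain $\tilde{E}$--$\tilde{F}$--$G$--$\tilde{\ell}_p$ of self-intersections $-2,-2,-1,-2$, yields $S(G)=10\lambda/3$ and the ratio $(3+6\lambda)/(10\lambda)$.

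For the matching lower bounds I would apply \eqref{local delta} to reduce to bounding $\delta_{p'}(S_1,(1-\lambda)C^1)$ at each point $p'\in S_1$ via Theorem~\ref{AZ}. For $p'$ away from $E\cup C^1$, a direct application with the blowup at $p'$ produces a bound far exceeding the target. For $p'\in E\setminus C^1$ or $p'\in C^1\setminus E$, Theorem~\ref{AZ} applied with $G=E$ or $G=C^1$ together with a short flag computation again suffices. The essential case is $p'=p$, where one applies Theorem~\ref{AZ} with $G=F$ in case (2), or with $G$ the divisor of the second blowup in case (1); the global term $A(G)/S(G)$ equals the claimed formula by the computations above, so the proof reduces to showing that the flag contribution $\inf_{q\in f^{-1}(p)} A_{G,\Delta_G}(q)/S(W^G_{\bullet,\bullet};q)$ also exceeds it on each $\lambda$-interval.

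The main difficulty I anticipate is precisely this flag-term estimate at the critical point. The different $\Delta_G$ contributes $(1-\lambda)$ at the intersection of the exceptional curve with $\hat{C}^1$, dropping $A_{G,\Delta_G}$ to $\lambda$ there; the quantity $S(W^G_{\bullet,\bullet};q)$ must then be evaluated as a piecewise-polynomial integral along the exceptional curve whose breakpoints mirror the Zariski chamber walls in the computation of $S(G)$. The threshold $\lambda=13/14$ arises from equating $A(E)/S(E)$ to the middle formula, while $\lambda=5/22$ and $\lambda=1/2$ arise from equating the middle formula to the flag bound at $q$; verifying that the flag bound indeed matches or exceeds the formula on each subinterval, together with separately identifying divisors responsible for the non-sharp bounds $48/25$ and $4/3$ in the small-$\lambda$ regime, is where the calculation interlocks most tightly.
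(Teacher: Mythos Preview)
Your overall architecture matches the paper's: explicit divisors for upper bounds, and Theorem~\ref{AZ} at each point for lower bounds. Your identification of the divisors giving $6/(7\lambda)$, $(4+4\lambda)/(9\lambda)$, and $(3+6\lambda)/(10\lambda)$ is correct, and your two-step blowup for case~(1) is equivalent to the paper's $(1,2)$-weighted blowup with respect to the $0$-curve (the paper additionally contracts the first exceptional curve, but that does not change $S(G)$).

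The genuine gap is your treatment of points $p'\in C^1\setminus E$. Taking $G=C^1$ in Theorem~\ref{AZ} is not permitted: the hypothesis requires $(S_1,(1-\lambda)C^1+G)$ to be plt, and with $G=C^1$ the coefficient becomes $2-\lambda>1$, so the pair is not even log canonical. Even if one formally ran the computation, the flag bound on $C^1$ would be of order $3/(8\lambda)$, far below the target $(4+4\lambda)/(9\lambda)$ on the range $\tfrac12\le\lambda\le\tfrac{13}{14}$. The paper in fact devotes three separate lemmas to these points, each using a weighted blowup adapted to the local geometry: a $(1,2)$-blowup along the tangent line at non-inflection points, a $(1,3)$-blowup at inflection points of $\phi_1(C^1)$, and a $(1,2)$-blowup along the $0$-curve where $C^1$ is tangent to it. These yield the bounds $(4+8\lambda)/(11\lambda)$, $(12+36\lambda)/(43\lambda)$, and $(1+2\lambda)/(3\lambda)$, all of which must be shown to dominate (or match) the critical-point bound on the relevant $\lambda$-ranges. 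In case~(2) they actually determine $\delta$ for $\lambda<\tfrac12$, which is why the paper's lower bound $4/3$ for small $\lambda$ arises as the common value at $\lambda=\tfrac12$ rather than from the critical point alone. Your proposal needs this analysis; the ``short flag computation'' with $G=C^1$ cannot replace it.
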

Note that $C^1$ is tangent to the 0-curve at the intersection point of $E$ and $C^1$ if and only if~$x$ is an inflection point of the smooth plane cubic curve $\phi_1(C^1)$.

\begin{theorem}\label{thm2}
Let $C^2$ be a smooth anticanonical divisor on $S_2$.
\begin{enumerate}
\item If $C^2$ passes through the intersection of two $(-1)$-curves, then
    \[
        \delta(S_2,(1-\lambda)C^2)
        \left\{\aligned
           &= \frac{21}{25\lambda}  &\text{ for } \frac{23}{25}\leq\lambda\leq 1,  \\[.5em]
           &= \frac{7+7\lambda}{16\lambda} &\text{ for } \frac{13}{35}\leq\lambda\leq\frac{23}{25},  \\[.5em]
            &\geq \frac{42}{23} &\text{ for } 0<\lambda\leq\frac{23}{73}.
         \endaligned\right.
    \]

  \item Otherwise, then
         \[
        \delta(S_2,(1-\lambda)C^2)
      \left\{\aligned
         &  = \frac{21}{25\lambda} &\text{ for } \frac{18}{25}\leq\lambda\leq 1,  \\[.5em]
         &   \geq \frac{7}{6} &\text{ for } 0<\lambda\leq\frac{18}{25}.
         \endaligned\right.
    \]
    \end{enumerate}
\end{theorem}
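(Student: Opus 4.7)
The plan is to establish the claimed equalities and inequalities on $\delta(S_2,(1-\lambda)C^2)$ by combining (i) explicit upper bounds coming from well-chosen prime divisors (or divisors over $S_2$) via their $A/S$ ratios, and (ii) matching lower bounds obtained from Theorem~\ref{AZ} applied point by point, in the spirit of \cite{Den25}. Throughout, $-K_{S_2}-(1-\lambda)C^2\equiv-\lambda K_{S_2}$, so $(-K_{S_2}-(1-\lambda)C^2)^2=7\lambda^2$, and the pseudoeffective/nef analysis on $S_2$ is governed by its three $(-1)$-curves $A^1,A^2,L_{12}$.

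For the upper bounds, I would first use the $(-1)$-curve $L_{12}$: the Zariski decomposition of $-\lambda K_{S_2}-tL_{12}$ is nef for $0\le t\le\lambda$, and for $t\in[\lambda,3\lambda]$ has $N(t)=(t-\lambda)(A^1+A^2)$ with $P(t)=(3\lambda-t)H$, giving $\tau(L_{12})=3\lambda$, $S(L_{12})=25\lambda/21$, and $A(L_{12})=1$, hence $\delta\le 21/(25\lambda)$. In Case (1), I would additionally take the plt blowup $\pi\colon\hat S\to S_2$ at $p=A^1\cap L_{12}$ with exceptional divisor $E$. The surface $\hat S$ is a weak del Pezzo, hence a Mori dream space by Proposition~\ref{MDS}, with Mori cone generated by the $(-2)$-curves $\tilde A^1,\tilde L_{12}$ and the $(-1)$-curves $E,\tilde A^2$. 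A three-phase Zariski decomposition of $\pi^*(-\lambda K_{S_2})-tE$ (nef for $0\le t\le\lambda$; then $\tilde A^1$ and $\tilde L_{12}$ enter the negative part together with equal coefficients $(t-\lambda)/2$; then $\tilde A^2$ enters at $t=3\lambda$, with $\tau(E)=5\lambda$) yields $S(E)=16\lambda/7$, while $A(E)=2-(1-\lambda)=1+\lambda$ since $C^2$ passes smoothly through $p$; this gives $\delta\le 7(1+\lambda)/(16\lambda)$.

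For the lower bounds I would apply Theorem~\ref{AZ} at each point $q\in S_2$, choosing the plt divisor adapted to the local geometry. On the generic locus of $S_2$, Theorem~\ref{AZ} with $G$ a $(-1)$-curve through $q$ already suffices. On $L_{12}$, formula \eqref{eq:SS} with $G=L_{12}$ gives $S(W^{L_{12}}_{\bullet,\bullet};q)\in\{5\lambda/7,\,23\lambda/21\}$ according as $q\notin\{A^1\cap L_{12},A^2\cap L_{12}\}$ or $q\in\{A^1\cap L_{12},A^2\cap L_{12}\}$, and combined with the different contribution of $(1-\lambda)$ at $C^2\cap L_{12}$ this yields the desired inequality in Case (2) and at all non-critical points in Case (1); a parallel analysis with $G=A^1$ or $G=A^2$ handles points lying on $A^i$. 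In Case (1), the only point where the above is too weak is the special point $p=A^1\cap L_{12}$, and there one uses $G=E$ on $\hat S$ instead: the inner infimum in Theorem~\ref{AZ} is computed at the three distinguished points $e_A=\tilde A^1\cap E$, $e_L=\tilde L_{12}\cap E$, and $e_C=\tilde C^2\cap E$, with $A_{E,\Delta_E}(e_C)=\lambda$ and the other two equal to $1$; evaluating $S(W^E_{\bullet,\bullet};q)$ from the three-phase decomposition shows each of the three ratios exceeds $7(1+\lambda)/(16\lambda)$ in the stated range, giving $\delta_p\ge 7(1+\lambda)/(16\lambda)$.

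The main obstacle is the bookkeeping for the multi-phase Zariski decomposition of $\pi^*(-\lambda K_{S_2})-tE$ on $\hat S$ and the corresponding restrictions $N(t)|_E$ that feed into the three evaluations $S(W^E_{\bullet,\bullet};e_A),\,S(W^E_{\bullet,\bullet};e_L),\,S(W^E_{\bullet,\bullet};e_C)$; the phase-three linear system for the coefficients of $N(t)$ uses the full chain structure $\tilde A^1\text{--}E\text{--}\tilde L_{12}\text{--}\tilde A^2$ on $\hat S$ and must be solved exactly before any integrals can be evaluated. Once these ingredients are in place, the monotonicity in $\lambda$ of the competing expressions $21/(25\lambda),\,21/(23\lambda),\,7(1+\lambda)/(16\lambda)$ and their analogues for $G=A^1,A^2$ immediately produces the transition values $\lambda=23/25$ and $\lambda=13/35$ in Case (1) and $\lambda=18/25$ in Case (2), while the weaker small-$\lambda$ lower bounds $42/23$ and $7/6$ follow by specializing the same machinery at the endpoints $\lambda=23/73$ and $\lambda=18/25$ respectively.
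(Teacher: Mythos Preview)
Your overall strategy matches the paper's: upper bounds from $A/S$ of well-chosen divisors, lower bounds from Theorem~\ref{AZ} applied pointwise. The computation at the special point $p=A^1\cap L_{12}$ via the ordinary blowup is exactly the paper's Lemma~\ref{2_CAB}, and your claimed values $S(\hat M)=16\lambda/7$, $A(\hat M)=1+\lambda$ are correct. However, there is a genuine gap in your treatment of points $q\in C^2$ that lie \emph{off} the three $(-1)$-curves. Your sentence ``on the generic locus of $S_2$, Theorem~\ref{AZ} with $G$ a $(-1)$-curve through $q$ already suffices'' cannot be right: $S_2$ has only the three $(-1)$-curves $A^1,A^2,L_{12}$, so a generic point of $C^2$ lies on none of them. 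If instead you take $G$ to be a curve on $S_2$ through $q$ (say the member of $|A^1+A^2+L_{12}|$ through $q$, or one of the $0$-curves $N^i$), then the different picks up a $(1-\lambda)$ contribution from $C^2$, so $A_{G,\Delta_G}(q)=\lambda$, while one computes $S(W^{G}_{\bullet,\bullet};q)=23\lambda/21$; the resulting ratio $21/23<1$ is useless as a lower bound.

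The paper resolves this by taking, at each $q\in C^2\setminus(A^1\cup A^2\cup L_{12})$, a weighted $(1,2)$- or $(1,3)$-blowup of $S_2$ at $q$, with weights aligned to a curve \emph{tangent} to $C^2$ at $q$ (the strict transform of the tangent line, or $N^i$ when $N^i$ is tangent there). This pushes $A$ up to $1+2\lambda$ or $1+3\lambda$ and makes the restricted ratio at $q_{C^2}$ equal to a constant independent of $\lambda$ (e.g.\ $42/23$, $63/23$, $7/3$), which is what produces the small-$\lambda$ lower bounds $42/23$ and (a fortiori) $7/6$. Which blowup to use depends on whether $\phi_2(q)$ is an inflection point of $\phi_2(C^2)$ and on whether $N^1$ or $N^2$ is tangent to $C^2$ at $q$; this case analysis (Lemmas~\ref{2_C-AB_notflex}--\ref{2_CB-A_flex} in the paper) is not optional and is precisely what your proposal is missing. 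A similar refinement is needed at the points $C^2\cap A^i$ and $C^2\cap L_{12}$ away from the triple intersections (Lemmas~\ref{2_CA-B_notflex}--\ref{2_CB-A_flex}); using $G=A^i$ or $G=L_{12}$ alone again gives only $21/19$ or $7/5$ at best, which is not enough to match the claimed equalities over the full ranges.
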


We remark here that $C^2$ passes through the intersection of two $(-1)$-curves if and only if the line determined by $x_1$ and $x_2$ is tangent to the smooth plane cubic curve $\phi_2(C^2)$ at either $x_1$ or~$x_2$.

The proofs of these theorems will be presented in the following section. 

\section{Proofs}
To prove Theorems~\ref{thm1} and~\ref{thm2}, we compute or estimate the local $\delta$-invariants of the pairs $(S_i, (1-\lambda)C^i)$ at every point in $S_i$, for $i = 1, 2$. Throughout this section, given a pseudoeffective divisor $D(t)$ depending on a variable $t$, we always denote its Zariski decomposition by $$D(t) \equiv P(t) + N(t),$$ where $P(t)$ is the positive part and $N(t)$ is the negative part. 
In addition, given a divisor~$G$ over $S_i$ and a point $q$ on $G$, we consistently denote the integrand in \eqref{eq:SS} by
\[
h(G, q, t) := (P(t) \cdot G) \cdot \mathrm{ord}_q\left(N(t)\big|_G\right) + \frac{1}{2} (P(t) \cdot G)^2.
\]

To apply Theorem~\ref{AZ}, we will frequently use the notion of the weighted $(1,m)$-blowup, which we now construct.

Let $p$ be a point in $S_i$ and $D$ be a smooth curve passing through $p$. Denote by $\pi^i_1:T^i_1\rightarrow S_i$ the blowup at $p$. For $2\leq j\leq m$, define $\pi^i_j:T^i_j\rightarrow T^i_{j-1}$ inductively to be the blowup at the intersection point of the strict transform of $D$ in $T^i_{j-1}$ and the exceptional curve of~$\pi^i_{j-1}$.
Next, let $\tau_i: T^i_m \to \hat{S}_i$ be the birational morphism obtained by contracting the exceptional curves of $\pi^i_1, \ldots, \pi^i_{m-1}$. Observe that the dual graph of these curves is the Dynkin diagram of type $\mathrm{A}_{m-1}$. The image of the exceptional divisor of $\pi^i_m$ under $\tau_i$ is denoted by $\hat{G}$ (resp.~$\hat{M}$) when $i = 1$ (resp. $i = 2$). If $C^1$ (resp. $C^2$) intersects $D$ at $p$ with multiplicity~$m$, the contraction of $\hat{G}$ (resp. $\hat{M}$) defines a plt blowup $\sigma_1: \hat{S}_1 \to (S_1,(1-\lambda)C^1)$ (resp. $\sigma_2: \hat{S}_2 \to (S_2,(1-\lambda)C^2)$). We simply call $\sigma_i$  the $(1,m)$-blowup at $p$ with respect to the curve $D$. In fact, if we choose local analytic coordinates $x,y$ near $p$ such that~$D$ is given locally by the zero set of $y$, then the above blowup agrees with the  weighted~$(1,m)$-blowup.

We adopt the following notation for curves:
\begin{itemize}
    \item A curve on $S_i$ is denoted by an uppercase Roman letter, possibly with a numeric superscript (e.g., $D$, $C^1$).
    \item A curve on $T^i_j$ is denoted by an uppercase Roman letter with subscript $j$, possibly with a numeric superscript (e.g., $D_j$, $C_j^1$).
    \item  The exceptional curve of $\pi^1_j$ (resp. $\pi^2_j$) on $T^1_j$ (resp. $T^2_j$) is denoted by $G^j_j$ (resp. $M^j_j$).
    \item If a curve on $T^i_j$ is the strict transform of a curve on $T^i_{j-1}$ (or $S_i$) via $\pi_j^i$, it is denoted by the same Roman letter and superscript, with the subscript updated to~$j$.
    \item The strict transform of a curve on $S_i$ via $\sigma_i$ is denoted by the same Roman letter and superscript, with a hat (e.g., $\hat{D}$, $\hat{C}^1$).
    \item The point of intersection between $\hat{G}$ (or $\hat{M}$) and the strict transform of a curve from $S_i$ under $\sigma_i$ is denoted by $q$, with the same Roman letter and superscript of the intersecting curve as a subscript (e.g., $q_D$, $q_{C^1}$).
\end{itemize}

\subsection{Proof of Theorem~\ref{thm1}} 
Let $p$ be a point on $S_1$.
There is a unique $0$-curve passing through the point $p$. In fact, it is the member of the pencil $|\phi_1^\ast\mathcal{O}_{\mathbb{P}^2}(1)-E|$. We will denote this $0$-curve by $F$ throughout this subsection.
Then we have the numerical equivalence
\[
-K_{S_1} - (1 - \lambda) C^1 - tF \equiv 2\lambda E + (3\lambda - t) F.
\]
This divisor is pseudoeffective only when $t \leq 3\lambda$. Its Zariski decomposition is given by
      \begin{align*}
        P(t)
        =\begin{cases}
            2\lambda E+(3\lambda-t)F\\
            (3\lambda-t)(E+F)
        \end{cases};  \ 
         N(t)=\begin{cases}
            0, &\ \ \ 0\leq t\leq \lambda,\\
            (u-\lambda)E,& \ \ \ \lambda\leq t\leq3\lambda.
        \end{cases}
    \end{align*}
    Then, 
    \begin{align*}
        \mathrm{vol}\left(-K_{S_1}-(1-\lambda)C^1-tF\right)=P(t)^2=
        \begin{cases}
            8\lambda^2-4\lambda t,&0\leq t\leq\lambda,\\
            (3\lambda-t)^2,&\lambda\leq t\leq3\lambda,
        \end{cases}
    \end{align*}
    and hence
    \[
        S_{S_1,(1-\lambda)C^1}(F)=\frac{13}{12}\lambda.
    \]

    We then obtain an upper bound
\begin{equation}\label{1_F-E_ub}
    \delta_p(S_1, (1 - \lambda)C^1) \leq \frac{A_{S_1, (1 - \lambda)C^1}(F)}{S_{S_1, (1 - \lambda)C^1}(F)} = \frac{12}{13\lambda}.
\end{equation}

We now consider the exceptional divisor $E$ on $S_1$. Then
\[
    -K_{S_1} - (1 - \lambda)C^1 - tE \equiv (2\lambda - t)E + 3\lambda F.
\]
The divisor is nef and big for $0 \leq t \leq 2\lambda$, and not pseudoeffective for $t > 2\lambda$. Then we compute
\[
    \mathrm{vol}\left(-K_{S_1} - (1 - \lambda)C^1 - tE\right) = -t^2 - 2\lambda t + 8\lambda^2,
\]
and hence
\[
    S_{S_1, (1 - \lambda)C^1}(E) = \frac{7\lambda}{6}.
\]
This shows that if $p$ belongs to $E$, then
\begin{equation}\label{1_E-C_ub}
    \delta_p(S_1, (1 - \lambda)C^1) \leq \frac{6}{7\lambda}.
\end{equation}

\begin{lemma}\label{1_S-C}
Suppose that $p$ is in $S_1\setminus C^1$. Then
    \[
        \delta_p(S_1,(1-\lambda)C^1)=\left\{\aligned &\frac{12}{13\lambda}& \text{ if }p\not\in E,\\
        &\frac{6}{7\lambda}&\text{ if }p\in E.
        \endaligned\right.
    \]
\end{lemma}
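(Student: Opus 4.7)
The plan is to match the two upper bounds already recorded in \eqref{1_F-E_ub} and \eqref{1_E-C_ub} with matching lower bounds via Theorem~\ref{AZ}. Take $f=\mathrm{id}_{S_1}$ and choose $G=F$, the unique $0$-curve through $p$ (the member of the base-point-free pencil $|\phi_1^{\ast}\mathcal{O}_{\mathbb{P}^2}(1)-E|$ through $p$). Since $p\notin C^1$ by hypothesis, the divisor $(1-\lambda)C^1+F$ is log smooth at $p$, so the pair is plt there and Theorem~\ref{AZ} gives
\[
\delta_p(S_1,(1-\lambda)C^1)\geq \min\left\{\frac{1}{S_{S_1,(1-\lambda)C^1}(F)},\ \frac{A_{F,\Delta_F}(p)}{S(W^F_{\bullet,\bullet};p)}\right\}=\min\left\{\frac{12}{13\lambda},\ \frac{A_{F,\Delta_F}(p)}{S(W^F_{\bullet,\bullet};p)}\right\},
\]
using the value $S_{S_1,(1-\lambda)C^1}(F)=\tfrac{13\lambda}{12}$ computed just above.

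The substantive step is the evaluation of $S(W^F_{\bullet,\bullet};p)$. Reading off the Zariski decomposition recorded above, I would use $P(t)\cdot F=2\lambda$ on $[0,\lambda]$ and $P(t)\cdot F=3\lambda-t$ on $[\lambda,3\lambda]$, together with $N(t)|_F=(t-\lambda)\,q_E$ on $[\lambda,3\lambda]$, where $q_E:=F\cap E$. Integrating $h(F,p,t)$ over $[0,3\lambda]$ and dividing by $(-K_{S_1}-(1-\lambda)C^1)^2=8\lambda^2$ yields
\[
S(W^F_{\bullet,\bullet};p)=\begin{cases}\dfrac{5\lambda}{6}&\text{if }p\neq q_E,\\[0.4em]\dfrac{7\lambda}{6}&\text{if }p=q_E,\end{cases}
\]
the gap being the single integral $\int_{\lambda}^{3\lambda}(3\lambda-t)(t-\lambda)\,dt=\tfrac{4\lambda^3}{3}$ contributed at $q_E$. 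Because $p\notin C^1$, the different $\Delta_F=(1-\lambda)C^1|_F$ is supported away from $p$, so $A_{F,\Delta_F}(p)=1$, and $p=q_E$ is equivalent to $p\in E$.

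Assembling these inputs: when $p\notin E$ the second entry in the minimum is $\tfrac{6}{5\lambda}>\tfrac{12}{13\lambda}$, forcing $\delta_p(S_1,(1-\lambda)C^1)\geq\tfrac{12}{13\lambda}$; when $p\in E$ the second entry is $\tfrac{6}{7\lambda}<\tfrac{12}{13\lambda}$, forcing $\delta_p(S_1,(1-\lambda)C^1)\geq\tfrac{6}{7\lambda}$. In each case the lower bound meets the corresponding upper bound, proving the lemma. I expect no genuine obstacle: the Zariski decomposition is already supplied, the plt hypothesis reduces to the standing assumption $p\notin C^1$, and the adjunction/different computation is trivial for the same reason. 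The only mildly delicate point is the mixed integral producing the extra $\tfrac{\lambda}{3}$ at $q_E$, which is precisely what makes the bound on $E$ strictly sharper than the bound off $E$.
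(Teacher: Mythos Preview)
Your proof is correct and takes a genuinely different route from the paper. The paper treats the two cases with two different divisors: for $p\notin E$ it picks an irreducible curve $L\in|E+F|$ through $p$ (obtaining $S_{S_1,(1-\lambda)C^1}(L)=\tfrac{5\lambda}{6}$ and $S(W^L_{\bullet,\bullet};p)=\tfrac{13\lambda}{12}$), and for $p\in E$ it uses $E$ itself (obtaining $S(W^E_{\bullet,\bullet};p)=\tfrac{13\lambda}{12}$). You instead run Theorem~\ref{AZ} with $G=F$ uniformly, letting the dichotomy emerge from whether $p=q_E$; the values $\tfrac{5\lambda}{6}$ and $\tfrac{7\lambda}{6}$ you compute for $S(W^F_{\bullet,\bullet};p)$ are exactly the paper's $S$-values for $L$ and $E$, just appearing on the opposite side of the minimum. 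Your approach is more economical and reuses the Zariski decomposition already displayed before the lemma. One small trade-off: the paper's $L$ moves in a pencil through $p$, so a general choice meets $C^1$ transversally and the global plt hypothesis of Theorem~\ref{AZ} is automatic; your $F$ is uniquely determined, and if it happens to be tangent to $C^1$ then $(S_1,(1-\lambda)C^1+F)$ fails to be globally plt for $\lambda\leq\tfrac12$. Since $p\notin C^1$, the pair is certainly plt near $p$ and the Abban--Zhuang inequality at $p$ still holds, but you are leaning on a local version of the theorem rather than the global statement as formulated in the paper.
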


\begin{proof}
Suppose that $p$ is not in $E$. 
We choose an irreducible curve $L$ in the linear system $|E + F|$ passing through $p$. Then $\phi_1(L)$ is a line not passing through  $x$. We compute
\[
    -K_{S_1} - (1 - \lambda)C^1 - tL \equiv (2\lambda - t)E + (3\lambda - t)F,
\]
which is nef and big for $0 \leq t < 2\lambda$, and not pseudoeffective for $t > 2\lambda$. Therefore, we have
\[
    \mathrm{vol}\left(-K_{S_1} - (1 - \lambda)C^1 - tL\right) = 8\lambda^2 - 6\lambda t + t^2,
\]
and hence
\[
    S_{S_1, (1 - \lambda)C^1}(L) = \frac{5}{6\lambda}.
\]

Meanwhile, we also have
\[
    S(W^L_{\bullet, \bullet}; p) = \frac{2}{(-K_{S_1} - (1 - \lambda)C^1)^2} \int_0^{2\lambda} \frac{1}{2} (P(t) \cdot L)^2 \, dt 
    = \frac{1}{4\lambda^2} \int_0^{2\lambda} \frac{(3\lambda - t)^2}{2} \, dt = \frac{13\lambda}{12}.
\]

Put $K_L + \Delta_L := (K_{S_1} + (1 - \lambda)C^1 + L)|_L$. Then $A_{L, \Delta_L}(p) = 1$ since $p$ is not in $C^1$. It then follows from Theorem~\ref{AZ} that
\begin{equation}\label{1_F-E_lb}
    \delta_p(S_1, (1 - \lambda)C^1) \geq \min\left\{ \frac{1}{13\lambda / 12}, \frac{1}{5\lambda / 6} \right\} = \frac{12}{13\lambda}.
\end{equation}

Consequently, combining \eqref{1_F-E_ub} and \eqref{1_F-E_lb}, we conclude the proof for the case when $p$ does not lie on $E$.

Suppose that $p$ is on $E$, then
    \[
        S(W^E_{\bullet,\bullet};p)=\frac{2}{(-K_{S_1}-(1-\lambda)C^1)^2}\int_0^{2\lambda}\frac{1}{2}(P(t)\cdot E)^2dt=\frac{1}{4\lambda^2}\int_0^{2\lambda}\frac{(\lambda+t)^2}{2}dt=\frac{13\lambda}{12}
    \]
    Put $K_E+\Delta_E:=(K_{S_1}+(1-\lambda)C^1+E)|_E$, then $A_{E,\Delta_E}(p)=1$. It then follows from Theorem~\ref{AZ} that
    \begin{equation}\label{1_E-C_lb}
        \delta_p(S_1,(1-\lambda)C^1)\geq\min\left\{\frac{1}{7\lambda/6},\frac{1}{13\lambda/12}\right\}=\frac{6}{7\lambda}.
    \end{equation}
    Consequently, combining \eqref{1_E-C_ub} and \eqref{1_E-C_lb} determines the value of the local $\delta$-invariant  for the case when $p$ belongs to $E$.
\end{proof}

\begin{lemma}\label{1_C-E_notflex} Suppose that $p$ is a point in $C^1\setminus E$ such that $\phi_1(p)$ is not an inflection point of the smooth cubic curve $\phi_1(C^1)$ and $C^1$ is transverse to $F$. Then, $$\min\left\{\frac{12}{13\lambda},\frac{4+8\lambda}{11\lambda},\frac{48}{25}\right\}\leq\delta_p(S_1,(1-\lambda)C^1)\leq\min\left\{\frac{12}{13\lambda},\frac{4+8\lambda}{11\lambda}\right\}.$$
    In particular, for $\lambda\geq\frac{25}{82}$, we have $$\delta_p(S_1,(1-\lambda)C^1)=\min\left\{\frac{12}{13\lambda},\frac{4+8\lambda}{11\lambda}\right\}=\begin{cases}
        \frac{12}{13\lambda}, &\frac{10}{13}\leq\lambda\leq 1,\\
        \frac{4+8\lambda}{11\lambda}, &\frac{25}{82}\leq\lambda\leq\frac{10}{13}.
    \end{cases}$$
\end{lemma}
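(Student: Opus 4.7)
The plan is to apply Theorem~\ref{AZ} to a single, carefully chosen weighted blowup at $p$; the three entries in the asserted minimum will emerge respectively as the global $A/S$ ratio and two restriction ratios on the exceptional divisor. The setup begins by choosing an auxiliary curve. Because $C^1$ is transverse to $F$ at $p$, the tangent direction to $\phi_1(C^1)$ at $\phi_1(p)$ does not point toward $x$, so the tangent line lifts to a smooth curve $L \in |E+F|$ on $S_1$ distinct from $F$, and the non-inflection hypothesis gives $(L\cdot C^1)_p = 2$. Let $\sigma_1:\hat S_1\to S_1$ be the weighted $(1,2)$-blowup at $p$ with respect to $L$, with exceptional divisor $\hat G$. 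Then $A_{S_1,(1-\lambda)C^1}(\hat G) = 3 - 2(1-\lambda) = 1+2\lambda$, the surface $\hat S_1$ has a single $A_1$ singular point $q_0 \in \hat G$, and the strict transforms $\hat L$ and $\hat F$ meet $\hat G$ transversely at the smooth point $q_L$ and at the singular point $q_0$ respectively, with $\hat L\cdot\hat G = 1$ and $\hat F\cdot\hat G = \tfrac{1}{2}$.

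For the upper bound, the inequality $\delta_p(S_1,(1-\lambda)C^1) \le 12/(13\lambda)$ is already recorded in \eqref{1_F-E_ub}. For the additional upper bound I would write
\[
\sigma_1^*\!\left(-K_{S_1}-(1-\lambda)C^1\right) - t\hat G \equiv 2\lambda\hat E + 3\lambda\hat F + (3\lambda-t)\hat G
\]
and work out the Zariski decomposition: the divisor remains nef on $[0,3\lambda]$, acquires $\hat L$ in the negative part on $[3\lambda,4\lambda]$ with coefficient $t-3\lambda$ (forced by $\hat L^2 = -1$ and $D(t)\cdot \hat L = 3\lambda - t$), and additionally $\hat F$ on $[4\lambda,5\lambda]$ with coefficient $t-4\lambda$, while $\tau_{S_1,(1-\lambda)C^1}(\hat G) = 5\lambda$. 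A direct integration of $P(t)^2$ over these three subintervals yields $S_{S_1,(1-\lambda)C^1}(\hat G) = 11\lambda/4$, whence $A/S = (4+8\lambda)/(11\lambda)$.

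For the lower bound I would invoke Theorem~\ref{AZ} applied to $\sigma_1$. The global term is the just-computed $(4+8\lambda)/(11\lambda)$. For the restriction term, the only points where $A_{\hat G,\Delta_{\hat G}}(q)$ can drop below $1$ are $q_{C^1}$ (smooth, with $A = \lambda$ by $\hat C^1\cdot\hat G = 1$), $q_L$ (smooth, with $A = 1$), and $q_0$ ($A_1$ singularity, with $A = 1/2$ via the formula in Section~2). Plugging the Zariski data into the integrand $h(\hat G,q,t)$---with $\mathrm{ord}_{q_0}(N(t)|_{\hat G}) = (t-4\lambda)/2$ reflecting the orbifold contribution of $\hat F$ at $q_0$---produces
\[
S(W^{\hat G}_{\bullet,\bullet};q_{C^1}) = \tfrac{25\lambda}{48},\qquad S(W^{\hat G}_{\bullet,\bullet};q_L) = \tfrac{5\lambda}{6},\qquad S(W^{\hat G}_{\bullet,\bullet};q_0) = \tfrac{13\lambda}{24},
\]
with ratios $48/25$, $6/(5\lambda)$, and $12/(13\lambda)$ respectively (at every other point of $\hat G$ the $N(t)|_{\hat G}$ contribution vanishes, giving the larger ratio $48/(25\lambda)\ge 48/25$). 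Since $6/(5\lambda)\ge 12/(13\lambda)$ always, the infimum over $q\in\hat G$ equals $\min\{48/25,\,12/(13\lambda)\}$, and combining with the global term yields the asserted three-term lower bound.

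The concluding equality claim for $\lambda\ge 25/82$ reduces to the elementary comparisons $(4+8\lambda)/(11\lambda) \le 48/25 \iff \lambda \ge 25/82$ and $(4+8\lambda)/(11\lambda) \le 12/(13\lambda) \iff \lambda \le 10/13$. The main technical hurdle will be the Zariski decomposition bookkeeping---verifying the order in which $\hat L$ and $\hat F$ enter the negative part by checking intersections against every extremal curve on $\hat S_1$---together with the careful handling of $\mathrm{ord}_{q_0}(N(t)|_{\hat G})$ at the $A_1$ singularity, where the $\mathbb{Q}$-Cartier factor $1/2$ in $\hat F\cdot\hat G$ must be tracked correctly in evaluating $h(\hat G, q_0, t)$.
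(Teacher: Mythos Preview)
Your proposal is correct and follows the paper's own argument essentially line for line: the same curve $L\in|E+F|$ tangent to $C^1$ at $p$, the same $(1,2)$-weighted blowup $\sigma_1$ along $L$, the same three-interval Zariski decomposition yielding $S_{S_1,(1-\lambda)C^1}(\hat G)=11\lambda/4$, and the same restriction computations on $\hat G$ (your $q_0$ is the paper's $q_F$). The only cosmetic slip is the phrase ``points where $A_{\hat G,\Delta_{\hat G}}(q)$ can drop below $1$'' when listing $q_L$, where $A=1$; what you mean is simply the points where either $A<1$ or the $N(t)|_{\hat G}$ contribution is nonzero, and that is exactly what you then compute.
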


\begin{proof}
    Let $L$ be the unique curve in the linear system $|E + F|$ that is tangent to $C^1$ at~$p$.
    Note that $L$ is irreducible since $C^1$ is transverse to $F$.
    Then, define $\sigma_1:\hat{S}_1\rightarrow (S_1,(1-\lambda)C^1)$ as the $(1,2)$-blowup with respect to $L$. Note that $q_F$ is an $\mathrm{A}_1$ singularity. The construction of $\hat{S}_1$ is illustrated as follows:

    \begin{center}
    \begin{tikzpicture}[scale=0.7, every node/.style={scale=0.7}]
        \draw (-8,-0.7) -- (-5,-.7);
        \draw (-5,-.95) node {$E$};
        \draw (-5.75,-.5) -- (-7,2);
        \draw (-7.25,2) node {$L$};
        \draw (-5.5,1) -- (-6.5,-1);
        \draw (-5.5,1.25) node {$F$};
        \draw (-8,-1) .. controls (-2.5,0) and (-10.5,1) .. (-5,2);
        \draw (-4.7,2) node {$C^1$};
        \filldraw[red] (-6,0) circle (2pt);
        \draw[red] (-5.7,0) node {$p$};

        \draw[->] (-3.5,.5) -- (-4.5,.5);
        \draw (-4,.75) node {$\pi^1_1$};
        \draw[->] (-3.75,-2.25) -- (-4.5,-1.5);
        \draw (-4.5,-2) node {$\sigma_1$};

        \draw (-3,-0.8) -- (0,-.8);
        \draw (0,-1.05) node {$E_1$};
        \draw (-3,-1) .. controls (1,0) and (1,1) .. (-3,2);
        \draw (-3.3,2) node {$C^1_1$};
        \draw[red] (-3,1.25) -- (0,1.25);
        \draw[red] (0,1.5) node {$G^1_1$};
        \draw (-.75,2) -- (-.75,-.5);
        \draw (-.75,2.25) node {$L_1$};
        \draw (-2,-1) -- (-2,1.5);
        \draw (-2,-1.25) node {$F_1$};
        \filldraw[blue] (-.75,1.25) circle (2pt);

        \draw[->] (1.5,.5) -- (.5,.5);
        \draw (1,.75) node {$\pi^1_2$};

        \draw (2,-.8) -- (5,-.8);
        \draw (5,-1.05) node {$E_2$};
        \draw (2,-1) .. controls (4,0) and (5,1) .. (5,2);
        \draw (5,2.25) node {$C^1_2$};
        \draw (2.5,-1) -- (2.5,1.3);
        \draw (2.5,-1.25) node {$F_2$};
        \draw[red] (2,0) -- (3,2);
        \draw[red] (3,2.25) node {$G^1_2$};
        \draw[blue] (2.25,1.5) -- (5.25,1.5);
        \draw[blue] (1.95,1.5) node {$G^2_2$};
        \draw (4,-.5) -- (4,2);
        \draw (4,2.25) node {$L_2$};

        \draw[->] (1.5,-1.5) -- (.75,-2.25);
        \draw (1.5,-2) node {$\tau_1$};

        \draw (-3,-5) .. controls (-1,-4) and (0,-3) .. (0,-2);
        \draw (0,-3) node {$\hat{C}^1$};
        \draw (-3,-4.8) -- (0,-4.8);
        \draw (0,-5.05) node {$\hat{E}$};
        \draw (-2.5,-5) -- (-2.5,-2);
        \draw (-2.5,-5.25) node {$\hat{F}$};
        \draw[blue] (-3, -2.5) -- (.5,-2.5);
        \draw[blue] (-3.3,-2.5) node {$\hat{G}$};
        \draw (-1,-4.5) -- (-1,-2);
        \draw (-0.7,-2) node {$\hat{L}$};
        \filldraw[red] (-2.5,-2.5) circle (2pt);
        \draw[red] (-1.9,-2) node {$\frac{1}{2}(1,1)$};
        \draw[red] (-2.25,-2.75) node {$q_F$};
        \filldraw (-1,-2.5) circle (2pt);
        \draw (-1.3,-2.75) node {$q_L$};
        \filldraw (-1/12,-2.5) circle (2pt);
        \draw (.25,-2.25) node {$q_{C^1}$};
    \end{tikzpicture}
    \phantom{spacespa}
    \end{center}

    Note that $\hat{L}\equiv\hat{E}+\hat{F}-\hat{G}$ and
    \[
        \sigma_1^\ast L=\hat{L}+2\hat{G},\quad\sigma_1^\ast K_{S_1}=K_{\hat{S}_1}-2\hat{G},\quad\sigma_1^\ast C^1=\hat{C}^1+2\hat{G},\quad\sigma_1^\ast F=\hat{F}+\hat{G},\quad\sigma_1^\ast E=\hat{E}.
    \]
    In particular, we have
    \[
         A_{S_1,(1-\lambda)C^1}(\hat{G})=1+2\lambda.
    \]
    
    The intersections are given as follows:
        \[
    \begin{split}
        &\hat{E}^2=\hat{L}^2=-1,\quad\hat{F}^2=\hat{G^2}=-\frac{1}{2},\quad\hat{E}\cdot\hat{F}=\hat{G}\cdot\hat{L}=1,\\
        &\hat{E}\cdot\hat{G}=\hat{E}\cdot\hat{L}=\hat{F}\cdot\hat{L}=0,\quad\hat{F}\cdot\hat{G}=\frac{1}{2}.
    \end{split}
    \]
    
    Since $T_2$ is a weak del Pezzo surface, $\hat{S}$ is a Mori dream space, and its Mori cone is
    \[
        \overline{NE}(\hat{S}_1)=\mathrm{Cone}\{[\hat{E}],[\hat{F}],[\hat{G}],[\hat{L}]\},
    \]
    by Proposition~\ref{MDS}. We have
  \begin{align*}
      \sigma_1^\ast\left(-K_{S_1}-(1-\lambda)C^1\right)-t\hat{G}
       &\equiv2\lambda\hat{E}+3\lambda\hat{F}+(3\lambda-t)\hat{G}\\
       &\equiv(t-3\lambda)\hat{L}+(5\lambda-t)\hat{E}+(6\lambda-t)\hat{F},
    \end{align*}
    and it is pseudoeffective only for  $t\leq5\lambda$. Its Zariski decomposition is given as follows:
      \begin{align*}
        P(t)=\begin{cases}
            2\lambda\hat{E}+3\lambda\hat{F}+(3\lambda-t)\hat{G}\\
            (5\lambda-t)\hat{E}+(6-\lambda)\hat{F}\\
            (5\lambda-t)(\hat{E}+2\hat{F})
        \end{cases};\quad
        N(t)=\begin{cases}
            0, &\ \ \ 0\leq t\leq3\lambda,\\
            (t-3\lambda)\hat{L}, &\ \ \ 3\lambda\leq t\leq4\lambda,\\
            (t-4\lambda)\hat{F}+(t-3\lambda)\hat{L}, &\ \ \ 4\lambda\leq t\leq5\lambda.
        \end{cases}
        \\
    \end{align*}
    Then,
    \begin{align*}
        \mathrm{vol}\left(\sigma_1^\ast\left(-K_{S_1}-(1-\lambda)C^1\right)-t\hat{G}\right)=P(t)^2=\begin{cases}
            8\lambda^2-\frac{1}{2}t^2, &0\leq t\leq 3\lambda,\\
            \frac{1}{2}t^2-6\lambda t+17\lambda^2, &3\lambda\leq t\leq4\lambda,\\
            (5\lambda-t)^2, &4\lambda\leq t\leq5\lambda,
        \end{cases}
    \end{align*}
    and hence
    \[
        S_{S_1,(1-\lambda)C^1}(\hat{G})=\frac{11\lambda}{4}.
    \]
    Combining \eqref{1_F-E_ub}, we obtain the upper bound
    \begin{equation}\label{1_E-C_notflex_ub}
        \delta_p(S_1,(1-\lambda)C^1)
        \leq\min\left\{\frac{A_{S_1,(1-\lambda)C^1}(F)}{S_{S_1,(1-\lambda)C^1}(F)},\frac{A_{S_1,(1-\lambda)C^1}(\hat{G})}{S_{S_1,(1-\lambda)C^1}(\hat{G})}\right\}=\min\left\{\frac{12}{13\lambda},\frac{4+8\lambda}{11\lambda}\right\}.
    \end{equation}
    On the other hand, for each $q$ on $\hat{G}$, 
    \[
        h(\hat{G},q,t)=\begin{cases}
            \frac{1}{8}t^2, &0\leq t\leq3\lambda,\\
            \frac{6\lambda-t}{2}\cdot\mathrm{ord}_q(t-3\lambda)q_L+\frac{(6\lambda-t)^2}{8}, &3\lambda\leq t\leq4\lambda,\\
            (5\lambda-t)\cdot\mathrm{ord}_q\left(\frac{t-4\lambda}{2}q_F+(t-3\lambda)q_L\right)+\frac{(5\lambda-t)^2}{2}, &4\lambda\leq t\leq5\lambda,
        \end{cases}
    \]
    and hence
    \[
        S(W^{\hat{G}}_{\bullet,\bullet};q)=\begin{cases}
            \frac{25\lambda}{48}, &q\neq q_F,q_L,\\
            \frac{13\lambda}{24}, & q=q_F,\\
            \frac{5\lambda}{6}, &q=q_L.
        \end{cases}
    \]
    Put $K_{\hat{G}}+\Delta_{\hat{G}}:=(K_{\hat{S}_1}+(1-\lambda)\hat{C}+\hat{G})|_{\hat{G}}$, then
    \[
        A_{\hat{G},\Delta_{\hat{G}}}(q)=\begin{cases}
            1, &q\neq q_F,q_{C^1},\\
            \frac{1}{2}, &q=q_F,\\
            \lambda, &q=q_{C^1}.
        \end{cases}
    \]
    It then follows from Theorem~\ref{AZ} that
    \begin{equation}\label{1_E-C_notflex_lb}
        \delta_p(S_1,(1-\lambda)C^1)
        \geq\min\left\{\frac{4+8\lambda}{11\lambda},\frac{48}{25\lambda},\frac{48}{25},\frac{12}{13\lambda},\frac{6}{5\lambda}\right\}
        =\begin{cases}
            \frac{12}{13\lambda},&\frac{10}{13}\leq\lambda\leq 1,\\
            \frac{4+8\lambda}{11\lambda},&\frac{25}{82}\leq\lambda\leq\frac{10}{13}\\
            \frac{48}{25},&0<\lambda\leq\frac{25}{82}.
        \end{cases}
    \end{equation}
    Consequently, \eqref{1_E-C_notflex_ub} and \eqref{1_E-C_notflex_lb} complete the proof.
\end{proof}

\begin{lemma}\label{1_C-E_flex}
    Suppose that $p$ is on $C^1\setminus E$ such that $\phi_1(p)$ is an inflection point of $\phi_1(C^1)$. Then, $$\min\left\{\frac{12}{13\lambda},\frac{12+36\lambda}{43\lambda},\frac{48}{17}\right\}\leq\delta_p(S_1,(1-\lambda)C^1)\leq\min\left\{\frac{12}{13\lambda},\frac{12+36\lambda}{43\lambda}\right\}.$$
    In particular, for $\lambda\geq\frac{17}{121}$, we have $$\delta_p(S_1,(1-\lambda)C^1)=\min\left\{\frac{12}{13\lambda},\frac{12+36\lambda}{43\lambda}\right\}=\begin{cases}
        \frac{12}{13\lambda}, &\frac{10}{13}\leq\lambda\leq1,\\
        \frac{12+36\lambda}{43\lambda},&\frac{17}{121}\leq\lambda\leq\frac{10}{13}.
    \end{cases}$$
\end{lemma}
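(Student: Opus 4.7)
The strategy parallels Lemma~\ref{1_C-E_notflex}, replacing the $(1,2)$-blowup by the $(1,3)$-blowup to reflect the higher contact order at a flex. Since $\phi_1(p)$ is an inflection point of $\phi_1(C^1)$, the tangent line to $\phi_1(C^1)$ at $\phi_1(p)$ meets $\phi_1(C^1)$ only at $\phi_1(p)$, with multiplicity three; this tangent does not pass through $x$ (otherwise the line $\phi_1(F)$ through $x$ and $\phi_1(p)$ would have $\phi_1(F)\cdot\phi_1(C^1) \geq 3 + 1 = 4$, contradicting $\phi_1(F)\cdot\phi_1(C^1) = 3$). Let $L\in |E+F|$ denote the lift of this tangent, which is therefore irreducible, and let $\sigma_1 : \hat{S}_1 \to S_1$ be the $(1,3)$-blowup at $p$ with respect to $L$. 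The contraction $\tau_1$ produces an $A_2$ singularity $q^*\in \hat{G}$, and by construction $\hat{F}$ passes through $q^*$, while $\hat{L}$ and $\hat{C}^1$ meet $\hat{G}$ at distinct smooth points $q_L$ and $q_{C^1}$. Since $T^1_3$ is a weak del Pezzo surface, Proposition~\ref{MDS} shows that $\hat{S}_1$ is a Mori dream space with $\overline{NE}(\hat{S}_1) = \mathrm{Cone}\{[\hat{E}],[\hat{F}],[\hat{G}],[\hat{L}]\}$.

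Using $\mathbb{Q}$-pullbacks through $\tau_1$ (with coefficients $\tfrac{2}{3},\tfrac{1}{3}$ for $\tau_1^*\hat{F}$ and $\tfrac{1}{3},\tfrac{2}{3}$ for $\tau_1^*\hat{G}$ on the two components of the contracted $A_2$ chain), I record $\hat{F}^2 = \hat{G}^2 = -\tfrac{1}{3}$, $\hat{F}\cdot\hat{G} = \tfrac{1}{3}$, $\hat{L}^2 = -2$, $\hat{L}\cdot\hat{G} = 1$, the numerical relation $\hat{L}\equiv \hat{E}+\hat{F}-2\hat{G}$, and the log discrepancy $A_{S_1,(1-\lambda)C^1}(\hat{G})=1+3\lambda$. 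Writing
\[
\sigma_1^*(-K_{S_1}-(1-\lambda)C^1) - t\hat{G} \equiv 2\lambda\hat{E}+3\lambda\hat{F}+(3\lambda-t)\hat{G} \equiv \tfrac{t-3\lambda}{2}\hat{L}+\tfrac{7\lambda-t}{2}\hat{E}+\tfrac{9\lambda-t}{2}\hat{F},
\]
I read off a three-piece Zariski decomposition with break points $t = 3\lambda, 6\lambda$ and pseudoeffective threshold $7\lambda$: nef on $[0,3\lambda]$; $N(t)=\tfrac{t-3\lambda}{2}\hat{L}$ on $[3\lambda,6\lambda]$ (with $P(t) = \tfrac{7\lambda-t}{2}\hat{E}+\tfrac{9\lambda-t}{2}\hat{F}$); and $N(t)=\tfrac{t-3\lambda}{2}\hat{L}+(t-6\lambda)\hat{F}$ on $[6\lambda,7\lambda]$, with $P(t)=\tfrac{7\lambda-t}{2}(\hat{E}+3\hat{F})$. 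Integrating $P(t)^2$ yields $S_{S_1,(1-\lambda)C^1}(\hat{G}) = \tfrac{43\lambda}{12}$. Combined with \eqref{1_F-E_ub} applied to $F$, this gives the upper bound $\delta_p(S_1,(1-\lambda)C^1)\leq \min\bigl\{\tfrac{12}{13\lambda},\,\tfrac{12+36\lambda}{43\lambda}\bigr\}$.

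For the lower bound, I apply Theorem~\ref{AZ}. Piecewise computation of $P(t)\cdot\hat{G}$ together with the restriction data $\hat{L}|_{\hat{G}} = q_L$ (transverse) and $\hat{F}|_{\hat{G}} = \tfrac{1}{3}q^*$ (from the local intersection at the $A_2$ point) produces
\[
S(W^{\hat{G}}_{\bullet,\bullet};q) = \tfrac{17\lambda}{48} \text{ at generic } q \text{ and at } q_{C^1}, \qquad \tfrac{13\lambda}{36} \text{ at } q^*, \qquad \tfrac{5\lambda}{6} \text{ at } q_L.
\]
Using the different formula of the excerpt, $A_{\hat{G},\Delta_{\hat{G}}}(q^*) = \tfrac{1}{3}$ (the $A_2$ point has type $\tfrac{1}{3}(1,2)$, and $\hat{C}^1$ does not pass through $q^*$), $A_{\hat{G},\Delta_{\hat{G}}}(q_{C^1}) = \lambda$, and $A_{\hat{G},\Delta_{\hat{G}}}(q) = 1$ otherwise. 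Since $\tfrac{48}{17\lambda}$ (generic $q$) and $\tfrac{6}{5\lambda}$ (from $q_L$) are always dominated by $\tfrac{12}{13\lambda}$ (from $q^*$), Theorem~\ref{AZ} delivers
\[
\delta_p(S_1,(1-\lambda)C^1) \geq \min\Bigl\{\tfrac{12+36\lambda}{43\lambda},\tfrac{12}{13\lambda},\tfrac{48}{17}\Bigr\}.
\]
The ``in particular'' refinement follows by comparing $\tfrac{48}{17}$ with $\tfrac{12+36\lambda}{43\lambda}$ (equality at $\lambda = 17/121$) and $\tfrac{12}{13\lambda}$ with $\tfrac{12+36\lambda}{43\lambda}$ (equality at $\lambda = 10/13$).

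The principal technical obstacle is the bookkeeping at the $A_2$ singularity: extracting the $\mathbb{Q}$-valued intersection numbers $\hat{F}^2, \hat{G}^2, \hat{F}\cdot\hat{G}$ via $\tau_1^*$, correctly inputting $\hat{F}|_{\hat{G}} = \tfrac{1}{3}q^*$ into the integrand $h(\hat{G},q^*,t)$, and identifying the different coefficient $\tfrac{2}{3}$ at $q^*$ so that $A_{\hat{G},\Delta_{\hat{G}}}(q^*) = \tfrac{1}{3}$. Once these local data are pinned down, the three-piece Zariski decomposition and all subsequent volume and intersection integrals are routine.
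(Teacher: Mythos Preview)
Your proposal is correct and follows essentially the same approach as the paper's proof: the $(1,3)$-blowup at $p$ with respect to the flex tangent $L\in|E+F|$, the same Zariski decomposition with breaks at $3\lambda,6\lambda$ and threshold $7\lambda$, the same $S$-invariant $\tfrac{43\lambda}{12}$, and the same Abban--Zhuang refinement data (your $q^*$ is the paper's $q_F$). Your explicit justification that the tangent line avoids $x$ (via the intersection count $\geq 3+1$ on the cubic) is a nice addition the paper leaves implicit.
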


\begin{proof}

    As before, let $L$ be the unique curve in $|E + F|$ that is tangent to $C^1$ at $p$. The curve~$L$ is irreducible because $\phi_1(p)$ is an inflection point of $\phi_1(C^1)$.
Let $\sigma_1:\hat{S}_1\rightarrow(S_1,(1-\lambda)C^1)$ be the $(1,3)$-blowup with respect to $L$. Note that the point $q_F$ is an $\mathrm{A}_2$ singularity. This can be illustrated as follows:

    \begin{center}
    \begin{tikzpicture}[scale=0.7, every node/.style={scale=0.7}]
        \draw (-7,1.2) -- (-4,1.2);
        \draw (-4,.95) node {$E$};
        \draw (-7,1) .. controls (-1.5,2) and (-9.5,3) .. (-4,4);
        \draw (-4,4.25) node {$C^1$};
        \draw (-5.5,1) -- (-5.5,3);
        \draw (-5.5,.75) node {$F$};
        \draw (-7,3.7) -- (-4.5,1.7);
        \draw (-7,3.3) node {$L$};
        \filldraw[red] (-5.5,2.5) circle (2pt);
        \draw[red] (-5.2,2.7) node {$p$};

        \draw[->] (-2.5,2.5) -- (-3.5,2.5);
        \draw (-3,2.75) node {$\pi^1_1$};
        \draw[->] (-2.5,-.5) -- (-3.5,.5);
        \draw (-3.5,0) node {$\sigma_1$};

        \draw (-2,1.2) -- (1,1.2);
        \draw (1,.95) node {$E_1$};
        \draw (-2,1) .. controls (2,2) and (2,3) .. (-2,4);
        \draw (-2.3,4) node {$C^1_1$};
        \draw[red] (-2,3.25) -- (1,3.25);
        \draw[red] (1,3.5) node {$G^1_1$};
        \draw (-1.25,4) -- (1.5,2.625);
        \draw (-1.25,4.25) node {$L_1$};
        \draw (-1,1) -- (-1,3.5);
        \draw (-1,.75) node {$F_1$};
        \filldraw[blue] (.25,3.25) circle (2pt);

        \draw[->] (2.5,2.5) -- (1.5,2.5);
        \draw (2,2.75) node {$\pi^1_2$};

        \draw (3,1.2) -- (6,1.2);
        \draw (6,.95) node {$E_2$};
        \draw (3,1) .. controls (5,2) and (6,3) .. (6,4);
        \draw (6,4.25) node {$C^1_2$};
        \draw (3.5,1) -- (3.5,4);
        \draw (3.5,.75) node {$F_2$};
        \draw[red] (3,3.5) -- (5,3.5);
        \draw[red] (2.7,3.5) node {$G^1_2$};
        \draw[blue] (4,3.75) -- (5.75,2);
        \draw[blue] (6,2) node {$G^2_2$};
        \draw (5.25,1.5) -- (5.25,4);
        \draw (5.25, 4.25) node {$L_2$};
        \filldraw[green] (5.25,2.5) circle (2pt);

        \draw[->] (4.5,-.5) -- (4.5,.5);
        \draw (4.8,0) node {$\pi^1_3$};

        \draw (3,-3.8) -- (6,-3.8);
        \draw (6,-4.05) node {$E_3$};
        \draw (3,-4) -- (6,-2.8);
        \draw (6.25,-2.8) node {$C^1_3$};
        \draw (3.7,-4) -- (3.7,-1);
        \draw (3.7,-4.25) node {$F_3$};
        \draw[red] (3,-1.5) -- (5,-1.5);
        \draw[red] (5.3,-1.5) node {$G^1_3$};
        \draw[blue] (4.5,-1) -- (4.5,-2.75);
        \draw[blue] (4.2,-1) node {$G^2_3$};
        \draw[green] (4,-2) -- (6,-3.5);
        \draw[green] (6.3,-3.5) node {$G^3_3$};
        \draw (4.75,-3) -- (6,-2);
        \draw (6.3,-2) node {$L_3$};

        \draw[->] (2.5,-2.5) -- (1.5,-2.5);
        \draw (2,-2.25) node {$\tau_1$};

        \draw (-2,-4) .. controls (0,-3) and (1,-2) .. (1,-1);
        \draw (1,-2) node {$\hat{C}^1$};
        \draw (-2,-3.8) -- (1,-3.8);
        \draw (1,-4.05) node {$\hat{E}$};
        \draw (-1,-4) -- (-1,-1);
        \draw (-1,-4.25) node {$\hat{F}$};
        \draw[green] (-2, -1.5) -- (1.5,-1.5);
        \draw[green] (-2.3,-1.5) node {$\hat{G}$};
        \draw (0,-2.25) -- (0,-1);
        \draw (0.3,-1) node {$\hat{L}$};
        \filldraw[purple] (-1,-1.5) circle (2pt);
        \draw[purple] (-1.7,-1) node {$\frac{1}{3}(1,2)$};
        \draw[purple] (-1.25,-1.75) node {$q_F$};
        \filldraw (0,-1.5) circle (2pt);
        \draw (-.3,-1.75) node {$q_L$};
        \filldraw (11/12,-1.5) circle (2pt);
        \draw (1.25,-1.25) node {$q_{C^1}$};
    \end{tikzpicture}
    \phantom{spacespace}
    \end{center}

    Note that $\hat{L} \equiv \hat{E} + \hat{F} - 2\hat{G}$, and the pullbacks by $\sigma_1$ are given by
\[
    \sigma_1^\ast L = \hat{L} + 3\hat{G}, \quad 
    \sigma_1^\ast K_{S_1} = K_{\hat{S}_1} - 3\hat{G}, \quad 
    \sigma_1^\ast C^1 = \hat{C}^1 + 3\hat{G}, \quad 
    \sigma_1^\ast F = \hat{F} + \hat{G}.
\]
In particular, the log discrepancy of $\hat{G}$ with respect to the pair $(S_1, (1 - \lambda)C^1)$ is
\[
    A_{S_1, (1 - \lambda)C^1}(\hat{G}) = 1 + 3\lambda.
\]

The intersection numbers on $\hat{S}_1$ are given by
 \[
\begin{split}
    &\hat{E}^2 = -1, \quad 
    \hat{F}^2 = \hat{G}^2 = -\tfrac{1}{3}, \quad 
    \hat{L}^2 = -2, \quad 
    \hat{E} \cdot \hat{F} = \hat{G} \cdot \hat{L} = 1, \\
    &\hat{E} \cdot \hat{G} = \hat{E} \cdot \hat{L} = \hat{F} \cdot \hat{L} = 0, \quad 
    \hat{F} \cdot \hat{G} = \tfrac{1}{3}.
\end{split}
\]

    Since $T_3$ is a weak del Pezzo surface, 
    $\hat{S}_1$ is also a Mori dream space, and its Mori cone is generated by 
 $[\hat{E}]$, $[\hat{F}]$, $[\hat{G}]$ and $[\hat{L}]$. From the numerical equivalence
    \[
    \begin{split}
        \sigma_1^\ast\left(-K_{S_1}-(1-\lambda)C^1\right)-t\hat{G}&\equiv2\lambda\hat{E}+3\lambda\hat{F}+(3\lambda-t)\hat{G}\\
        &\equiv\frac{t-3\lambda}{2}\hat{L}+\frac{7\lambda- t}{2}\hat{E}+\frac{9\lambda-t}{2}\hat{F},
    \end{split}
    \]
    we see that the divisor is pseudoeffective only for $t\leq 7\lambda$. The Zariski decomposition is given by
        \begin{align*}
        P(t)=\begin{cases}
            2\lambda\hat{E}+3\lambda\hat{F}+(3\lambda-t)\hat{G}\\
            \frac{7\lambda-t}{2}\hat{E}+\frac{9\lambda-t}{2}\hat{F}\\
            \frac{7\lambda-t}{2}(\hat{E}+3\hat{F})
        \end{cases};\quad
        N(t)=\begin{cases}
            0, &\ \ \ 0\leq t\leq 3\lambda,\\
            \frac{t-3\lambda}{2}\hat{L}, &\ \ \ 3\lambda\leq t\leq 6\lambda,\\
            (t-6\lambda)\hat{F}+\frac{t-3\lambda}{2}\hat{L}, &\ \ \ 6\lambda\leq t\leq7\lambda.
        \end{cases}
    \end{align*}
    Thus, we have
    \[
        \mathrm{vol}\left(\sigma_1^\ast\left(-K_{S_1}-(1-\lambda)C^1\right)-t\hat{G}\right)=P(t)^2=\begin{cases}
            8\lambda^2-\frac{1}{3}t^2, &0\leq t\leq 3\lambda,\\
            \frac{1}{6}t^2-3\lambda t+\frac{25}{2}\lambda^2, &3\lambda\leq t\leq6\lambda,\\
            \frac{(7\lambda-t)^2}{2}, &6\lambda\leq t\leq 7\lambda,
        \end{cases}
    \]
    and hence
    \[
        S_{S_1,(1-\lambda)C^1}(\hat{G})=\frac{43\lambda}{12}.
    \]
    Together with \eqref{1_F-E_ub}, this yields the upper bound
    \begin{equation}\label{1_C-E_flex_ub}
        \delta_p(S_1,(1-\lambda)C^1)
        \leq\min\left\{\frac{A_{S_1,(1-\lambda)C^1}(F)}{S_{S_1,(1-\lambda)C^1)}(F)},\frac{A_{S_1,(1-\lambda)C^1}(\hat{G})}{S_{S_1,(1-\lambda)C^1}(\hat{G})}\right\}=\min\left\{\frac{12}{13\lambda},\frac{12+36\lambda}{43\lambda}\right\}.   
    \end{equation}
    Meanwhile, for each $q$ on $\hat{G}$, 
    \[
        h(\hat{G},q,t)=\begin{cases}
            \frac{1}{18}t^2, &0\leq t\leq3\lambda,\\
            \frac{9\lambda-t}{6}\cdot\mathrm{ord}_q\frac{t-3\lambda}{2}q_L+\frac{(9\lambda-t)^2}{72}, &3\lambda\leq t\leq 6\lambda,\\
            \frac{7\lambda-t}{2}\cdot\mathrm{ord}_q\left(\frac{t-6\lambda}{3}q_F+\frac{t-3\lambda}{2}q_L\right)+\frac{(7\lambda-t)^2}{8}, &6\lambda\leq t\leq 7\lambda,
        \end{cases}
    \]
    and hence,
    \[
        S(W^{\hat{G}}_{\bullet,\bullet};q)=\begin{cases}
            \frac{17\lambda}{48}, &q\neq q_L,q_F,\\
            \frac{5\lambda}{6}, &q=q_L,\\
            \frac{13\lambda}{36}, &q=q_F.
        \end{cases}
    \]
    Put $K_{\hat{G}}+\Delta_{\hat{G}}:=(K_{\hat{S}_1}+(1-\lambda)\hat{C}+\hat{G})|_{\hat{G}}$, then
    \[
        A_{\hat{G},\Delta_{\hat{G}}}(q)=\begin{cases}
            1, &q\neq q_{C^1},q_F,\\
            \lambda, & q=q_{C^1},\\
            \frac{1}{3}, &q=q_F.
        \end{cases}
    \]
    It then follows from Theorem~\ref{AZ} that
    \begin{equation}\label{1_C-E_flex_lb}
        \delta_p(S_1,(1-\lambda)C^1)\geq\min\left\{\frac{12+36\lambda}{43\lambda},\frac{48}{17\lambda},\frac{48}{17},\frac{12}{13\lambda},\frac{6}{5\lambda}\right\}
        =\begin{cases}
            \frac{12}{13\lambda}, &\frac{10}{13}\leq\lambda\leq 1,\\
            \frac{12+36\lambda}{43\lambda}, &\frac{17}{121}\leq\lambda\leq\frac{10}{13\lambda},\\
            \frac{48}{17}, &0<\lambda\leq\frac{17}{121}.
        \end{cases}
    \end{equation}
    Consequently, \eqref{1_C-E_flex_ub} and \eqref{1_C-E_flex_lb} conclude the proof.
\end{proof}

\begin{lemma}\label{1_CF_tangent}
     Suppose that $p$ is on $C^1\setminus E$ and $C^1$ is tangent to $F$. Then, $$\min\left\{\frac{12}{13\lambda},\frac{1+2\lambda}{3\lambda},\frac{12}{5}\right\}\leq\delta_p(S_1,(1-\lambda_1)C^1)\leq\min\left\{\frac{12}{13\lambda},\frac{1+2\lambda}{3\lambda}\right\}.$$ In particular, for $\lambda\geq\frac{5}{26}$, we have $$\delta_p(S_1,(1-\lambda)C^1)=\min\left\{\frac{12}{13\lambda},\frac{1+2\lambda}{3\lambda}\right\}=\begin{cases}
        \frac{12}{13\lambda}, &\frac{23}{26}\leq\lambda\leq1,\\
        \frac{1+2\lambda}{3\lambda}, &\frac{5}{26}\leq\lambda\leq\frac{23}{26}.
    \end{cases}$$
\end{lemma}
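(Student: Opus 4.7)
The plan is to mirror Lemmas~\ref{1_C-E_notflex} and~\ref{1_C-E_flex}, but to use $F$ itself as the curve for the weighted blowup rather than an auxiliary $L \in |E+F|$. The reason the earlier construction breaks down is that the unique curve in $|E+F|$ tangent to $C^1$ at $p$ degenerates to the reducible $L = E + F$ precisely under the hypothesis of this lemma: the tangent line to $\phi_1(C^1)$ at $\phi_1(p)$ passes through $x$, so no irreducible such $L$ is available. Since $C^1$ has contact of order $2$ with $F$ at $p$, I define $\sigma_1\colon\hat{S}_1\to S_1$ to be the weighted $(1,2)$-blowup with respect to $F$, which produces an $A_1$ singularity on $\hat{S}_1$ from contracting the intermediate $(-2)$-curve $G^2_1$.

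The next step is to record the pullback data $\sigma_1^\ast F = \hat{F} + 2\hat{G}$, $\sigma_1^\ast C^1 = \hat{C}^1 + 2\hat{G}$, and hence $A_{S_1,(1-\lambda)C^1}(\hat{G}) = 1 + 2\lambda$, together with the intersection numbers $\hat{E}^2=-1$, $\hat{F}^2=-2$, $\hat{G}^2=-\frac{1}{2}$, $\hat{E}\cdot\hat{F} = \hat{F}\cdot\hat{G} = 1$, and $\hat{E}\cdot\hat{G}=0$. After verifying that $T^1_2$ is a weak del Pezzo surface, Proposition~\ref{MDS} gives that $\hat{S}_1$ is a Mori dream space with Mori cone $\mathrm{Cone}\{[\hat{E}],[\hat{F}],[\hat{G}]\}$. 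I then compute the Zariski decomposition of $\sigma_1^\ast\bigl(-K_{S_1}-(1-\lambda)C^1\bigr) - t\hat{G} \equiv 2\lambda\hat{E} + 3\lambda\hat{F} + (6\lambda-t)\hat{G}$, which has pseudoeffective threshold $6\lambda$ and breaks into three regimes, with $\hat{F}$ entering the negative part at $t = 2\lambda$, $\hat{E}$ joining at $t = 4\lambda$, and yielding the compact form $P(t) = (6\lambda - t)(\hat{E}+\hat{F}+\hat{G})$ on the last interval. Integrating $P(t)^2$ over $[0,6\lambda]$ gives $S_{S_1,(1-\lambda)C^1}(\hat{G}) = 3\lambda$, so together with \eqref{1_F-E_ub} the upper bound is the claimed $\min\{\frac{12}{13\lambda},\frac{1+2\lambda}{3\lambda}\}$.

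For the lower bound I apply Theorem~\ref{AZ} at each point of $\hat{G}$. The distinguished points are $q_F$ (where $\hat{F}$ meets $\hat{G}$, smooth), $q_{C^1}$ (where $\hat{C}^1$ meets $\hat{G}$, smooth), and the $A_1$ singularity $q_{\mathrm{sing}}$; a key geometric observation, in contrast to the previous two lemmas, is that $q_F \ne q_{\mathrm{sing}}$ here, because $\hat{F}$ does not pass through the image of the contracted curve $G^2_1$. The log discrepancies at these points are $1,\ \lambda,\ \frac{1}{2}$. Since $\hat{E}|_{\hat{G}} = 0$, only the $\hat{F}$-component of $N(t)|_{\hat{G}}$ contributes (at $q_F$ with multiplicity $1$), producing $S(W^{\hat{G}}_{\bullet,\bullet};q_F) = \frac{13\lambda}{12}$ and $S(W^{\hat{G}}_{\bullet,\bullet};q) = \frac{5\lambda}{12}$ at every other point. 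Theorem~\ref{AZ} then gives $\delta_p \geq \min\bigl\{\frac{12}{13\lambda},\ \frac{1+2\lambda}{3\lambda},\ \frac{12}{5\lambda},\ \frac{12}{5},\ \frac{6}{5\lambda}\bigr\}$, and observing that $\frac{12}{5\lambda}$ and $\frac{6}{5\lambda}$ are both dominated by $\frac{1+2\lambda}{3\lambda}$ in the relevant range collapses this to the claimed $\min\{\frac{12}{13\lambda},\frac{1+2\lambda}{3\lambda},\frac{12}{5}\}$. The main obstacle will be the careful identification of the new position of the $A_1$ singularity on $\hat{G}$ and the resulting rearrangement of the log-discrepancy/volume balance relative to Lemmas~\ref{1_C-E_notflex} and~\ref{1_C-E_flex}, where the singularity sat at $q_F$ and forced that point's log discrepancy to be $\frac{1}{n}$ rather than $1$.
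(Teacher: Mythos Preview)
Your proposal is correct and follows essentially the same approach as the paper: the $(1,2)$-blowup along $F$, the same pullback and intersection data, the same three-regime Zariski decomposition yielding $S(\hat{G})=3\lambda$, and the same application of Theorem~\ref{AZ} with $S(W^{\hat{G}}_{\bullet,\bullet};q)=\tfrac{13\lambda}{12}$ at $q_F$ and $\tfrac{5\lambda}{12}$ elsewhere. The one notational slip is that the contracted $(-2)$-curve is $G^1_2$ (the strict transform on $T^1_2$ of the first exceptional curve), not ``$G^2_1$''; the paper calls its image $q_0$, and your observation that $q_F\neq q_0$ is exactly the point.
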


\begin{proof}
Define $\sigma_1: \hat{S}_1 \rightarrow (S_1,(1-\lambda)C^1)$ as the $(1,2)$-blowup with respect to $F$.
Denote the image $\tau(G^1_2)$ by $q_0$ which is an $\mathrm{A}_1$ singularity.  
This process is illustrated as follows:
    \begin{center}
    \begin{tikzpicture}[scale=0.7, every node/.style={scale=0.7}]
        \draw (-7,1.75) -- (-4,1.75);
        \draw (-4,1.45) node {$E$};
        \draw (-7,1) .. controls (-4,2) and (-4,3) .. (-7,4);
        \draw (-7.3,4) node {$C^1$};
        \draw (-4.75,1) -- (-4.75,4);
        \draw (-4.5,1) node {$F$};
        \filldraw[red] (-4.75,2.5) circle (2pt);
        \draw[red] (-4.45,2.5) node {$p$};

        \draw[->] (-2.5,2.5) -- (-3.5,2.5);
        \draw (-3,2.8) node {$\pi^1_1$};
        \draw[->] (-2.5,-.5) -- (-3.5,.5);
        \draw (-3,-.3) node {$\sigma_1$};

        \draw (-2,1.75) -- (1,1.75);
        \draw (1,1.45) node {$E_1$};
        \draw (-2,1) -- (1,4);
        \draw (1,4.3) node {$C^1_1$};
        \draw (-.5,1) -- (-.5,4);
        \draw (-.2,1) node {$F_1$};
        \draw[red] (-2,2.5) -- (1,2.5);
        \draw[red] (-2,2.8) node {$G^1_1$};
        \filldraw[blue] (-.5,2.5) circle (2pt);

        \draw[->] (2.5,2.5) -- (1.5,2.5);
        \draw (2,2.8) node {$\pi^1_2$};
        \draw[->] (2.5,.5) -- (1.5,-.5);
        \draw (2,.3) node {$\tau_1$};

        \draw (3,1.75) -- (6,1.75);
        \draw (6,1.45) node {$E_2$};
        \draw (3.75,1) -- (3.75,4);
        \draw (3.75,.7) node {$C^1_2$};
        \draw (5.25,1) -- (5.25,4);
        \draw (5.26,.7) node {$F_2$};
        \draw[red] (4.5,2.5) -- (4.5,4);
        \draw[red] (4.5,2.2) node {$G^1_2$};
        \draw[blue] (3,3.25) -- (6,3.25);
        \draw[blue] (3,3.55) node {$G^2_2$};

        \draw (-2,-2.25) -- (1,-2.25);
        \draw (1.25,-2.25) node {$\hat{E}$};
        \draw (-1.25,-3) -- (-1.25,0);
        \draw (-1,-3) node {$\hat{C}^1$};
        \draw (.25,-3) -- (.25,0);
        \draw (.5,-3) node {$\hat{F}$};
        \draw[blue] (-2,-.75)--(1,-.75);
        \draw[blue] (1.3,-.75) node {$\hat{G}$};
        \filldraw (-1.25,-.75) circle (2pt);
        \draw (-1.5,-.45) node {$q_{C^1}$};
        \filldraw[red] (-.5,-.75) circle (2pt);
        \draw[red] (-.5,-.45) node {$q_0$};
        \draw[red] (-.5,-1.2) node {$\frac{1}{2}(1,1)$};
        \filldraw (.25,-.75) circle (2pt);
        \draw (.5,-.45) node {$q_F$};
    \end{tikzpicture}
    \end{center}
    
    We then have the following pullbacks by $\sigma_1$:
\[
    \sigma_1^\ast E = \hat{E}, \quad
    \sigma_1^\ast F = \hat{F} + 2\hat{G}, \quad
    \sigma_1^\ast K_{S_1}  = K_{\hat{S}_1} - 2\hat{G}, \quad
    \sigma_1^\ast C^1 = \hat{C}^1 + 2\hat{G}.
\]
    
Then, the log discrepancy of $\hat{G}$ with respect to the pair $(S_1, (1 - \lambda)C^1)$ equals
\[
    A_{S_1, (1 - \lambda)C^1}(\hat{G}) = 1 + 2\lambda.
\]

The intersection numbers among $\hat{E}$, $\hat{F}$, and $\hat{G}$ are given by
\[
    \hat{E}^2 = -1, \quad
    \hat{F}^2 = -2, \quad
    \hat{G}^2 = -\tfrac{1}{2}, \quad
    \hat{E} \cdot \hat{F} = \hat{F} \cdot \hat{G} = 1, \quad
    \hat{E} \cdot \hat{G} = 0.
\]

The surface $\hat{S}_1$ is a Mori dream space, and its Mori cone is generated by the classes $[\hat{E}], [\hat{F}], [\hat{G}]$. We compute
\[
    \sigma_1^*\big(-K_{S_1} - (1 - \lambda)C^1\big) - t\hat{G} \equiv 2\lambda \hat{E} + 3\lambda \hat{F} + (6\lambda - t)\hat{G},
\]
which is pseudoeffective only for $t \leq 6\lambda$.
The Zariski decomposition of this divisor is
\begin{align*}
    P(t) =
    \begin{cases}
        2\lambda \hat{E} + 3\lambda \hat{F} + (6\lambda - t)\hat{G} \\
        2\lambda \hat{E} + \tfrac{8\lambda - t}{2} \hat{F} + (6\lambda - t)\hat{G} \\
        (6\lambda - t)(\hat{E} + \hat{F} + \hat{G})
    \end{cases};\quad
    N(t) =
    \begin{cases}
        0, & \ \ \ 0 \leq t \leq 2\lambda, \\
        \tfrac{t - 2\lambda}{2} \hat{F}, & \ \ \ 2\lambda \leq t \leq 4\lambda, \\
        (t - 4\lambda)\hat{E} + (t - 3\lambda)\hat{F}, & \ \ \ 4\lambda \leq t \leq 6\lambda.
    \end{cases}
\end{align*}

Consequently, the volume function is
\[
    \mathrm{vol}\left(\sigma_1^*(-K_{S_1} - (1 - \lambda)C^1) - t\hat{G}\right) = P(t)^2 =
    \begin{cases}
        8\lambda^2 - \tfrac{1}{2}t^2, & 0 \leq t \leq 2\lambda, \\
        10\lambda^2 - 2\lambda t, & 2\lambda \leq t \leq 4\lambda, \\
        \tfrac{(6\lambda - t)^2}{2}, & 4\lambda \leq t \leq 6\lambda.
    \end{cases}
\]

This implies that
\[
    S_{S_1, (1 - \lambda)C^1}(\hat{G}) = 3\lambda.
\]

By inequality~\eqref{1_F-E_ub}, we obtain the upper bound
\begin{equation} \label{1_C_tangent_F-E_ub}
    \delta_p(S_1, (1 - \lambda)C^1)
    \leq \min\left\{
        \frac{A_{S_1, (1 - \lambda)C^1}(F)}{S_{S_1, (1 - \lambda)C^1}(F)},
        \frac{A_{S_1, (1 - \lambda)C^1}(\hat{G})}{S_{S_1, (1 - \lambda)C^1}(\hat{G})}
    \right\}
    = \min\left\{ \frac{12}{13\lambda}, \frac{1 + 2\lambda}{3\lambda} \right\}.
\end{equation}

On the other hand, for each point $q$ on $\hat{G}$, the function $h(\hat{G}, q, t)$ is given by
\[
    h(\hat{G}, q, t) =
    \begin{cases}
        \tfrac{1}{18}t^2, & 0 \leq t \leq 2\lambda, \\
        \lambda \cdot \mathrm{ord}_q\left(\tfrac{t - 2\lambda}{2} q_F\right) + \tfrac{\lambda^2}{2}, & 2\lambda \leq t \leq 4\lambda, \\
        \tfrac{6\lambda - t}{2} \cdot \mathrm{ord}_q\left((t - 3\lambda)q_F\right) + \tfrac{(6\lambda - t)^2}{8}, & 4\lambda \leq t \leq 6\lambda,
    \end{cases}
\]
which implies that
\[
    S(W^{\hat{G}}_{\bullet,\bullet}; q) =
    \begin{cases}
        \tfrac{5\lambda}{12}, & q \neq q_F, \\
        \tfrac{13\lambda}{12}, & q = q_F.
    \end{cases}
\]

Let \( K_{\hat{G}} + \Delta_{\hat{G}} := (K_{\hat{S}_1} + (1 - \lambda)\hat{C} + \hat{G})|_{\hat{G}} \). Then the log discrepancy along $q$ satisfies
\[
    A_{\hat{G}, \Delta_{\hat{G}}}(q) =
    \begin{cases}
        1, & q \neq q_0, q_{C^1}, \\
        \lambda, & q = q_{C^1}, \\
        \tfrac{1}{2}, & q = q_0.
    \end{cases}
\]

Applying Theorem~\ref{AZ}, we deduce the lower bound
\begin{equation} \label{1_C_tangent_F-E_lb}
    \delta_p(S_1, (1 - \lambda)C^1) \geq
    \min\left\{
        \frac{1 + 2\lambda}{3\lambda}, \,
        \frac{12}{5\lambda}, \,
        \frac{12}{5}, \,
        \frac{12}{13\lambda}, \,
        \frac{6}{5\lambda}
    \right\}
    =
    \begin{cases}
        \tfrac{12}{13\lambda}, & \tfrac{23}{26} \leq \lambda \leq 1, \\
        \tfrac{1 + 2\lambda}{3\lambda}, & \tfrac{5}{26} \leq \lambda \leq \tfrac{23}{26}, \\
        \tfrac{48}{17}, & 0 < \lambda \leq \tfrac{5}{26}.
    \end{cases}
\end{equation}

Combining the upper bound~\eqref{1_C_tangent_F-E_ub} and the lower bound~\eqref{1_C_tangent_F-E_lb} completes the proof.
\end{proof}

\begin{lemma}\label{1_CE_notflex}
     Suppose that $p$ is the intersection point of $C^1$ and $E$, and that $C^1$ intersects  $F$ transversely. Then, $$\min\left\{\frac{6}{7\lambda},\frac{4+4\lambda}{9\lambda},\frac{48}{25}\right\}\leq\delta_p(S_1,(1-\lambda)C^1)\leq\min\left\{\frac{6}{7\lambda},\frac{4+4\lambda}{9\lambda}\right\}.$$ In particular, for $\lambda\geq\frac{25}{83}$, we have $$\delta_p(S_1,(1-\lambda)C^1)=\min\left\{\frac{6}{7\lambda},\frac{4+4\lambda}{9\lambda}\right\}=\begin{cases}
        \frac{6}{7\lambda}, &\frac{13}{14}\leq\lambda\leq1,\\
        \frac{4+4\lambda}{9\lambda}, &\frac{25}{83}\leq\lambda\leq\frac{13}{14}.
    \end{cases}$$
\end{lemma}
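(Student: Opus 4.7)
My plan is to follow the template of Lemmas~\ref{1_C-E_notflex}--\ref{1_CF_tangent}: construct a plt blowup $\sigma_1:\hat S_1\to S_1$ at $p$, compute $S_{S_1,(1-\lambda)C^1}(\hat G)$ for the exceptional divisor $\hat G$, and then apply Theorem~\ref{AZ} using the auxiliary invariants $S(W^{\hat G}_{\bullet,\bullet};q)$ and $A_{\hat G,\Delta_{\hat G}}(q)$ at each relevant point $q\in\hat G$. Since the target ratio $\tfrac{4+4\lambda}{9\lambda}$ has numerator $4(1+\lambda)$, I expect $A_{S_1,(1-\lambda)C^1}(\hat G)=1+\lambda$, which is exactly what the ordinary blowup at $p$ produces (the $(1,1)$-blowup, no contraction needed). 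Because $C^1\cdot E=1$ forces transversality and $C^1$ meets the $0$-curve $F$ transversely by hypothesis, the smooth curves $C^1,E,F$ have pairwise distinct tangent directions at $p$, so the strict transforms $\hat C^1,\hat E,\hat F$ meet $\hat G$ transversely at three distinct points $q_{C^1},q_E,q_F$. The pair $(\hat S_1,(1-\lambda)\hat C^1+\hat G)$ is then simple normal crossing with $\hat G$ the only boundary component of coefficient $1$, hence plt, so $\sigma_1$ is a genuine plt blowup.

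Next I would perform the Zariski decomposition of the pencil
\[
    \sigma_1^{\ast}(-K_{S_1}-(1-\lambda)C^1)-t\hat G\equiv 2\lambda\,\hat E+3\lambda\,\hat F+(5\lambda-t)\hat G,
\]
using the intersection matrix $\hat E^2=-2$, $\hat F^2=\hat G^2=-1$, $\hat E\cdot\hat F=0$, $\hat E\cdot\hat G=\hat F\cdot\hat G=1$. I expect the pseudoeffective threshold to be $\tau=5\lambda$, with regime changes at $t=\lambda$ (where $\hat E$ enters $N(t)$, forced by $\hat E^2=-2$) and at $t=2\lambda$ (where $\hat F$ joins). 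Piecewise integration of $P(t)^2$ over the three subintervals should yield $S_{S_1,(1-\lambda)C^1}(\hat G)=\tfrac{9\lambda}{4}$, and hence the $\hat G$-ratio $\tfrac{4+4\lambda}{9\lambda}$; combining with the bound $\tfrac{6}{7\lambda}$ from $E$ in~\eqref{1_E-C_ub} yields the upper bound of the lemma.

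For the lower bound I would compute $S(W^{\hat G}_{\bullet,\bullet};q)$ via~\eqref{eq:SS} at every candidate special point of $\hat G$. Since $N(t)|_{\hat G}$ is supported on $\{q_E,q_F\}$, the integrand $h(\hat G,q,t)$ varies only at these two points, and the different $\Delta_{\hat G}$ consists of the single atom $(1-\lambda)q_{C^1}$. The expected values are
\[
    S(W^{\hat G}_{\bullet,\bullet};q)=\begin{cases}\tfrac{25\lambda}{48}, & q\notin\{q_E,q_F\},\\ \tfrac{7\lambda}{6}, & q=q_E,\\ \tfrac{13\lambda}{12}, & q=q_F,\end{cases}\qquad A_{\hat G,\Delta_{\hat G}}(q)=\begin{cases}1, & q\neq q_{C^1},\\ \lambda, & q=q_{C^1}.\end{cases}
\]
Feeding these into Theorem~\ref{AZ}, together with the ratios from $\hat G$ itself and from $E$, produces $\delta_p\geq\min\{\tfrac{6}{7\lambda},\tfrac{4+4\lambda}{9\lambda},\tfrac{12}{13\lambda},\tfrac{48}{25\lambda},\tfrac{48}{25}\}$; for $\lambda\in(0,1]$ the two terms $\tfrac{12}{13\lambda}$ and $\tfrac{48}{25\lambda}$ are redundant, giving the stated bound. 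The \emph{in particular} clause then reduces to the elementary inequality $\tfrac{48}{25}\geq\min\{\tfrac{6}{7\lambda},\tfrac{4+4\lambda}{9\lambda}\}$ for $\lambda\geq\tfrac{25}{83}$, with the crossover $\tfrac{6}{7\lambda}=\tfrac{4+4\lambda}{9\lambda}$ occurring at $\lambda=\tfrac{13}{14}$.

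The principal technical obstacle is executing the three-regime Zariski decomposition and the corresponding piecewise integrals without arithmetic error; once the breakpoints $t=\lambda,2\lambda$ are correctly identified, everything is mechanical. A secondary point of care is that $\mathrm{ord}_{q_E}(N(t)|_{\hat G})$ (resp.\ $\mathrm{ord}_{q_F}(N(t)|_{\hat G})$) is simply the coefficient of $\hat E$ (resp.\ $\hat F$) in $N(t)$, since $\hat E$ and $\hat F$ each meet $\hat G$ transversely at a single point.
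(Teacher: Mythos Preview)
Your proposal is correct and follows essentially the same approach as the paper: the ordinary blowup at $p$, the same intersection matrix and Zariski decomposition with breakpoints at $t=\lambda$ and $t=2\lambda$, the value $S_{S_1,(1-\lambda)C^1}(\hat G)=\tfrac{9\lambda}{4}$, and the identical auxiliary values $S(W^{\hat G}_{\bullet,\bullet};q)$ and $A_{\hat G,\Delta_{\hat G}}(q)$ that you anticipate. The paper's proof carries out exactly these computations.
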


\begin{proof}
    Let $\sigma_1:\hat{S}_1\rightarrow (S_1,(1-\lambda)C^1)$ be the blowup at $p$ with the exceptional curve $\hat{G}$. This ordinary blowup is the desired plt blowup of $(S_1,(1-\lambda)C_1)$. This can be illustrated as follows:

    \begin{center}
    \begin{tikzpicture}[scale=0.7, every node/.style={scale=0.7}]
        \draw (-4,0) -- (-1,0);
        \draw (-4.3,0) node {$E$};
        \draw (-2.5,-1.5) -- (-2.5,1.5);
        \draw (-2.5,-1.8) node {$F$};
        \draw (-4,-1.5) -- (-1,1.5);
        \draw (-4.3,-1.8) node {$C^1$};
        \filldraw[red] (-2.5,0) circle (2pt);
        \draw[red] (-2.2,-.3) node {$p$};

        \draw[->] (0.5,0) -- (-.5,0);
        \draw (0,.3) node {$\sigma_1$};

        \draw (1,1) -- (4,1);
        \draw (4.3,1) node {$\hat{F}$};
        \draw (1,0) -- (4,0);
        \draw (4.3,0) node {$\hat{C}^1$};
        \draw (1,-1) -- (4,-1);
        \draw (4.3,-1) node {$\hat{E}$};
        \draw[red] (2.5,-1.5) -- (2.5,1.5);
        \draw[red] (2.5,-1.8) node {$\hat{G}$};
        \filldraw (2.5,1) circle (2pt);
        \draw (2.2,1.3) node {$q_F$};
        \filldraw (2.5,0) circle (2pt);
        \draw (2.2,.3) node {$q_{C^1}$};
        \filldraw (2.5,-1) circle (2pt);
        \draw (2.2,-.7) node {$q_E$};
    \end{tikzpicture}
    \end{center}

    The pullbacks of relevant divisors are
    \[
        \sigma_1^*(E) = \hat{E} + \hat{G}, \quad
        \sigma_1^*(F) = \hat{F} + \hat{G}, \quad
        \sigma_1^*(K_{S_1}) = K_{\hat{S}_1} - \hat{G}, \quad
        \sigma_1^*(C^1) = \hat{C}^1 + \hat{G},
    \]
    and so the log discrepancy is
    \[
        A_{S_1, (1 - \lambda)C^1}(\hat{G}) = 1 + \lambda.
    \]

    The intersections are
       \[
        \hat{E}^2 = -2, \quad
        \hat{F}^2 = \hat{G}^2 = -1, \quad
        \hat{E} \cdot \hat{F} = 0, \quad
        \hat{E} \cdot \hat{G} = \hat{F} \cdot \hat{G} = 1.
    \]

    Since $\hat{S}_1$ is a weak del Pezzo surface, it is a Mori dream space, and its Mori cone is spanned by
    $[\hat{E}]$, $[\hat{F}]$, and $[\hat{G}]$.
    We compute
   \[
        \sigma_1^*\left(-K_{S_1} - (1 - \lambda)C^1\right) - t\hat{G}
        \equiv 2\lambda \hat{E} + 3\lambda \hat{F} + (5\lambda - t)\hat{G},
    \]
    which is pseudoeffective only for $t \leq 5\lambda$.
    The Zariski decomposition is given by
  \begin{align*}
        P(t) = \begin{cases}
            2\lambda \hat{E} + 3\lambda \hat{F} + (5\lambda - t)\hat{G} \\
            \frac{5\lambda - t}{2}\hat{E} + 3\lambda \hat{F} + (5\lambda - t)\hat{G} \\
            \frac{5\lambda - t}{2}(\hat{E} + 2\hat{F} + 2\hat{G})
        \end{cases};\quad
        N(t) = \begin{cases}
            0, & \ \ \ 0 \leq t \leq \lambda, \\
            \frac{t - \lambda}{2} \hat{E}, & \ \ \ \lambda \leq t \leq 2\lambda, \\
            \frac{t - \lambda}{2} \hat{E} + (t - 2\lambda) \hat{F}, & \ \ \ 2\lambda \leq t \leq 5\lambda.
        \end{cases}
    \end{align*}
    The volume function is
    \[
        \mathrm{vol}\left(\sigma_1^*(-K_{S_1} - (1 - \lambda)C^1) - t\hat{G}\right) = P(t)^2 =
        \begin{cases}
            8\lambda^2 - t^2, & 0 \leq t \leq \lambda, \\
            -\tfrac{1}{2}t^2 - \lambda t + \tfrac{17}{2}\lambda^2, & \lambda \leq t \leq 2\lambda, \\
            \tfrac{(6\lambda - t)^2}{2}, & 2\lambda \leq t \leq 5\lambda.
        \end{cases}
    \]
    Hence,
    \[
        S_{S_1, (1 - \lambda)C^1}(\hat{G}) = \frac{9\lambda}{4}.
    \]

    Combining inequality~\eqref{1_E-C_ub}, we get the upper bound
    \begin{equation} \label{1_CE_notflex_ub}
        \delta_p(S_1, (1 - \lambda)C^1)
        \leq \min\left\{
            \frac{A_{S_1, (1 - \lambda)C^1}(E)}{S_{S_1, (1 - \lambda)C^1}(E)},
            \frac{A_{S_1, (1 - \lambda)C^1}(\hat{G})}{S_{S_1, (1 - \lambda)C^1}(\hat{G})}
        \right\}
        = \min\left\{\frac{6}{7\lambda}, \frac{4 + 4\lambda}{9\lambda} \right\}.
    \end{equation}

    For each point $q$ on $\hat{G}$, the function $h(\hat{G}, q, t)$ is
    \[
        h(\hat{G}, q, t) =
        \begin{cases}
            \tfrac{1}{2}t^2, & 0 \leq t \leq \lambda, \\
            \tfrac{t + \lambda}{2} \cdot \mathrm{ord}_q\left(\tfrac{t - \lambda}{2} q_E\right) + \tfrac{(t + \lambda)^2}{8}, & \lambda \leq t \leq 2\lambda, \\
            \tfrac{5\lambda - t}{2} \cdot \mathrm{ord}_q\left(\tfrac{t - 3\lambda}{2} q_E + (t - 2\lambda) q_F\right) + \tfrac{(5\lambda - t)^2}{8}, & 2\lambda \leq t \leq 5\lambda.
        \end{cases}
    \]

    This implies that
    \[
        S(W^{\hat{G}}_{\bullet,\bullet}; q) =
        \begin{cases}
            \frac{25\lambda}{48}, & q \ne q_E, q_F, \\
            \frac{13\lambda}{12}, & q = q_F, \\
            \frac{7\lambda}{6}, & q = q_E.
        \end{cases}
    \]

    Setting \( K_{\hat{G}} + \Delta_{\hat{G}} := (K_{\hat{S}_1} + (1 - \lambda)\hat{C} + \hat{G})|_{\hat{G}} \), we find
    \[
        A_{\hat{G}, \Delta_{\hat{G}}}(q) =
        \begin{cases}
            1, & q \ne q_{C^1}, \\
            \lambda, & q = q_{C^1}.
        \end{cases}
    \]

    Then by Theorem~\ref{AZ}, we obtain the lower bound
    \begin{equation} \label{1_CE_notflex_lb}
        \delta_p(S_1, (1 - \lambda)C^1) \geq
        \min\left\{
            \frac{4 + 4\lambda}{9\lambda},\,
            \frac{48}{25\lambda},\,
            \frac{48}{25},\,
            \frac{12}{13\lambda},\,
            \frac{6}{7\lambda}
        \right\}
        = \begin{cases}
            \frac{6}{7\lambda}, & \frac{13}{14} \leq \lambda \leq 1, \\
            \frac{4 + 4\lambda}{9\lambda}, & \frac{25}{83} \leq \lambda \leq \frac{13}{14}, \\
            \frac{48}{25}, & 0 < \lambda \leq \frac{25}{83}.
        \end{cases}
    \end{equation}

    The result follows by combining the bounds \eqref{1_CE_notflex_ub} and \eqref{1_CE_notflex_lb}.
\end{proof}

\begin{lemma}\label{1_CE_flex}
Suppose that  $p$ is the intersection point of $C^1$ and $E$, and  that $C^1$ is tangent to $F$.
Then,
\[
\min\left\{ \frac{6}{7\lambda}, \frac{3 + 6\lambda}{10\lambda}, 3 \right\} \leq \delta_p(S_1, (1 - \lambda)C^1) \leq \min\left\{ \frac{6}{7\lambda}, \frac{3 + 6\lambda}{10\lambda} \right\}.
\]
In particular, for $\lambda \geq \frac{1}{8}$, we have
\[
\delta_p(S_1, (1 - \lambda)C^1) = \min\left\{ \frac{6}{7\lambda}, \frac{3 + 6\lambda}{10\lambda} \right\} = 
\begin{cases}
\frac{6}{7\lambda}, & \frac{13}{14} \leq \lambda \leq 1, \\
\frac{3 + 6\lambda}{10\lambda}, & \frac{1}{8} \leq \lambda \leq \frac{13}{14}.
\end{cases}
\]
\end{lemma}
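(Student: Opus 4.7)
The plan is to construct an appropriate plt blowup $\sigma_1\colon \hat{S}_1 \to S_1$ and then apply Theorem~\ref{AZ}, in the same spirit as the preceding lemmas. Since $p$ lies on $E$, the $A/S$-ratio along $E$ computed in \eqref{1_E-C_ub} already supplies one candidate for the upper bound. The tighter candidate should come from the weighted $(1,2)$-blowup of $S_1$ at $p$ with respect to the $0$-curve $F$, exactly as in Lemma~\ref{1_CF_tangent}: the tangency of $C^1$ to $F$ at $p$ is the key local obstruction, and such a blowup separates $\hat{C}^1$ from $\hat{F}$ on the exceptional curve $\hat{G}$, at the cost of introducing an $\mathrm{A}_1$ singularity $q_0\in\hat{G}$ (the image of the intermediate $(-2)$-curve). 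The new feature, compared with Lemma~\ref{1_CF_tangent}, is that $p$ also lies on $E$ and $E$ is transverse to $F$; consequently the strict transform $\hat{E}$ still meets $\hat{G}$, and in fact passes precisely through the singular point~$q_0$.

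With this setup, the routine bookkeeping records
\[
\sigma_1^\ast E = \hat{E} + \hat{G},\quad \sigma_1^\ast F = \hat{F} + 2\hat{G},\quad \sigma_1^\ast C^1 = \hat{C}^1 + 2\hat{G},\quad \sigma_1^\ast K_{S_1} = K_{\hat{S}_1} - 2\hat{G},
\]
so that $A_{S_1,(1-\lambda)C^1}(\hat{G}) = 1 + 2\lambda$, together with the intersection numbers $\hat{E}^2 = -\tfrac{3}{2}$, $\hat{F}^2 = -2$, $\hat{G}^2 = -\tfrac{1}{2}$, $\hat{E}\cdot\hat{F} = 0$, $\hat{E}\cdot\hat{G} = \tfrac{1}{2}$, $\hat{F}\cdot\hat{G} = 1$. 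Proposition~\ref{MDS} identifies $\overline{NE}(\hat{S}_1)$ as the cone spanned by $[\hat{E}], [\hat{F}], [\hat{G}]$. I then find the Zariski decomposition of $\sigma_1^\ast(-K_{S_1}-(1-\lambda)C^1)-t\hat{G} \equiv 2\lambda\hat{E} + 3\lambda\hat{F} + (8\lambda-t)\hat{G}$: it is nef for $t\in[0,2\lambda]$, and for $t>2\lambda$ both $\hat{E}$ and $\hat{F}$ enter the negative part simultaneously, giving $N(t) = \tfrac{t-2\lambda}{3}\hat{E} + \tfrac{t-2\lambda}{2}\hat{F}$ on $[2\lambda, 8\lambda]$; this is consistent because $\hat{E}\cdot\hat{F}=0$. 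Direct integration should then produce $S_{S_1,(1-\lambda)C^1}(\hat{G}) = \tfrac{10\lambda}{3}$, so that $A/S = \tfrac{3+6\lambda}{10\lambda}$, and combining with \eqref{1_E-C_ub} gives the asserted upper bound.

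For the lower bound via Theorem~\ref{AZ}, I compute $h(\hat{G},q,t)$ and $S(W^{\hat{G}}_{\bullet,\bullet};q)$ at the three distinguished points $q_0, q_F, q_{C^1}$ and at a generic point of $\hat{G}$, using $A_{\hat{G},\Delta_{\hat{G}}}(q_0)=\tfrac{1}{2}$ (from the $\mathrm{A}_1$ singularity, since $\hat{C}^1$ avoids $q_0$), $A_{\hat{G},\Delta_{\hat{G}}}(q_F)=1$, $A_{\hat{G},\Delta_{\hat{G}}}(q_{C^1})=\lambda$, and $A=1$ at a generic point. Together with $A_{S_1,(1-\lambda)C^1}(E)/S_{S_1,(1-\lambda)C^1}(E) = \tfrac{6}{7\lambda}$ and $A(\hat{G})/S(\hat{G}) = \tfrac{3+6\lambda}{10\lambda}$, these should yield the lower bound $\min\{\tfrac{6}{7\lambda},\tfrac{3+6\lambda}{10\lambda},3\}$, and the coincidence of the two bounds for $\lambda \geq \tfrac{1}{8}$ then reduces to elementary comparisons as in the preceding lemmas.

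The main delicate point will be the correct evaluation of $\mathrm{ord}_{q_0}(N(t)|_{\hat{G}})$ at the singular point $q_0$. Since $\hat{E}\cdot\hat{G} = \tfrac{1}{2}$ is supported entirely at $q_0$ (this being the only point where $\hat{E}$ meets $\hat{G}$), while $\hat{F}$ meets $\hat{G}$ only at $q_F\neq q_0$, the contribution of $N(t)$ to the order at $q_0$ should be $\tfrac{1}{2}\cdot\tfrac{t-2\lambda}{3} = \tfrac{t-2\lambda}{6}$, with no contribution from the $\hat{F}$-component. This leads to $S(W^{\hat{G}}_{\bullet,\bullet};q_0) = \tfrac{7\lambda}{12}$ and the striking identity $A_{\hat{G},\Delta_{\hat{G}}}(q_0)/S(W^{\hat{G}}_{\bullet,\bullet};q_0) = \tfrac{6}{7\lambda}$, which exactly matches the value from $E$ itself; the other local ratios $3/\lambda$ (generic), $12/(13\lambda)$ (at $q_F$), and $3$ (at $q_{C^1}$) are all at least $\tfrac{6}{7\lambda}$ in the relevant range, with the value $3$ coming from $q_{C^1}$ being the term that accounts for the third entry in the lower bound and prevents the two bounds from coinciding when $\lambda<\tfrac{1}{8}$.
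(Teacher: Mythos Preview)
Your proposal is correct and follows essentially the same route as the paper's proof: the $(1,2)$-blowup at $p$ with respect to $F$, the same pullback and intersection data, the same Zariski decomposition with $N(t)=\tfrac{t-2\lambda}{3}\hat{E}+\tfrac{t-2\lambda}{2}\hat{F}$ on $[2\lambda,8\lambda]$, and the same refined $S$-values $S(W^{\hat{G}}_{\bullet,\bullet};q_E)=\tfrac{7\lambda}{12}$, $S(W^{\hat{G}}_{\bullet,\bullet};q_F)=\tfrac{13\lambda}{12}$, $S(W^{\hat{G}}_{\bullet,\bullet};q_{C^1})=\tfrac{\lambda}{3}$. Your identification of the $\mathrm{A}_1$ point as $q_E$ (your $q_0$) and the delicate computation $\mathrm{ord}_{q_E}(N(t)|_{\hat G})=\tfrac{t-2\lambda}{6}$ are exactly right; the paper's text contains a typo calling this point $q_F$, but its picture and subsequent formulas agree with you.
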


\begin{proof}
    Define $\sigma_1:\hat{S}_1\rightarrow(S_1,(1-\lambda)C^1)$ as the $(1,2)$-blowup at $p$ with respect to $F$. We see that $q_F$ is an $\mathrm{A}_1$ singularity.
The construction of $\hat{S}_1$ is illustrated below:

    \begin{center}
    \begin{tikzpicture}[scale=0.7, every node/.style={scale=0.7}]
        \draw (-7,2.5) -- (-4,2.5);
        \draw (-4,2.2) node {$E$};
        \draw (-7,1) .. controls (-4,2) and (-4,3) .. (-7,4);
        \draw (-7.3,4) node {$C^1$};
        \draw (-4.75,1) -- (-4.75,4);
        \draw (-4.5,1) node {$F$};
        \filldraw[red] (-4.75,2.5) circle (2pt);
        \draw[red] (-4.45,2.8) node {$p$};

        \draw[->] (-2.5,2.5) -- (-3.5,2.5);
        \draw (-3,2.8) node {$\pi^1_1$};
        \draw[->] (-2.5,-.5) -- (-3.5,.5);
        \draw (-3,-.3) node {$\sigma_1$};

        \draw (-2,1.75) -- (1,1.75);
        \draw (1.25,1.75) node {$E_1$};
        \draw (-2,4) -- (1,2.5);
        \draw (-2.25,4) node {$F_1$};
        \draw (-2,2.5) -- (1,4);
        \draw (-2,2.2) node {$C^1_1$};
        \draw[red] (-.5,1) -- (-.5,4);
        \draw[red] (-.5,4.3) node {$G^1_1$};
        \filldraw[blue] (-.5,3.25) circle (2pt);

        \draw[->] (2.5,2.5) -- (1.5,2.5);
        \draw (2,2.8) node {$\pi^1_2$};
        \draw[->] (2.5,.5) -- (1.5,-.5);
        \draw (2,.3) node {$\tau_1$};

        \draw (3,1.75) -- (6,1.75);
        \draw (6.25,1.75) node {$E_2$};
        \draw (3.5,2.5) -- (3.5,4);
        \draw (3.5,4.3) node {$F_2$};
        \draw (5.5,2.5) -- (5.5,4);
        \draw (5.5,4.3) node {$C^1_2$};
        \draw[red] (4.5,1) -- (4.5,4);
        \draw[red] (4.5,4.3) node {$G^1_2$};
        \draw[blue] (3,3.25) -- (6,3.25);
        \draw[blue] (6.25,3.25) node {$G^2_2$};

        \draw (-.5,-1.5) -- (-.5,.5);
        \draw (-.5,-1.8) node {$\hat{E}$};
        \draw (-1.5,-1.5) -- (-1.5,.5);
        \draw (-1.5,-1.8) node {$\hat{F}$};
        \draw (.5,-1.5) -- (.5,.5);
        \draw (.5,-1.8) node {$\hat{C}^1$};
        \draw[blue] (-2,-.5) -- (1,-.5);
        \draw[blue] (-2,-.2) node {$\hat{G}$};
        \filldraw (-1.5,-.5) circle (2pt);
        \draw (-1.8,-.8) node {$q_F$};
        \filldraw[red] (-.5,-.5) circle (2pt);
        \draw[red] (-.8,-.1) node {$\frac{1}{2}(1,1)$};
        \draw[red] (-.8,-.8) node {$q_E$};
        \filldraw (.5,-.5) circle (2pt);
        \draw (.2,-.8) node {$q_{C^1}$};
    \end{tikzpicture}
    \end{center}

We compute the pullbacks and the log discrepancy as follows:
\[
\sigma_1^*(E) = \hat{E} + \hat{G}, 
\sigma_1^*(F) = \hat{F} + 2\hat{G}, 
\sigma_1^*(C^1) = \hat{C}^1 + 2\hat{G}, 
\sigma_1^*(K_{S_1}) = K_{\hat{S}_1} - 2\hat{G}, \]
\[A_{S_1, (1 - \lambda)C^1}(\hat{G}) = 1 + 2\lambda.
\]

The intersection numbers on $\hat{S}_1$ are
\[
\hat{E}^2 = -\frac{3}{2}, \quad \hat{F}^2 = -2, \quad \hat{G}^2 = -\frac{1}{2}, \quad \hat{E} \cdot \hat{F} = 0, \quad \hat{E} \cdot \hat{G} = \frac{1}{2}, \quad \hat{F} \cdot \hat{G} = 1.
\]
 
Since $T_2$ is a weak del Pezzo surface, $\hat{S}_1$ is a Mori dream space. By Proposition~\ref{MDS}, the Mori cone is generated by 
$[\hat{E}]$, $[\hat{F}]$, $[\hat{G}]$.
We compute
\[
\sigma_1^*\left(-K_{S_1} - (1 - \lambda)C^1\right) - t\hat{G} \equiv 2\lambda\hat{E} + 3\lambda\hat{F} + (8\lambda - t)\hat{G},
\]
which is pseudoeffective for $t \leq 8\lambda$. The Zariski decomposition is given by
\[
\begin{aligned}
P(t) = 
\begin{cases}
2\lambda\hat{E} + 3\lambda\hat{F} + (8\lambda - t)\hat{G} \\
\frac{8\lambda - t}{6}(2\hat{E} + 3\hat{F} + 6\hat{G})
\end{cases};\quad
N(t) = 
\begin{cases}
0 & \ \ \ 0 \leq t \leq 2\lambda, \\
\frac{t - 2\lambda}{3}\hat{E} + \frac{t - 2\lambda}{2}\hat{F} & \ \ \ 2\lambda \leq t \leq 6\lambda.
\end{cases}
\end{aligned}
\]
The volume function becomes
\[
\mathrm{vol}\left(\sigma_1^*(-K_{S_1} - (1 - \lambda)C^1) - t\hat{G}\right) = P(t)^2 = 
\begin{cases}
8\lambda^2 - \frac{1}{2}t^2, & 0 \leq t \leq 2\lambda, \\
\frac{(8\lambda - t)^2}{6}, & 2\lambda \leq t \leq 6\lambda.
\end{cases}
\]
From this we compute
\[
S_{S_1, (1 - \lambda)C^1}(\hat{G}) = \frac{10\lambda}{3},
\]
and hence
\begin{equation}\label{1_CE_flex_ub}
\delta_p(S_1, (1 - \lambda)C^1) \leq \min\left\{ \frac{A_{S_1,(1-\lambda),C^1}(E)}{S_{S_1,(1-\lambda),C^1}(E)}, \frac{A_{S_1,(1-\lambda),C^1}(\hat{G})}{S_{S_1,(1-\lambda),C^1}(\hat{G})} \right\} = \min\left\{ \frac{6}{7\lambda}, \frac{3 + 6\lambda}{10\lambda} \right\}.
\end{equation}

To bound from below, consider $h(\hat{G}, q, t)$ for each $q \in \hat{G}$ that is given by
\[
h(\hat{G}, q, t) =
\begin{cases}
\frac{1}{8}t^2, & 0 \leq t \leq 2\lambda, \\
\frac{8\lambda - t}{6}\cdot \mathrm{ord}_q\left( \frac{t - 2\lambda}{6}q_E + \frac{t - 2\lambda}{2}q_F \right) + \frac{(8\lambda - t)^2}{72}, & 2\lambda \leq t \leq 6\lambda.
\end{cases}
\]

It follows that
\[
S(W^{\hat{G}}_{\bullet,\bullet}; q) = 
\begin{cases}
\frac{\lambda}{3}, & q \neq q_E, q_F, \\
\frac{7\lambda}{12}, & q = q_E, \\
\frac{13\lambda}{12}, & q = q_F.
\end{cases}
\]

Let $K_{\hat{G}} + \Delta_{\hat{G}} := (K_{\hat{S}_1} + (1 - \lambda)\hat{C} + \hat{G})|_{\hat{G}}$. Then,
\[
A_{\hat{G}, \Delta_{\hat{G}}}(q) =
\begin{cases}
1, & q \neq q_E, q_{C^1}, \\
\lambda, & q = q_{C^1}, \\
\frac{1}{2}, & q = q_E.
\end{cases}
\]

By Theorem~\ref{AZ}, we then obtain the lower bound
\begin{equation}\label{1_CE_flex_lb}
\delta_p(S_1, (1 - \lambda)C^1) \geq \min\left\{ \frac{3 + 5\lambda}{10\lambda}, \frac{3}{\lambda}, 3, \frac{12}{13\lambda}, \frac{6}{7\lambda} \right\} = 
\begin{cases}
\frac{6}{7\lambda}, & \frac{13}{14} \leq \lambda \leq 1, \\
\frac{3 + 6\lambda}{10\lambda}, & \frac{1}{8} \leq \lambda \leq \frac{13}{14}, \\
\frac{48}{17}, & 0 < \lambda \leq \frac{5}{26}.
\end{cases}
\end{equation}
Combining \eqref{1_CE_flex_ub} and \eqref{1_CE_flex_lb} concludes the proof.
\end{proof}

Note that $\phi_1(C^1)\setminus \{x\}$ contains at least eight inflection points,
and
that  at least three $0$-curves in $S_1$ are tangent to $C^1$ outside $E$. 
Thus, Lemmas~\ref{1_S-C}, \ref{1_C-E_notflex}, \ref{1_C-E_flex}, and \ref{1_CF_tangent} give
\begin{equation}\label{1-semitotal}
    \inf_{p\in S_1 \setminus (C^1\cap E)}\delta_p(S_1,(1-\lambda)C^1)\begin{cases}
            =\frac{6}{7\lambda}, &\frac{11}{14}\leq\lambda\leq 1,\\
            =\frac{1+2\lambda}{3\lambda}, &\frac{7}{22}\leq\lambda\leq\frac{11}{14},\\
            =\frac{12+36\lambda}{43\lambda}, &\frac{25}{97}\leq\lambda\leq\frac{7}{22},\\
            \geq\frac{48}{25}, &0<\lambda\leq\frac{25}{97}.
    \end{cases}
\end{equation}
We now distinguish two cases depending on whether $x$ is an inflection point of $\phi_1(C^1)$.
 If $x$ is an inflection point, then  by applying \eqref{local delta} we obtain 
\[
    \delta(S_1,(1-\lambda)C^1)\begin{cases}
        =\frac{6}{7\lambda}, &\frac{13}{14}\leq\lambda\leq1,\\
        =\frac{3+6\lambda}{10\lambda}, &\frac{5}{22}\leq\lambda\leq\frac{13}{14},\\
        \geq\frac{48}{25}, &0<\lambda\leq\frac{5}{22}
    \end{cases}
\]
from Lemma~\ref{1_CE_flex} and \eqref{1-semitotal}.
Thus, the first part of Theorem~\ref{thm1} follows directly. 
If $x$ is not an inflection point, then it follows from Lemma~\ref{1_CE_notflex} and \eqref{1-semitotal} that
\[
    \delta(S_1,(1-\lambda)C^1)\begin{cases}
        =\frac{6}{7\lambda}, &\frac{13}{14}\leq\lambda\leq1,\\
        =\frac{4+4\lambda}{9\lambda}, &\frac{1}{2}\leq\lambda\leq\frac{13}{14},\\
        =\frac{1+2\lambda}{3\lambda}, &\frac{7}{22}\leq\lambda\leq\frac{1}{2},\\
        =\frac{12+36\lambda}{43\lambda}, &\frac{25}{97}\leq\lambda\leq\frac{7}{22},\\
        \geq\frac{48}{25}, &0<\lambda\leq\frac{25}{97}.
    \end{cases}
\]
This establishes the second part of Theorem~\ref{thm1}.

\subsection{Proof of Theorem~\ref{thm2}} We now consider the surface $S_2$ and a smooth anticanonical curve $C^2$ on $S_2$. We now let $p$ be a point in $S_2$.
We always denote by $B$ the strict transform of the line determined by $x_1$ and $x_2$ on $\mathbb{P}^2$ via $\phi_2$. The surface $S_2$ has only three $(-1)$-curves $A^1$, $A^2$, and $B$.

The pencil $|\phi_2^\ast\mathcal{O}_{\mathbb{P}^2}(1)-A^i|$ for each $i$ is  base point free. There is a unique divisor in the pencil passing through the point $p$ in $S_2$, which will be denoted by $N^i$. Note that $N^i$ is a smooth curve if $p$ is not contained in $A^{3-i}\cup B$.

Consider the $(-1)$-curve $A^1$. 
We have
    \[
        -K_{S_2}-(1-\lambda)C^2-tA^1\equiv(2\lambda-t)A^1+2\lambda A^2+3\lambda B
    \]
    and it is pseudoeffective only for $t\leq 2\lambda$. Its Zariski decomposition is given by
     \begin{align*}
        P(t)=\begin{cases}
            (2\lambda-t)A^1+2\lambda A^2+3\lambda B\\
            (2\lambda-t)A^1+2\lambda A^2+(4\lambda-t)B
        \end{cases};\quad
        N(t)=\begin{cases}
            0, &\ \ \ 0\leq t\leq\lambda,\\
            (t-\lambda)B, &\ \ \ \lambda\leq t\leq2\lambda.
        \end{cases}
    \end{align*}
    Then,
    \[
        \mathrm{vol}\left(-K_{S_2}-(1-\lambda)C^2-tA^1\right)=P(t)^2=\begin{cases}
            7\lambda^2-2\lambda t-t^2, &0\leq t\leq\lambda,\\
            8\lambda^2-4\lambda t, &\lambda\leq t\leq2\lambda,
        \end{cases}
    \]
    and hence,
    \[
        S_{S_2,(1-\lambda)C^2}(A^1)=\frac{23\lambda}{21}.
    \]
    Therefore, if $p$ is on $A^1$, then 
    \begin{equation}\label{2_Ei-CF_ub}
        \delta_p(S_2,(1-\lambda)C^2)\leq\frac{21}{23\lambda}.
    \end{equation}
    By the same computation, we also obtain the same upper bound for points in $A^2$.

    We now consider the $(-1)$-curve $B$. We have
    \[
        -K_{S_2}-(1-\lambda)C^2-tB\equiv2\lambda A^1+2\lambda A^2+(3\lambda-t)B
    \]
    and it is pseudoeffective only for $t\leq3\lambda$. The Zariski decomposition of the divisor is as follows:
       \begin{align*}
        P(t)=\begin{cases}
            2\lambda A^1+2\lambda A^2+(3\lambda-t)B \\
            (3\lambda-t)(A^1+A^2+B)
        \end{cases};\quad
        N(t)&=\begin{cases}
            0 &\ \ \ 0\leq t\leq\lambda,\\
            (t-\lambda)(A^1+A^2) &\ \ \ \lambda\leq t\leq3\lambda.
        \end{cases}
    \end{align*}
    Then,
    \[
        \mathrm{vol}\left(-K_{S_2}-(1-\lambda)C^2-tB\right)=P(t)^2=\begin{cases}
            7\lambda^2-2\lambda t-t^2, &0\leq t\leq\lambda,\\
            (3\lambda-t)^2, &\lambda\leq t\leq3\lambda,
        \end{cases}
    \]
    and hence,
    \[
        S_{S_2,(1-\lambda)C^2}(B)=\frac{25\lambda}{21}.
    \]
Therefore, if $p$ is on $B$, then 
    \begin{equation}\label{2_F-C_ub}
        \delta_p(S_2,(1-\lambda)C^2)\leq\frac{21}{25\lambda}.
    \end{equation}

\begin{lemma}\label{2_S-C}
    Suppose that $p$ is in $S_2\setminus C^2$. Then
    \[
        \delta_p(S_2,(1-\lambda)C^2)\left\{\aligned
        &\geq\frac{21}{23\lambda}, &p\not\in A^1\cup A^2\cup B,\\
        &=\frac{21}{23\lambda}, &p\in A^1\cup A^2\setminus B,\\
        &=\frac{21}{25\lambda}, &p\in B.\endaligned\right.
    \]
\end{lemma}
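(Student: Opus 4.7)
The plan is to apply Theorem~\ref{AZ} three times, with a different choice of prime divisor $G$ on $S_2$ (and the birational morphism taken to be the identity). The upper bounds $\delta_p\le\tfrac{21}{23\lambda}$ for $p\in A^1\cup A^2\setminus B$ and $\delta_p\le\tfrac{21}{25\lambda}$ for $p\in B$ are already supplied by~\eqref{2_Ei-CF_ub} and~\eqref{2_F-C_ub}, so only the matching lower bounds need to be proved.

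For $p\in A^i\setminus B$ I would take $G=A^i$; the pair $(S_2,(1-\lambda)C^2+A^i)$ is plt because $C^2\cdot A^i=1$ and the intersection is transverse. Using the Zariski decomposition of $-K_{S_2}-(1-\lambda)C^2-tA^i$ recorded just above the lemma, the negative part $(t-\lambda)B$ meets $A^i$ only at $q_B:=A^i\cap B$, which is excluded in the present case. A direct integration then produces $S(W^{A^i}_{\bullet,\bullet};p)=\tfrac{19\lambda}{21}$; combined with $A_{A^i,\Delta_{A^i}}(p)=1$ (because $p\notin C^2$), Theorem~\ref{AZ} yields $\delta_p\ge\min\bigl\{\tfrac{21}{23\lambda},\tfrac{21}{19\lambda}\bigr\}=\tfrac{21}{23\lambda}$. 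For $p\in B$ I would similarly take $G=B$; then the negative part $N(t)=(t-\lambda)(A^1+A^2)$ restricts to the two intersection points $q_i:=A^i\cap B$, and integration yields $S(W^B_{\bullet,\bullet};q)=\tfrac{5\lambda}{7}$ away from $q_1,q_2$ and $\tfrac{23\lambda}{21}$ at those two points. Combined with $A_{B,\Delta_B}(p)=1$, Theorem~\ref{AZ} produces $\delta_p\ge\min\bigl\{\tfrac{21}{25\lambda},\tfrac{7}{5\lambda},\tfrac{21}{23\lambda}\bigr\}=\tfrac{21}{25\lambda}$.

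For $p\notin A^1\cup A^2\cup B$ I would take $G=N^1$, the smooth irreducible member of the pencil $|\phi_2^{\ast}\mathcal{O}_{\mathbb{P}^2}(1)-A^1|$ through $p$ (irreducibility holds because the unique reducible member is $A^2+B$, which $p$ avoids). Since $N^1\equiv A^2+B$, the computation reads $-K_{S_2}-(1-\lambda)C^2-tN^1\equiv 2\lambda A^1+(2\lambda-t)A^2+(3\lambda-t)B$, whose pseudoeffective threshold is $\tau=2\lambda$ because $\overline{NE}(S_2)=\mathrm{Cone}\{[A^1],[A^2],[B]\}$. The Zariski decomposition is $N(t)=0$ on $[0,\lambda]$ and $N(t)=(t-\lambda)A^1$ on $[\lambda,2\lambda]$, and a routine integration gives $S_{S_2,(1-\lambda)C^2}(N^1)=\tfrac{19\lambda}{21}$. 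Since $p\notin A^1$, the restriction $N(t)|_{N^1}$ vanishes at $p$, so $S(W^{N^1}_{\bullet,\bullet};p)=\tfrac{19\lambda}{21}$ as well, and Theorem~\ref{AZ} delivers $\delta_p\ge\tfrac{21}{19\lambda}\ge\tfrac{21}{23\lambda}$. The main bookkeeping task is the Zariski decomposition of $-K_{S_2}-(1-\lambda)C^2-tN^1$, but this is immediate once one observes that $\overline{NE}(S_2)$ is spanned by the three $(-1)$-curves; no genuinely hard step arises.
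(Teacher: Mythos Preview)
Your argument is correct. For $p\in A^i\setminus B$ and $p\in B$ you follow exactly the paper's approach, using $G=A^i$ and $G=B$ respectively; your value $S(W^B_{\bullet,\bullet};q_i)=\tfrac{23\lambda}{21}$ is in fact right (the paper records $\tfrac{19\lambda}{21}$, a harmless typo since either value exceeds $\tfrac{21}{25\lambda}$ in the denominator comparison).

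For $p\notin A^1\cup A^2\cup B$ the paper instead takes $G=L$, a smooth member of $|A^1+A^2+B|$ through $p$, and obtains $S_{S_2,(1-\lambda)C^2}(L)=\tfrac{5\lambda}{7}$ together with $S(W^L_{\bullet,\bullet};p)=\tfrac{23\lambda}{21}$, yielding the lower bound $\delta_p\geq\min\{\tfrac{7}{5\lambda},\tfrac{21}{23\lambda}\}=\tfrac{21}{23\lambda}$ on the nose. Your choice $G=N^1$ is equally legitimate and in fact produces the slightly sharper bound $\delta_p\geq\tfrac{21}{19\lambda}>\tfrac{21}{23\lambda}$; the paper's choice has the minor conceptual advantage that the bound it delivers is exactly the stated one, matching the value achieved on $A^i\setminus B$, whereas your bound overshoots. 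Either way the lemma follows.
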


\begin{proof}
    Suppose that $p$ is not contained in $A^1\cup A^2\cup B\cup C^2$. Take a smooth curve $L$ in $|A^1+A^2+B|$ that contains $p$. We have
    \[
        -K_{S_2}-(1-\lambda)C^2-tL\equiv(2\lambda-t)A^1+(2\lambda-t)A^2+(3\lambda-t)B
    \]
    and it is pseudoeffective only for $t\leq 2\lambda$. Its Zariski decomposition is
         \begin{align*}
        P(t)&=\begin{cases}
            (2\lambda-t)A^1+(2\lambda-t)A^2+(3\lambda-t)B\\
            (2\lambda-t)(A^1+A^2+2B)
        \end{cases};\quad
        N(t)&=\begin{cases}
            0, & \ \ \ 0\leq t\leq\lambda,\\
            (t-\lambda)B, &\ \ \ \lambda\leq t\leq2\lambda.
        \end{cases}
    \end{align*}
    Thus, the volume function is given by
    \[
        \mathrm{vol}\left(-K_{S_2}-(1-\lambda)C^2-tL\right)=P(t)^2=\begin{cases}
            t^2-6\lambda t+7\lambda^2, &0\leq t\leq\lambda,\\
            2(2\lambda-t)^2, &\lambda\leq t\leq2\lambda,
        \end{cases}
    \]
    and hence,
    \[
        S_{S_2,(1-\lambda)C^2}(L)=\frac{5\lambda}{7}.
    \]

    Since $p$ does not lie on $B$, we have
    \[
        h(L,p,t)=\begin{cases}
            \frac{(3\lambda-t)^2}{2}, &0\leq t\leq\lambda,\\
            2(2\lambda-t)^2, &\lambda\leq t\leq 2\lambda,
        \end{cases}
    \]
    and hence,
    \[
        S(W^L_{\bullet,\bullet};p)=\frac{23\lambda}{21}.
    \]
    Put $K_L+\Delta_L:=(K_{S_2}+(1-\lambda)C^2+L)|_L$, then $A_{L,\Delta_L}(p)=1$ since $p$ is outside $C^2$. It then follows from Theorem~\ref{AZ} that
    \[
        \delta_p(S_2,(1-\lambda)C^2)\geq\min\left\{\frac{7}{5\lambda},\frac{21}{23\lambda}\right\}=\frac{21}{23}\lambda,
    \]
    and this completes the proof when $p$ is not in $A^1\cup A^2\cup B$.
    
    Next, suppose that $p$ is on $A^1\setminus B$, the integrand in \eqref{eq:SS} is
    \[
        h(A^1,p,t)=\begin{cases}
            \frac{(t+\lambda)^2}{2}, &0\leq t\leq\lambda,\\
            2\lambda^2, &\lambda\leq t\leq2\lambda,
        \end{cases}
    \]
    and hence,
    \[
        S(W^{A^1}_{\bullet,\bullet};p)=\frac{19\lambda}{21}.
    \]
    Put $K_{A^1}+\Delta_{A^1}:=(K_{S_2}+(1-\lambda)C^2+A^1)|_{A^1}$, then $A_{A^1,\Delta_{A^1}}(p)=1$ because $p$ is not in~$C^2$. Theorem~\ref{AZ} then implies that
    \begin{equation}\label{2_Ei-CF_lb}
        \delta_p(S_2,(1-\lambda)C^2)\geq\min\left\{\frac{21}{23\lambda},\frac{21}{19\lambda}\right\}=\frac{21}{23\lambda}.   
    \end{equation}
    Consequently, \eqref{2_Ei-CF_ub} and \eqref{2_Ei-CF_lb} complete the proof for the case when $p$ is on $A^1\setminus B$. The same holds when $p$ is on $A^2\setminus B$.

    Finally, suppose that $p$ is on $B\setminus C^2$. We compute
    \[
        h(B,p,t)=\begin{cases}
            \frac{(\lambda+t)^2}{2}, &0\leq t\leq\lambda,\\
            (3\lambda-t)\cdot\mathrm{ord}_p(t-\lambda)(p_1+p_2)+\frac{(3\lambda-t)^2}{2}, &\lambda\leq t\leq 3\lambda,
        \end{cases}
    \]
    which implies that
    \[
        S(W^B_{\bullet,\bullet};p)=\begin{cases}
            \frac{5\lambda}{7}, &p\neq p_1,p_2,\\
            \frac{19\lambda}{21}, &p=p_1, p_2.
        \end{cases}
    \]
    Here,  $p_1$ (resp. $p_2$) is the intersection point of $B$ and $A^1$ (resp. $A^2$).
    
    Put $K_B+\Delta_B:=(K_{S_2}+(1-\lambda)C^2+B)|_B$, then $A_{B,\Delta_B}(p)=1$ because $p$ is  not in $C^2$. Theorem~\ref{AZ} then implies that
    \begin{equation}\label{2_F-C_lb}
        \delta_p(S_2,(1-\lambda)C^2)\geq\left\{\aligned
            &\min\left\{\frac{21}{25\lambda},\frac{7}{5\lambda}\right\}, &p\neq p_1,p_2,\\
            &\min\left\{\frac{21}{25\lambda},\frac{19}{21\lambda}\right\}, &p=p_1,p_2.
        \endaligned\right\}=\frac{21}{25\lambda}.
    \end{equation}
    Consequently, \eqref{2_Ei-CF_ub} and \eqref{2_F-C_lb} complete the proof for the case when $p$ is on $B$.
\end{proof}

\begin{lemma}\label{2_C-AB_notflex}
    Suppose that $p$ is a point in $C^2\setminus A^1\cup A^2\cup B$ such that each $N^i$ is transverse to $C^2$ at $p$ and $\phi_2(p)$ is not an inflection point of the smooth cubic curve $\phi_2(C^2)$. Then, $$\frac{21+42\lambda}{53\lambda}\geq\delta_p(S_2,(1-\lambda)C^2)\geq\min\left\{\frac{21}{23\lambda},\frac{21+42\lambda}{53\lambda},\frac{42}{23}\right\}=\begin{cases}
        \frac{21}{23\lambda}, &\frac{15}{23}\leq\lambda\leq 1,\\
        \frac{21+42\lambda}{53\lambda}, &\frac{23}{63}\leq\lambda\leq\frac{15}{23},\\
        \frac{42}{23}, &0<\lambda\leq\frac{23}{63}.
    \end{cases}$$
\end{lemma}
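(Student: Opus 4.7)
The plan is to follow the strategy of Lemma~\ref{1_C-E_notflex}, constructing an appropriate plt blowup of $(S_2,(1-\lambda)C^2)$ centered at $p$ and then applying Theorem~\ref{AZ}. Let $L$ be the pullback under $\phi_2$ of the tangent line to $\phi_2(C^2)$ at $\phi_2(p)$. Since $N^1$ and $N^2$ are transverse to $C^2$ at $p$, this tangent line avoids $x_1$ and $x_2$, so $L$ is an irreducible smooth rational curve numerically equivalent to $A^1+A^2+B$, and the hypothesis that $\phi_2(p)$ is not an inflection point guarantees that $L$ meets $C^2$ at $p$ with multiplicity exactly two. Let $\sigma_2:\hat{S}_2\to S_2$ be the $(1,2)$-blowup at $p$ with respect to $L$, with exceptional divisor $\hat{M}$. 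Because $p\notin A^1\cup A^2\cup B$, the strict transforms $\hat{A}^i$ and $\hat{B}$ coincide with the pullbacks, and $\hat{S}_2$ carries a single $\mathrm{A}_1$ singularity at a point $q_0\in\hat{M}$ that is distinct from both $q_L$ and $q_{C^2}$.

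Next I will record the key intersection numbers on $\hat{S}_2$, including $\hat{M}^2=-\tfrac{1}{2}$, $\hat{L}^2=-1$, $\hat{L}\cdot\hat{M}=1$, $\hat{B}\cdot\hat{L}=1$, together with the log discrepancy $A_{S_2,(1-\lambda)C^2}(\hat{M})=1+2\lambda$. Since the intermediate surface $T^2_2$ is a weak del Pezzo surface, Proposition~\ref{MDS} ensures that $\hat{S}_2$ is a Mori dream space whose Mori cone is generated by $[\hat{A}^1]$, $[\hat{A}^2]$, $[\hat{B}]$, $[\hat{L}]$, $[\hat{M}]$.

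The heart of the argument is the Zariski decomposition of
\[
D(t):=\sigma_2^*(-K_{S_2}-(1-\lambda)C^2)-t\hat{M}\equiv 2\lambda\hat{A}^1+2\lambda\hat{A}^2+3\lambda\hat{B}-t\hat{M}.
\]
Intersecting $D(t)$ against the Mori-cone generators, I expect $D(t)$ to be nef for $0\leq t\leq 3\lambda$, and for $3\lambda\leq t\leq 4\lambda$ the negative part to be $(t-3\lambda)\hat{L}$. The pseudoeffective threshold will be $\tau=4\lambda$, which I will pin down using the nef class $\hat{L}+\hat{B}$ of self-intersection $0$: a direct check gives $D(t)\cdot(\hat{L}+\hat{B})=4\lambda-t$, so nefness of $\hat{L}+\hat{B}$ forces $D(t)$ to fail to be pseudoeffective for $t>4\lambda$. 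A routine integration then yields $S_{S_2,(1-\lambda)C^2}(\hat{M})=53\lambda/21$, producing the upper bound $(21+42\lambda)/(53\lambda)$ for $\delta_p$.

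For the lower bound I apply Theorem~\ref{AZ}. Using the above decomposition I compute $h(\hat{M},q,t)$ and hence $S(W^{\hat{M}}_{\bullet,\bullet};q)$ at the distinguished points $q_L$, $q_0$, $q_{C^2}$ and at a generic $q$, obtaining the value $5\lambda/7$ at $q_L$ and $23\lambda/42$ at the other three. Combined with the adjunction data $A_{\hat{M},\Delta_{\hat{M}}}(q_{C^2})=\lambda$, $A_{\hat{M},\Delta_{\hat{M}}}(q_0)=\tfrac{1}{2}$ (from the $\mathrm{A}_1$ singularity), and $A_{\hat{M},\Delta_{\hat{M}}}(q)=1$ otherwise, Theorem~\ref{AZ} delivers
\[
\delta_p(S_2,(1-\lambda)C^2)\geq\min\Bigl\{\tfrac{21+42\lambda}{53\lambda},\ \tfrac{21}{23\lambda},\ \tfrac{42}{23}\Bigr\},
\]
from which the piecewise description follows. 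The main obstacle will be locating the pseudoeffective threshold correctly: because $(\hat{L}+\hat{B})^2=0$, the standard curve-by-curve algorithm for the Zariski decomposition stalls when $P(t)\cdot\hat{B}$ first vanishes at $t=4\lambda$, and one must recognize the square-zero nef class $\hat{L}+\hat{B}$ as the obstruction cutting off the pseudoeffective cone rather than attempting to enlarge the support of the negative part.
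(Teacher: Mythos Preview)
Your proposal is correct and follows essentially the same approach as the paper: the same $(1,2)$-blowup along the tangent line $L$, the same Zariski decomposition with $N(t)=(t-3\lambda)\hat L$ on $[3\lambda,4\lambda]$, and the same application of Theorem~\ref{AZ}. One point to correct: your list of Mori-cone generators is incomplete. On $T^2_2$ the strict transforms $N^1_2$ and $N^2_2$ are $(-1)$-curves (each $N^i$ has self-intersection $0$ on $S_2$ and passes once through $p$), so Proposition~\ref{MDS} gives $\overline{NE}(\hat S_2)=\mathrm{Cone}\{[\hat A^1],[\hat A^2],[\hat B],[\hat N^1],[\hat N^2],[\hat L],[\hat M]\}$; indeed $\hat N^1\equiv\hat A^2+\hat B-\hat M$ is not a nonnegative combination of your five classes. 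This omission is harmless here because $P(t)\cdot\hat N^i=2\lambda-t/2\geq 0$ on $[0,4\lambda]$, so the missing generators never enter the negative part, but you should check them explicitly. Your alternative device for locating $\tau=4\lambda$ via the square-zero nef class $\hat L+\hat B$ is a nice touch and equivalent to the paper's method of rewriting $D(t)$ as $\tfrac{4\lambda-t}{2}(\hat A^1+\hat A^2)+\tfrac{6\lambda-t}{2}\hat B+\tfrac{t}{2}\hat L$.
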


\begin{proof}
    Let $L$ be the strict transform of the tangent line of $\phi_2(C^2)$ at $\phi_2(p)$. By the assumption, it belongs to the linear system $|A^1+A^2+B|$.
    Let $\sigma_2:\hat{S}_2\rightarrow (S_2,(1-\lambda)C^2)$ be the $(1,2)$-blowup with respect to $L$. Note that $q_{N^1}=q_{N^2}$ and it is an $\mathrm{A}_1$ singularity. We denote this point by $q_N$. This can be illustrated as follows:

    \begin{center}
    \begin{tikzpicture}[scale=0.7, every node/.style={scale=0.7}]
        \draw (-8.25,2.5) -- (-3.75,2.5);
        \draw (-3.5,2.5) node {$A^1$};
        \draw (-8.25,5.5) -- (-3.75,5.5);
        \draw (-3.5,5.5) node {$A^2$};
        \draw (-5,2) -- (-5,6);
        \draw (-5,6.3) node {$B$};
        \draw (-20/3,2) -- (-20/3,5);
        \draw (-20/3,1.7) node {$N^1$};
        \draw (-22/3,6) -- (-6,10/3);
        \draw (-22/3,6.3) node {$N^2$};
        \draw (-8,2) .. controls (-20/3,28/3) and (-16/3,-4/3) .. (-4,6);
        \draw (-4,6.3) node {$C^2$};
        \draw (-22/3,5) -- (-4,10/3);
        \draw (-3.75,10/3) node {$L$};
        \filldraw[red] (-20/3,14/3) circle (2pt);
        \draw[red] (-19/3,4.9) node {$p$};

        \draw[->] (-2.5,4) --(-3.5,4);
        \draw (-3,4.3) node {$\pi^2_1$};
        \draw[->] (-2.5,.5) -- (-3.5,1.5);
        \draw (-3.2,.8) node {$\sigma_2$};

        \draw (-2.25,2.5) -- (2.25,2.5);
        \draw (2.55,2.5) node {$A^1_1$};
        \draw (-2.25,5.5) -- (2.25,5.5);
        \draw (2.55,5.5) node {$A^2_1$};
        \draw (1,2) -- (1,6);
        \draw (1,6.3) node {$B_1$};
        \draw (-1,2) -- (.25,3.25);
        \draw (-1,1.7) node {$N^1_1$};
        \draw (-1,6) -- (.25,4.75);
        \draw (-1,6.3) node {$N^2_1$};
        \draw (-2,2) .. controls (-2/3,28/3) and (2/3,-4/3) .. (2,6);
        \draw (2,6.3) node {$C^2_1$};
        \draw (-2/3,4) -- (2,4);
        \draw (2,3.7) node {$L_1$};
        \draw[red] (0,2.75) -- (0,5.25);
        \draw[red] (.4,2.8) node {$M^1_1$};
        \filldraw[blue] (0,4) circle (2pt);

        \draw[->] (3.5,4) -- (2.5,4);
        \draw (3,4.3) node {$\pi^2_2$};
        \draw[->] (3.5,1.5) -- (2.5,.5);
        \draw (3.2,.8) node {$\tau_2$};

        \draw (3.75,2.5) -- (8.25,2.5);
        \draw (8.55,2.5) node {$A^1_2$};
        \draw (3.75,5.5) -- (8.25,5.5);
        \draw (8.55,5.5) node {$A^2_2$};
        \draw (7,2) -- (7,6);
        \draw (7,6.3) node {$B_2$};
        \draw (7.25,2) -- (6.25,6);
        \draw (6.25,6.3) node {$C^2_2$};
        \draw (7.5,5) -- (5.5,3);
        \draw (7.8,4.7) node {$L_2$};
        \draw (5.5,3.75) -- (4,2);
        \draw (4,1.7) node {$N^1_2$};
        \draw (5.5,4.25) -- (4,6);
        \draw (4,6.3) node {$N^2_2$};
        \draw[red] (5,5) -- (5,3);
        \draw[red] (5.5,5) node {$M^1_2$};
        \draw[blue] (6.5,3.25) -- (4,5);
        \draw[blue] (3.7,4.7) node {$M^2_2$};

        \draw (-2.25,-2.5) -- (2.25,-2.5);
        \draw (2.5,.-2.5) node {$\hat{A}^1$};  
        \draw (-2.25,.5) -- (2.25,.5);
        \draw (2.25,.8) node {$\hat{A}^2$};
        \draw (1.5,-3) -- (1.5,1);
        \draw (1.5,-3.3) node {$\hat{B}$};
        \draw (1.75,-3) -- (.75,1);
        \draw (.45,1) node {$\hat{C}^2$};
        \draw (-1,-3) -- (-2,0);
        \draw (-1,-3.3) node {$\hat{N}^1$};
        \draw (-1,1) -- (-2,-2);
        \draw (-.7,1) node {$\hat{N}^2$};
        \draw (-1,-1.5) -- (2,-.5);
        \draw (2.3,-.5) node {$\hat{L}$};
        \draw[blue] (-2.5,-1) -- (1,-1);
        \draw[blue] (-2.8,-1) node {$\hat{M}$};
        \filldraw[red] (-5/3,-1) circle (2pt);
        \draw[red] (-2.1,-.7) node {$q_N$};
        \draw[red] (-.8,-.7) node {$\frac{1}{2}(1,1)$};
        \filldraw (.5,-1) circle (2pt);
        \draw (.5,-.7) node {$q_L$};
    \end{tikzpicture}
    \end{center}

    We then obtain the following numerical equivalences
    \[
        \hat{N}^1\equiv\hat{A}^2+\hat{B}-\hat{M},\quad
        \hat{N}^2\equiv\hat{A}^1+\hat{B}-\hat{M},\quad
        \hat{L}\equiv\hat{A}^1+\hat{A}^2+\hat{B}-2\hat{M}.
    \]
    The pullbacks and the log discrepancy are given by
    \[
    \begin{split}
        &\sigma_2^\ast A^i=\hat{A}^i,\quad\sigma_2^\ast B=\hat{B}, \quad\sigma_2^\ast L=\hat{L}+2\hat{M},\sigma_2^\ast N^i=\hat{N}^i+\hat{M},\\
        &\sigma_2^\ast K_{S_2}=K_{\hat{S}_2}-2\hat{M},\quad\sigma_2^\ast C^2=\hat{C}^2+2\hat{M},\quad\text{for }i=1,2,
    \end{split}
    \]
    \[
        A_{S_2,(1-\lambda)C^2}(\hat{M})=1+2\lambda.
    \]

    We have the intersections on $\hat{S}_2$ as follows:
    \begin{longtable}{cccccccc}
            & $\hat{A}^1$ & $\hat{A}^2$ & $\hat{B}$ & $\hat{N}^1$    & $\hat{N}^2$    & $\hat{L}$      & $\hat{M}$ \\ 
            $\hat{A}^1$ & $-1$        & $0$         & $1$       & $1$            & $0$            & $0$            & $0$            \\
            $\hat{A}^2$ & $0$         & $-1$        & $1$       & $0$            & $1$            & $0$            & $0$            \\ 
            $\hat{B}$   & $1$         & $1$         & $-1$      & $0$            & $0$            & $1$            & $0$            \\ 
            $\hat{N}^1$ & $1$         & $0$         & $0$       & $-\frac{1}{2}$ & $\frac{1}{2}$  & $0$            & $\frac{1}{2}$  \\ 
            $\hat{N}^2$ & $0$         & $1$         & $0$       & $\frac{1}{2}$  & $-\frac{1}{2}$ & $0$            & $\frac{1}{2}$  \\ 
            $\hat{L}$   & $0$         & $0$         & $1$       & $0$            & $0$            & $-1$           & $1$            \\ 
            $\hat{M}$   & $0$         & $0$         & $0$       & $\frac{1}{2}$  & $\frac{1}{2}$  & $1$            & $-\frac{1}{2}$ \\
    \end{longtable}

    Since $T_2$ is a weak del Pezzo surface, Proposition~\ref{MDS} implies that $\hat{S}_2$ is also a Mori dream space, and its Mori cone is
    \[
        \overline{NE}(\hat{S}_2)=\mathrm{Cone}\{[\hat{A}^1],[\hat{A}^2],[\hat{B}],[\hat{N}^1],[\hat{N}^2],[\hat{L}],[\hat{M}]\}.
    \]
    We have the numerical equivalence
    \[
    \begin{split}
        \sigma_2^\ast\left(-K_{S_2}-(1-\lambda)C^2\right)-t\hat{M}&\equiv2\lambda\hat{A}^1+2\lambda\hat{A}^2+3\lambda\hat{B}-t\hat{M}\\
        &\equiv\frac{4-\lambda}{2}(\hat{A}^2+\hat{A}^2)+\frac{6-\lambda}{2}\hat{B}+\frac{t}{2}\hat{L},
    \end{split}
    \]
    and this divisor is pseudoeffective only for $t\leq 4\lambda$. Its Zariski decomposition is given by
       \begin{align*}
        P(t)=\begin{cases}
            \frac{4\lambda-t}{2}(\hat{A}^1+\hat{A}^2)+\frac{6\lambda-t}{2}\hat{B}+\frac{t}{2}\hat{L}\\
            \frac{4\lambda-t}{2}(\hat{A}^1+\hat{A}^2)+\frac{6\lambda-t}{2}(\hat{B}+\hat{L})
        \end{cases};\quad
        N(t)=\begin{cases}
            0, &\ \ \ 0\leq t\leq 3\lambda,\\
            (t-3\lambda)\hat{L}, &\ \ \ 3\lambda\leq t\leq 4\lambda.
        \end{cases}
    \end{align*}
    Then,
    \[
        \mathrm{vol}\left(\sigma_2^\ast\left(-K_{S_2}-(1-\lambda)C^2\right)-t\hat{M}\right)=P(t)^2=\begin{cases}
            7\lambda^2-\frac{1}{2}t^2, &0\leq t\leq3\lambda,\\
            \frac{1}{2}t^2-6\lambda t+16\lambda^2, &3\lambda\leq t\leq 4\lambda,
        \end{cases}
    \]
    and hence,
    \[
        S_{S_2,(1-\lambda)C^2}(\hat{M})=\frac{53\lambda}{21}.
    \]
    We then obtain the upper bound
    \begin{equation}\label{2_C-EF_notflex_ub}
        \delta_p(S_2,(1-\lambda)C^2)\leq\frac{21+42\lambda}{53\lambda}.
    \end{equation}
    On the other hand, we have
    \[
        h(\hat{M},q,t)=\begin{cases}
            \frac{t^2}{8}, &0\leq t\leq3\lambda,\\
            \frac{6\lambda-t}{2}\cdot\mathrm{ord}_q(t-3\lambda)q_L+\frac{(6\lambda-t)^2}{8}, &3\lambda\leq t\leq4\lambda,
        \end{cases}
    \]
    and hence,
    \[
        S(W^{\hat{M}}_{\bullet,\bullet};q)=\begin{cases}
            \frac{23\lambda}{42}, &q\neq q_L,\\
            \frac{5\lambda}{7}, &q=q_L.
        \end{cases}
    \]
    Put $K_{\hat{M}}+\Delta_{\hat{M}}:=(K_{\hat{S}_2}+(1-\lambda)\hat{C}+\hat{M})|_{\hat{M}}$, then the log discrepancy along $q$ is
    \[
        A_{\hat{M},\Delta_{\hat{M}}}(q)=\begin{cases}
            1, &q\neq q_N,q_{C^2},\\
            \frac{1}{2}, &q=q_N,\\
            \lambda, &q=q_{C^2}.
        \end{cases}
    \]
    It then follows from Theorem~\ref{AZ} that
    \begin{equation}\label{2_C-EF_notflex_lb}
        \delta_p(S_2,(1-\lambda)C^2)
        \geq\min\left\{\frac{21+42\lambda}{53\lambda},\frac{42}{23\lambda},\frac{42}{23},\frac{21}{23\lambda},\frac{7}{5\lambda}\right\}=\begin{cases}
            \frac{21}{23\lambda}, &\frac{20}{23}\leq\lambda\leq 1,\\
            \frac{21+42\lambda}{53\lambda}, &\frac{23}{60}\leq\lambda\leq\frac{20}{23},\\
            \frac{42}{23}, &0<\lambda\leq\frac{23}{60}.
        \end{cases}
    \end{equation}
    Consequently, \eqref{2_C-EF_notflex_ub} and \eqref{2_C-EF_notflex_lb} complete the proof.
\end{proof}

\begin{lemma}\label{2_C-AB_flex}
    Suppose that $p$ is a point in $C^2\setminus A^1\cup A^2\cup B$ such that $\phi_2(p)$ is an inflection point of $\phi_2(C^2)$. Then, we have $$\frac{21+63\lambda}{68\lambda}\geq\delta_p(S_2,(1-\lambda)C^2)\geq\min\left\{\frac{21}{23\lambda},\frac{21+63\lambda}{68\lambda},\frac{63}{28}\right\}=\begin{cases}
        \frac{21}{23\lambda}, &\frac{15}{23}\leq\lambda\leq1,\\
        \frac{21+63\lambda}{68\lambda}, &\frac{23}{60}\leq\frac{20}{23},\\
        \frac{42}{23}, &0<\lambda\leq\frac{23}{60}.
    \end{cases}$$
\end{lemma}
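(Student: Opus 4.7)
The plan is to follow the same playbook as Lemma~\ref{2_C-AB_notflex}, replacing the $(1,2)$-weighted blowup by a $(1,3)$-weighted blowup to reflect the third-order contact of $C^2$ with its tangent line at the inflection point $\phi_2(p)$.

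First I take $L$ to be the strict transform on $S_2$ of the projective tangent line to the smooth plane cubic $\phi_2(C^2)$ at $\phi_2(p)$; this belongs to $|A^1+A^2+B|$ and, by the inflection hypothesis, $L\cdot C^2 = 3$ is concentrated at $p$. A short intersection-number argument (any reducible representative of $|A^1+A^2+B|$ through $p$ decomposes as $A^i+N^j$ or $B+(\text{something})$, and each such representative meets $C^2$ at $p$ with multiplicity at most $2$) shows that $L$ is irreducible. I then define $\sigma_2\colon \hat{S}_2\to (S_2,(1-\lambda)C^2)$ to be the $(1,3)$-blowup at $p$ with respect to $L$, whose exceptional divisor is $\hat{M}$; the image of the contracted $\mathrm{A}_2$ chain is a single quotient singularity $q_0\in\hat{M}$ of type $\frac{1}{3}(1,2)$. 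From the construction I read off
\[
\sigma_2^{\ast}C^2 = \hat{C}^2 + 3\hat{M},\quad
\sigma_2^{\ast}L = \hat{L} + 3\hat{M},\quad
A_{S_2,(1-\lambda)C^2}(\hat{M}) = 1+3\lambda,
\]
together with the numerical relation $\hat{L}\equiv \hat{A}^1+\hat{A}^2+\hat{B}-3\hat{M}$ and the intersection numbers $\hat{M}^2=-\tfrac{1}{3}$, $\hat{L}^2=-2$, $\hat{L}\cdot\hat{M}=1$, $\hat{B}\cdot\hat{M}=0$. Since $A^1$, $A^2$ and $B$ all avoid $p$, they retain their self-intersections $-1$ on $\hat{S}_2$.

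Next I verify that $T_3$ is a weak del Pezzo surface (its only $(-2)$-curves are $M^1_3$, $M^2_3$, $L_3$), so Proposition~\ref{MDS} realizes $\hat{S}_2$ as a Mori dream space whose Mori cone is generated by $[\hat{A}^1],[\hat{A}^2],[\hat{B}],[\hat{N}^1],[\hat{N}^2],[\hat{L}],[\hat{M}]$. I then compute the Zariski decomposition of $\sigma_2^{\ast}(-K_{S_2}-(1-\lambda)C^2)-t\hat{M}$ phase by phase. I expect three phases, separated by the values of $t$ at which $P(t)\cdot\hat{L}$ and then $P(t)\cdot\hat{B}$ successively vanish: $N(t)=0$ for $0\le t\le 3\lambda$; $N(t)=\tfrac{t-3\lambda}{2}\hat{L}$ for $3\lambda\le t\le 5\lambda$; and $N(t)=(t-4\lambda)\hat{L}+(t-5\lambda)\hat{B}$ for $5\lambda\le t\le 6\lambda$, with pseudoeffective threshold $\tau=6\lambda$. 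Integrating $P(t)^2$ across the three phases yields $S_{S_2,(1-\lambda)C^2}(\hat{M})=\tfrac{68\lambda}{21}$, which gives the upper bound $\tfrac{21+63\lambda}{68\lambda}$.

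Finally, to apply Theorem~\ref{AZ} I compute $S(W^{\hat{M}}_{\bullet,\bullet};q)$ at each distinguished point $q\in\hat{M}$: the singular point $q_0$, the point $q_L=\hat{M}\cap\hat{L}$, and the point $q_{C^2}=\hat{M}\cap\hat{C}^2$. Since $\hat{B}$ is disjoint from $\hat{M}$, only $\hat{L}$ contributes to the $\mathrm{ord}$-summand of $h(\hat{M},q,t)$, so $S(W^{\hat{M}}_{\bullet,\bullet};q)=\tfrac{23\lambda}{63}$ off $q_L$ and $S(W^{\hat{M}}_{\bullet,\bullet};q_L)=\tfrac{5\lambda}{7}$. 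Combined with $A_{\hat{M},\Delta_{\hat{M}}}(q_0)=\tfrac{1}{3}$, $A_{\hat{M},\Delta_{\hat{M}}}(q_{C^2})=\lambda$, and $A_{\hat{M},\Delta_{\hat{M}}}(q)=1$ otherwise, Theorem~\ref{AZ} delivers the claimed lower bound after dropping the dominated terms. The main obstacle will be the three-phase Zariski decomposition: because $\hat{B}^2=-1$ here (in contrast to $\hat{F}^2=-\tfrac{1}{3}$ in the analogous Lemma~\ref{1_C-E_flex}, where $F$ passed through the blown-up point), the second transition moves to $t=5\lambda$ rather than where it sits in the $S_1$ case, so each extremal ray of $\overline{NE}(\hat{S}_2)$ must be tested before one can conclude that $P(t)$ is nef on every subinterval.
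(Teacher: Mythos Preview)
Your approach is essentially identical to the paper's: the same $(1,3)$-blowup along the inflectional tangent $L$, the same Mori cone, the same three-phase Zariski decomposition with breakpoints at $3\lambda,5\lambda,6\lambda$, and the same application of Theorem~\ref{AZ}. One numerical slip: in the flex case $S(W^{\hat{M}}_{\bullet,\bullet};q_L)=\tfrac{59\lambda}{63}$, not $\tfrac{5\lambda}{7}$ (the extra third phase contributes to the $\mathrm{ord}_{q_L}$-term), but since $\tfrac{63}{59\lambda}>\tfrac{21}{23\lambda}$ this term is dominated anyway and your final bound is unaffected.
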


\begin{proof}
   As before, we take the curve $L$ in $|A^1+A^2+B|$ that is the strict transform of the tangent line of $\phi_2(C^2)$ at $\phi_2(p)$. Define $\sigma_2:\hat{S}_2\rightarrow (S_2,(1-\lambda)C^2)$ as the $(1,3)$-blowup with respect to $L$. We can see that $q_N$ is an $\mathrm{A}_2$ singularity, where $q_N$ is defined as in the previous lemma.
    The construction of $\hat{S}_2$ can be illustrated below:
    \begin{center}
    \begin{tikzpicture}[scale=0.7, every node/.style={scale=0.7}]
        \draw (-8,3) -- (-4,3);
        \draw (-3.75,3) node {$A^1$};
        \draw (-8,5) -- (-4,5);
        \draw (-3.75,5) node {$A^2$};
        \draw (-5,2) -- (-5,6);
        \draw (-5,1.7) node {$B$}; 
        \draw (-7.5,4) -- (-4.5,4);
        \draw (-4.25,4) node {$L$};
        \draw (-8,2) .. controls (-20/3,6) and (-16/3,2) .. (-4,6);
        \draw (-7.6,2) node {$C^2$};
        \draw (-6.75,2.5) -- (-5.75,4.5);
        \draw (-6.75,2.2) node {$N^1$};
        \draw (-6.75,5.5) -- (-5.75,3.5);
        \draw (-6.75,5.8) node {$N^2$};
        \filldraw[red] (-6,4) circle (2pt);
        \draw[red] (-6.4,4.2) node {$p$};

        \draw[->] (-2.5,4) --(-3.5,4);
        \draw (-3,4.3) node {$\pi^2_1$};
        \draw[->] (-2.5,.5) -- (-3.5,1.5);
        \draw (-3.2,.8) node {$\sigma_2$};

        \draw (-2,2.5) -- (2,2.5);
        \draw (2.25,2.5) node {$A^1_1$};
        \draw (-2,5.5) -- (2,5.5);
        \draw (2.25,5.5) node {$A^2_1$};
        \draw (1,2) -- (1,6);
        \draw (1,1.7) node {$B_1$};
        \draw (-1.5,4) -- (1.5,4);
        \draw (1.75,4) node {$L_1$};
        \draw (-2,2) .. controls (-4/3,10/3) and (-2/3,4) .. (0,4);
        \draw (2,6) .. controls (4/3, 14/3) and (2/3,4) .. (0,4);
        \draw (-1.6,2) node {$C^2_1$};
        \draw (.25,3.25) -- (-1,2);
        \draw (-1,1.7) node {$N^1_1$};
        \draw (.25,4.75) -- (-1,6);
        \draw (-1.3,6) node {$N^2_1$};
        \draw[red] (0,2.75) -- (0,5.25);
        \draw[red] (-.35,4.5) node {$M^1_1$};
        \filldraw[blue] (0,4) circle (2pt);

        \draw[->] (3.5,4) -- (2.5,4);
        \draw (3,4.3) node {$\pi^2_2$};

        \draw (4,2.5) -- (8,2.5);
        \draw (8.25,2.5) node {$A^1_2$};
        \draw (4,5.5) -- (8,5.5);
        \draw (8.25,5.5) node {$A^2_2$};
        \draw (7.5,2) -- (7.5,6);
        \draw (7.5,1.7) node {$B_2$};
        \draw (6,2) .. controls (20/3,5) and (22/3,3) .. (8,6);
        \draw (5.7,2) node {$C^2_2$};
        \draw (6,5) -- (8,3);
        \draw (8.25,3) node {$L_2$};
        \draw (5.5,3.25) -- (4.25,2);
        \draw (4.25,1.7) node {$N^1_2$};
        \draw (5.5,4.75) -- (4.25,6);
        \draw (3.9,6) node {$N^2_2$};
        \draw[red] (5.25,2.75) -- (5.25,5.25);
        \draw[red] (4.9, 4.5) node {$M^1_2$};
        \draw[blue] (4.5,4) -- (7.25,4);
        \draw[blue] (4.1,4) node {$M^2_2$};
        \filldraw[green] (7,4) circle (2pt);

        \draw[->] (6,.5) -- (6,1.5);
        \draw (6.25,1) node {$\pi^2_3$};

        \draw (4,-3.5) -- (8,-3.5);
        \draw (8.25,-3.5) node {$A^1_3$};
        \draw (4,-.5) -- (8,-.5);
        \draw (8.25,-.5) node {$A^2_3$};
        \draw (7.5,-4) -- (7.5,0);
        \draw (7.5,.3) node {$B_3$};
        \draw (5.5,-4) -- (8,0);
        \draw (8.3,.3) node {$C^2_3$};
        \draw (6.5,-3) -- (8,-2);
        \draw (8.25,-2) node {$L_3$};
        \draw (5.5,-2.75) -- (4.25,-4);
        \draw (3.9,-4) node {$N^1_3$};
        \draw (5.5,-1.25) -- (4.25,0);
        \draw (4.25,.3) node {$N^2_3$};
        \draw[red] (5.25,-3.25) -- (5.25,-.75);
        \draw[red] (4.9,-2.5) node {$M^1_3$};
        \draw[blue] (4.5,-2) -- (7,-.75);
        \draw[blue] (4.1,-2) node {$M^2_3$};
        \draw[green] (7,-3.25) -- (6.5,-.75);
        \draw[green] (6.2,-1.75) node {$M^3_3$};

        \draw[->] (3.5,-2) -- (2.5,-2);
        \draw (3,-1.7) node {$\tau_2$};

        \draw (-2.5,-3.5) -- (2,-3.5);
        \draw (-2.75,-3.5) node {$\hat{A}^1$};
        \draw (-2.5,-.5) -- (2,-.5);
        \draw (-2.75,-.5) node {$\hat{A}^2$};
        \draw (1.5,-4) -- (1.5,0);
        \draw (1.5,.3) node {$\hat{B}$};
        \draw (-1,-4) -- (2,0);
        \draw (2.2,.3) node {$\hat{C}^2$};
        \draw (.5,-3) -- (2,-3);
        \draw (2.25,-3) node {$\hat{L}$};
        \draw (-2,-4) -- (-.75,-1.5);
        \draw (-2.35,-4) node {$\hat{N}^1$};
        \draw (-2,0) -- (-.75,-2.5);
        \draw (-2,.3) node {$\hat{N}^2$};
        \draw[green] (1.25,-3.125) -- (-2,-1.5);
        \draw[green] (-2.3,-1.5) node {$\hat{M}$};
        \filldraw[purple] (-1,-2) circle (2pt);
        \draw[purple] (-1.9,-2.2) node {$\frac{1}{3}(1,2)$};
        \draw[purple] (-1,-2.7) node {$q_N$};
        \filldraw (1/11,-28/11) circle (2pt);
        \draw (1/11,-2.1) node {$q_{C^2}$};
        \filldraw (1,-3) circle (2pt);
        \draw (1,-2.7) node {$q_L$};
    \end{tikzpicture}
    \end{center}
    
    We then have
    \[
        \hat{N}^1\equiv\hat{A}^2+\hat{B}-\hat{M},\quad
        \hat{N}^2\equiv\hat{A}^1+\hat{B}-\hat{M},\quad
        \hat{L}\equiv\hat{A}^1+\hat{A}^2+\hat{B}-3\hat{M}.
    \]
    The pullbacks by $\sigma_2$ are given by
    \[
    \begin{split}
        &\sigma_2^\ast A^i=\hat{A}^i,\quad\sigma_2^\ast B =\hat{B}, \quad\sigma_2^\ast L=\hat{L}+3\hat{M},\quad\sigma_2^\ast N^i=\hat{N}_i+\hat{M},\\
        &\sigma_2^\ast K_{S_2}=K_{\hat{S}_2}-3\hat{M},\quad\sigma_2^\ast C^2=\hat{C}^2+3\hat{M},\quad\text{for }i=1,2,
    \end{split}
    \]
    which yield
       $ A_{S_2,(1-\lambda)C^2}(\hat{M})=1+3\lambda.$

    The intersections are given as follows:
    \begin{longtable}{cccccccc}
                        & $\hat{A}^1$ & $\hat{A}^2$ & $\hat{B}$ & $\hat{N}^1$    & $\hat{N}^2$    & $\hat{L}$      & $\hat{M}$ \\ 
            $\hat{A}^1$ & $-1$        & $0$         & $1$       & $1$            & $0$            & $0$            & $0$       \\
            $\hat{A}^2$ & $0$         & $-1$        & $1$       & $0$            & $1$            & $0$            & $0$       \\ 
            $\hat{B}$   & $1$         & $1$         & $-1$      & $0$            & $0$            & $1$            & $0$       \\
            $\hat{N}^1$ & $1$         & $0$         & $0$       & $-\frac{1}{3}$ & $\frac{1}{3}$  & $0$            & $\frac{1}{3}$       \\ 
            $\hat{N}^2$ & $0$         & $1$         & $0$       & $\frac{1}{3}$  & $-\frac{1}{3}$ & $0$            & $\frac{1}{3}$       \\ 
            $\hat{L}$   & $0$         & $0$         & $1$       & $0$            & $0$            & $-2$           & $1$       \\ 
            $\hat{M}$   & $0$         & $0$         & $0$       & $\frac{1}{3}$  & $\frac{1}{3}$  & $1$            & $-\frac{1}{3}$      \\ 
    \end{longtable}

    Since $T_3$ is a weak del Pezzo surface, $\hat{S}_2$ is a Mori dream space, and its Mori cone is given by
    \[
        \overline{NE}(\hat{S}_2)=\mathrm{Cone}\{[\hat{A}^1],[\hat{A}^2],[\hat{B}],[\hat{N}^1],[\hat{N}^2],[\hat{L}],[\hat{M}]\}.
    \]
   We have the numerical equivalence
    \[
    \begin{split}
        \sigma_2^\ast\left(-K_{S_2}-(1-\lambda)C^2\right)-t\hat{M}&\equiv2\lambda\hat{A}^1+2\lambda\hat{A}^2+3\lambda\hat{B}-t\hat{M}\\
        &\equiv\frac{6\lambda-t}{3}(\hat{A}^2+\hat{A}^2)+\frac{9\lambda-t}{3}\hat{B}+\frac{t}{3}\hat{L},
    \end{split}
    \]
    which implies that the divisor is pseudoeffective only for $t\leq6\lambda$.  Its Zariski decomposition is given by
       \begin{align*}
        P(t)=\begin{cases}
            \frac{6\lambda-t}{3}(\hat{A}^1+\hat{A}^2)+\frac{9\lambda-t}{3}\hat{B}+\frac{t}{3}\hat{L}\\
            \frac{6\lambda-t}{3}(\hat{A}^1+\hat{A}^2)+\frac{9\lambda-t}{6}(2\hat{B}+\hat{L})\\
            \frac{6\lambda-t}{3}(\hat{A}^1+\hat{A}^2+4\hat{B}+2\hat{L})
        \end{cases};\quad
        N(t)=\begin{cases}
            0, &0\leq t\leq 3\lambda,\\
            \frac{t-3\lambda}{2}\hat{L}, &3\lambda\leq t\leq 5\lambda,\\
            (t-5\lambda)\hat{B}+(t-4\lambda)\hat{L}, & 5\lambda\leq t\leq6\lambda.
        \end{cases}
    \end{align*}
    Then,
    \[
        \mathrm{vol}\left(\sigma_2^\ast\left(-K_{S_2}-(1-\lambda)C^2\right)-t\hat{M}\right)=P(t)^2=\begin{cases}
            7\lambda^2-\frac{1}{3}t^2, &0\leq t\leq3\lambda,\\
            \frac{1}{6}t^2-3\lambda t+\frac{23}{2}\lambda^2, &3\lambda\leq t\leq 5\lambda,\\
            \frac{2}{3}(6\lambda-t)^2, &5\lambda\leq t\leq 6\lambda,\\
        \end{cases}
    \]
    and hence,
    \[
        S_{S_2,(1-\lambda)C^2}(\hat{M})=\frac{68\lambda}{21}.
    \]
    Thus, the upper bound is given by
    \begin{equation}\label{2_C-EF_flex_ub}
        \delta_p(S_2,(1-\lambda)C^2)\leq\frac{21+63\lambda}{68\lambda}.
    \end{equation}
    To compute a lower bound, for each $q$ on $\hat{M}$, we have
    \[
        h(\hat{M},q,t)=\begin{cases}
            \frac{t^2}{18}, &0\leq t\leq3\lambda,\\
            \frac{9\lambda-t}{3}\cdot\mathrm{ord}_q\frac{t-3\lambda}{2}q_L+\frac{(9\lambda-t)^2}{72}, &3\lambda\leq t\leq5\lambda,\\
            \frac{12\lambda-2t}{3}\cdot\mathrm{ord}_q(t-4\lambda)q_L+\frac{2(6\lambda-t)^2}{9}, &5\lambda\leq t\leq6\lambda,
        \end{cases}
    \]
    and hence,
    \[
        S(W^{\hat{M}}_{\bullet,\bullet};q)=\begin{cases}
            \frac{23\lambda}{63}, &q\neq q_L,\\
            \frac{59\lambda}{63}, &q=q_L.
        \end{cases}
    \]
    Put $K_{\hat{M}}+\Delta_{\hat{M}}:=(K_{\hat{S}_2}+(1-\lambda)\hat{C}+\hat{M})|_{\hat{M}}$, then we obtain
    \[
        A_{\hat{M},\Delta_{\hat{M}}}(q)=\begin{cases}
            1, &q\neq q_N,q_{C^2},\\
            \frac{1}{3}, &q=q_N,\\
            \lambda, &q=q_{C^2}.
        \end{cases}
    \]
    It then follows from Theorem~\ref{AZ} that
    \begin{equation}\label{2_C-EF_flex_lb}
        \delta_p(S_2,(1-\lambda)C^2)
        \geq\min\left\{\frac{21+63\lambda}{68\lambda},\frac{63}{23\lambda},\frac{63}{23},\frac{21}{23\lambda},\frac{63}{59\lambda}\right\}
        =\begin{cases}
            \frac{21}{23\lambda}, &\frac{15}{23}\leq\lambda\leq 1,\\
            \frac{21+63\lambda}{68\lambda}, &\frac{23}{135}\leq\lambda\leq\frac{15}{23},\\
            \frac{63}{23}, &0<\lambda\leq\frac{23}{135}.
        \end{cases}
    \end{equation}
    The proof is concluded from \eqref{2_C-EF_flex_ub} and \eqref{2_C-EF_flex_lb}.
\end{proof}

\begin{lemma}\label{2_CN_tangent}
    Suppose that $p$ is a point in $C^2\setminus A^1\cup A^2\cup B$ and either $N^1$ or $N^2$ intersects $C^2$ tangentially at $p$. Then  $$\min\left\{\frac{7+14\lambda}{19\lambda},\frac{7}{3}\right\}\leq\delta_p(S_2,(1-\lambda)C^2)\leq\frac{7+14\lambda}{19\lambda}.$$ In particular, if $\lambda\geq\frac{3}{13}$, then $$\delta_p(S_2,(1-\lambda)C^2)=\frac{7+14\lambda}{19\lambda}.$$
\end{lemma}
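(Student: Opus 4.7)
The plan is to adapt the proof of Lemma~\ref{1_CF_tangent} to this configuration. Assume without loss of generality that $N^1$ is the component tangent to $C^2$ at $p$, since the other case follows by symmetry. I would perform the weighted $(1,2)$-blowup $\sigma_2\colon \hat{S}_2 \to S_2$ with respect to $N^1$; the contraction $\tau_2$ yields a single $\mathrm{A}_1$ singularity, whose image on $\hat{M}$ I denote by $q_0$. Standard computations give $\sigma_2^* N^1 = \hat{N}^1 + 2\hat{M}$, $\sigma_2^* C^2 = \hat{C}^2 + 2\hat{M}$, and $\sigma_2^* K_{S_2} = K_{\hat{S}_2} - 2\hat{M}$, from which $A_{S_2,(1-\lambda)C^2}(\hat{M}) = 1 + 2\lambda$. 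The intersection numbers against $\hat{M}$, obtained via the contraction formula applied to the $(-2)$-curve $G^1_2$, are $\hat{M}^2 = -\tfrac{1}{2}$, $\hat{N}^1 \cdot \hat{M} = 1$, $\hat{N}^2 \cdot \hat{M} = \tfrac{1}{2}$ (supported entirely at $q_0$), $\hat{C}^2 \cdot \hat{M} = 1$, and $\hat{A}^i \cdot \hat{M} = \hat{B} \cdot \hat{M} = 0$. Since $T^2_2$ is a weak del Pezzo surface, Proposition~\ref{MDS} confirms that $\hat{S}_2$ is a Mori dream space.

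The central computation is the Zariski decomposition of
\[
\sigma_2^*\bigl(-K_{S_2} - (1-\lambda)C^2\bigr) - t\hat{M} \equiv 2\lambda\hat{A}^1 + 2\lambda\hat{A}^2 + 3\lambda\hat{B} - t\hat{M}.
\]
Intersecting against the extremal rays shows this divisor is nef on $[0, 2\lambda]$. On $[2\lambda, 4\lambda]$ the intersection with $\hat{N}^1$ becomes negative, so the negative part is $\tfrac{t-2\lambda}{2}\hat{N}^1$. At $t = 4\lambda$ the intersections with both $\hat{A}^1$ and $\hat{N}^2$ vanish simultaneously, and on $[4\lambda, 5\lambda]$ the negative part expands to $(t-3\lambda)\hat{N}^1 + (t-4\lambda)\hat{A}^1 + (t-4\lambda)\hat{N}^2$, as determined by the coupled linear system making $P(t)$ orthogonal to these three curves. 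The pseudoeffective threshold is $\tau = 5\lambda$, and piecewise integration of $P(t)^2$ yields $S_{S_2,(1-\lambda)C^2}(\hat{M}) = 19\lambda/7$; the upper bound $(7+14\lambda)/(19\lambda)$ follows at once.

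For the lower bound I would compute $P(t) \cdot \hat{M}$ (which equals $t/2$, $\lambda$, and $5\lambda - t$ on the three subintervals) and the restrictions $N(t)|_{\hat{M}}$. Only $q_0$ (through which the $\hat{N}^2$ part of $N(t)$ passes) and the transverse intersection $q_{N^1} := \hat{N}^1 \cap \hat{M}$ receive nontrivial contributions from the negative part, giving $S(W^{\hat{M}}_{\bullet,\bullet}; q) = 3\lambda/7$ at a generic point, $19\lambda/42$ at $q_0$, and $19\lambda/21$ at $q_{N^1}$. The log discrepancies $A_{\hat{M},\Delta_{\hat{M}}}(q)$ are $1$ generically, $\lambda$ at $q_{C^2}$, and $\tfrac{1}{2}$ at $q_0$ (from the different at the $\mathrm{A}_1$ singularity). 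Substituting into Theorem~\ref{AZ} and simplifying (the ratios arising from the generic point, $q_0$, and $q_{N^1}$ are all dominated by $(7+14\lambda)/(19\lambda)$ or $7/3$ when $\lambda \leq 1$) yields the lower bound $\min\{(7+14\lambda)/(19\lambda),\,7/3\}$, which coincides at $\lambda = 3/13$ with the first term being the smaller for $\lambda \geq 3/13$.

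The main obstacle will be the Zariski decomposition on $[4\lambda, 5\lambda]$, where three curves touch the nef boundary simultaneously and the negative part emerges only from solving a coupled linear system in three unknowns. A more delicate technical point is that $\hat{N}^2$ meets $\hat{M}$ exclusively at the singular point $q_0$ with fractional intersection multiplicity $1/2$, so one must interpret $N(t)|_{\hat{M}}$ as a $\mathbb{Q}$-divisor when extracting $\mathrm{ord}_{q_0}$.
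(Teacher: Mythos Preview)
Your proposal is correct and follows essentially the same approach as the paper: the same $(1,2)$-blowup with respect to $N^1$, the same three-piece Zariski decomposition with breakpoints at $2\lambda$, $4\lambda$, $5\lambda$, and the same values of $S(W^{\hat{M}}_{\bullet,\bullet};q)$ and $A_{\hat{M},\Delta_{\hat{M}}}(q)$. The only cosmetic difference is that the paper names the $\mathrm{A}_1$ point $q_{N^2}$ (since $\hat{N}^2$ is the curve through it) rather than $q_0$, and it records the four candidate ratios $\bigl\{\tfrac{7+14\lambda}{19\lambda},\tfrac{7}{3\lambda},\tfrac{7}{3},\tfrac{21}{19\lambda}\bigr\}$ explicitly before observing that the last two dominate.
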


\begin{proof}
    We may assume that $N^1$ is tangent to $C^2$ at $p$. Let $\sigma_2:\hat{S}_2\rightarrow (S_2,(1-\lambda)C^2)$ be the $(1,2)$-blowup at $p$ with respect to $N^1$. Note that $q_{N^2}$ is an $\mathrm{A}_1$ singularity.
    This process is illustrated as follows:
    
    \begin{center}
    \begin{tikzpicture}[scale=0.7, every node/.style={scale=0.7}]
        \draw (-7,1.5) -- (-4,1.5);
        \draw (-3.7,1.5) node {$A^1$};
        \draw (-7,3.5) -- (-4,3.5);
        \draw (-3.7,3.5) node {$A^2$};
        \draw (-6.5,1) -- (-6.75,3.75);
        \draw (-7,2.5) node {$B$};
        \draw (-7,1) .. controls (-4,2) and (-4,3) .. (-7,4);
        \draw (-7.3,4) node {$C^2$};
        \draw (-4.75,1) -- (-4.75,3.1);
        \draw (-4.75,.7) node {$N^1$};
        \draw (-4.5,2.25) -- (-6,4);
        \draw (-6,4.3) node {$N^2$};
        \filldraw[red] (-4.75,2.5) circle (2pt);
        \draw[red] (-4.45,2.7) node {$p$};

        \draw[->] (-2.5,2.5) -- (-3.5,2.5);
        \draw (-3,2.8) node {$\pi^2_1$};
        \draw[->] (-2.5,-.5) -- (-3.5,.5);
        \draw (-3,-.3) node {$\sigma_2$};

        \draw (-2,1.5) -- (1,1.5);
        \draw (1.3,1.5) node {$A^1_1$};
        \draw (-2,3.5) -- (1,3.5);
        \draw (1.3,3.5) node {$A^2_1$};
        \draw (-2,1) -- (1,4);
        \draw (-1.25,1) -- (-1.25,4);
        \draw (-1.25,.7) node {$B_1$};
        \draw (1,4.3) node {$C^2_1$};
        \draw (-.5,1) -- (-.5,4);
        \draw (-.2,1) node {$N^1_1$};
        \draw (.25,2) -- (.25,4);
        \draw (.25,4.3) node {$N^2_1$};
        \draw[red] (-1,2.5) -- (1,2.5);
        \draw[red] (1,2.2) node {$M^1_1$};
        \filldraw[blue] (-.5,2.5) circle (2pt);

        \draw[->] (2.5,2.5) -- (1.5,2.5);
        \draw (2,2.8) node {$\pi^2_2$};
        \draw[->] (2.5,.5) -- (1.5,-.5);
        \draw (2,.3) node {$\tau_2$};

        \draw (2.5,1.5) -- (6,1.5);
        \draw (6.3,1.5) node {$A^1_2$};
        \draw (3,3.5) -- (6,3.5);
        \draw (6.3,3.5) node {$A^2_2$};
        \draw (3,1) -- (3.5,4);
        \draw (3.5,4.3) node {$B_2$};
        \draw (4,1) -- (3,4);
        \draw (4,.7) node {$C^2_2$};
        \draw (5.5,1) -- (5.5,3);
        \draw (5.5,.7) node {$N^1_2$};
        \draw (3.5,2.25) -- (6,4);
        \draw (6,4.3) node {$N^2_2$};
        \draw[red] (4.5,1.75) -- (4.5,3.25);
        \draw[red] (4.8,2.5) node {$M^1_2$};
        \draw[blue] (3.5,2) -- (6,2);
        \draw[blue] (6.3,2.1) node {$M^2_2$};

        \draw (-2,-2.5) -- (1,-2.5);
        \draw (1.3,-2.5) node {$\hat{A}^1$};
        \draw (-1.5,-3) -- (-1.5,0);
        \draw (-.5,-.2) node {$\hat{A}^2$};
        \draw (-2,-.5) -- (1,-.5);
        \draw (-1.5,-3.3) node {$\hat{B}$};
        \draw (-.5,-3) -- (-2,0);
        \draw (-.5,-3.3) node {$\hat{C}^2$};
        \draw (.25,-3) -- (.25,-1.25);
        \draw (.25,-3.3) node {$\hat{N}^1$};
        \draw (-1.25,-2.25) -- (1,0);
        \draw (1,.3) node {$\hat{N}^2$};
        \draw[blue] (-1.4,-1.5)--(1,-1.5);
        \draw[blue] (1.3,-1.5) node {$\hat{M}$};
        \filldraw (-1.25,-1.5) circle (2pt);
        \draw (-1,-1) node {$q_{C^2}$};
        \filldraw[red] (-.5,-1.5) circle (2pt);
        \draw[red] (-.5,-1.2) node {$q_{N^2}$};
        \draw[red] (-.5,-1.8) node {$\frac{1}{2}(1,1)$};
        \filldraw (.25,-1.5) circle (2pt);
        \draw (.8,-1.2) node {$q_{N^1}$};
    \end{tikzpicture}
    \end{center}

    We then have
    \[
        \hat{N}^1\equiv\hat{A}^2+\hat{B}-2\hat{M},\quad\hat{N}^2\equiv\hat
        A^1+\hat{B}-\hat{M}.
    \]
    We also compute
    \[
    \begin{split}
        &\sigma_2^\ast K_{S_2}=K_{\hat{S}_2}-2\hat{M},\quad\sigma_2^\ast N^1=\hat{N}^1+2\hat{M},\quad\sigma_2^\ast N^2=\hat{N}^2+\hat{M},\\
        &\sigma_2^\ast C^2=\hat{C}^2+2\hat{M},\quad\sigma_2^\ast A^i=\hat{A}^1,\quad\sigma_2^\ast A^2=\hat{A}^2,\quad\sigma_2^\ast B=\hat{B},
    \end{split}
    \]
    and it follows that
    \[
        A_{S_2,(1-\lambda)C^2}(\hat{M})=1+2\lambda.
    \]
    The intersections on $\hat{S}_2$ are given by
    \begin{longtable}{ccccccc}
            & $\hat{A}^1$ & $\hat{A}^2$ & $\hat{B}$ & $\hat{N}^1$    & $\hat{N}^2$    & $\hat{M}$ \\ 
            $\hat{A}^1$ & $-1$        & $0$         & $1$       & $1$            & $0$            & $0$       \\ 
            $\hat{A}^2$ & $0$         & $-1$        & $1$       & $0$            & $1$            & $0$       \\ 
            $\hat{B}$   & $1$         & $1$         & $-1$      & $0$            & $0$            & $0$       \\ 
            $\hat{N}^1$ & $1$         & $0$         & $0$       & $-2$ & $0$  & $1$       \\ 
            $\hat{N}^2$ & $0$         & $1$         & $0$       & $0$  & $-\frac{1}{2}$  & $\frac{1}{2}$       \\ 
            $\hat{M}$   & $0$ & $0$ & $0$ & $1$ & $\frac{1}{2}$ & $-\frac{1}{2}$      \\
    \end{longtable}

   As before, $\hat{S}_2$ is a Mori dream space, and its Mori cone is
    \[
        \overline{NE}(\hat{S}_2)=\mathrm{Cone}\{[\hat{A}^1],[\hat{A}^2],[\hat{B}],[\hat{N}^1],[\hat{N}^2],[\hat{M}]\}.
    \]
    Since we have the numerical equivalence
    \[
    \begin{split}
        \sigma_2^\ast\left(-K_{S_2}-(1-\lambda)C^2\right)-t\hat{M}
        &\equiv \frac{t}{2}\hat{N}^1+2\lambda\hat{A}^1+\frac{4\lambda-t}{2}\hat{A}^2+\frac{6\lambda-t}{2}\hat{B}\\
        &\equiv 2\lambda\hat{N}^1+(t-4\lambda)\hat{N}^2+(6\lambda-t)\hat{A}^1+(5\lambda-t)\hat{B},
    \end{split}
    \]
    the pseudoeffective threshold $\tau_{S_2,(1-\lambda)C^2}(\hat{M})$ is $5\lambda$. The Zariski decomposition of this divisor is given by
    \[
    \begin{split}
        P(t)&=\begin{cases}
            \frac{t}{2}\hat{N}^1+2\lambda\hat{A}^1+\frac{4\lambda-t}{2}\hat{A}^2+\frac{6\lambda-t}{2}\hat{B}, &0\leq t\leq2\lambda,\\
            \lambda\hat{N}^1+2\lambda\hat{A}^1+\frac{4\lambda-t}{2}\hat{A}^2+\frac{6\lambda-t}{2}\hat{B}, &2\lambda\leq t\leq4\lambda,\\
            (5\lambda-t)(\hat{N^1}+2\hat{A}^1+\hat{B}), &4\lambda\leq t\leq5\lambda;
        \end{cases}
        \\
        N(t)&=\begin{cases}
            0, &0\leq t\leq 2\lambda,\\
            \frac{t-2\lambda}{2}\hat{N}^1, &2\lambda\leq t\leq 4\lambda,\\
            (t-3\lambda)\hat{N}^1+(t-4\lambda)\hat{N}^2+(t-4\lambda)\hat{A}^1, &4\lambda\leq t\leq 5\lambda.
        \end{cases}
    \end{split}
    \]
    Then,
    \[
        \mathrm{vol}\left(\sigma_2^\ast\left(-K_{S_2}-(1-\lambda)C^2\right)-t\hat{M}\right)=P(t)^2=\begin{cases}
            7\lambda^2-\frac{t^2}{2}, & 0\leq t\leq 2\lambda,\\
            9\lambda^2-2\lambda t, &2\lambda\leq t\leq4\lambda,\\
            (5\lambda-t)^2, &4\lambda\leq t\leq 5\lambda,
        \end{cases}
    \]
    and hence,
    \[
        S_{S_2,(1-\lambda)C^2}(\hat{M})=\frac{19\lambda}{7}.
    \]
    We thus obtain the upper bound
    \begin{equation}\label{2_CN_tangent_ub}
        \delta_p(S_2,(1-\lambda)C^2)\leq\frac{7+14\lambda}{19\lambda}.
    \end{equation}
    Meanwhile, we also have
    \[
        h(\hat{M},q,t)=\begin{cases}
            \frac{t^2}{8}, &0\leq t\leq 2\lambda,\\
            \lambda\cdot\mathrm{ord}_q\frac{t-2\lambda}{2}q_{N^1}+\frac{\lambda^2}{2}, &2\lambda\leq t\leq4\lambda,\\
            (5\lambda-t)\cdot\mathrm{ord}_q\left((t-3\lambda)q_{N^1}+\frac{t-4\lambda}{2}q_{N^2}\right)+\frac{(5\lambda-t)^2}{2}, &4\lambda\leq t\leq 5\lambda,
        \end{cases}
    \]
    and hence,
    \[
        S(W^{\hat{M}}_{\bullet,\bullet};q)=\begin{cases}
            \frac{3\lambda}{7}. &q\neq q_{N^1},q_{N^2},\\
            \frac{19\lambda}{21}, &q=q_{N^1},\\
            \frac{19\lambda}{42}, &q=q_{N^2}.
        \end{cases}
    \]
    Put $\hat{M}+\Delta_{\hat{M}}:=(K_{\hat{S}_2}+(1-\lambda)\hat{C}^2+\hat{M})|_{\hat{M}}$. The log discrepancy of the restricted pair is
    \[
        A_{\hat{M},\Delta_{\hat{M}}}(q)=\begin{cases}
            1, &q\neq q_{N^2},q_{C^2},\\
            \frac{1}{2}, &q=q_{N^2},\\
            \lambda, &q=q_{C^2}.
        \end{cases}
    \]
    From Theorem~\ref{AZ}, we then deduce the lower bound
    \begin{equation}\label{2_CN_tangent_lb}
        \delta_p(S_2,(1-\lambda)C^2)\geq\min\left\{\frac{7+14\lambda}{19\lambda},\frac{7}{3\lambda},\frac{7}{3},\frac{21}{19\lambda}\right\}=\begin{cases}
            \frac{7+14\lambda}{19\lambda}, &\frac{3}{13}\leq t\leq 1,\\
            \frac{7}{3}, &0<t\leq\frac{3}{13}.
        \end{cases}
    \end{equation}
    Then, \eqref{2_CN_tangent_ub} and \eqref{2_CN_tangent_lb} complete the proof.
\end{proof}

\begin{lemma}\label{2_CA-B_notflex}

 Suppose that $p$ is a point in $C^2\cap (A^1\cup A^2)\setminus B$, and  that $\phi_2(p)$ is not an inflection point of $\phi_2(C^2)$. Then $$\min\left\{\frac{21}{23\lambda},\frac{1+\lambda}{2\lambda},\frac{42}{23}\right\}\leq\delta_p(S_2,(1-\lambda)C^2)\leq\min\left\{\frac{21}{23\lambda},\frac{1+\lambda}{2\lambda}\right\}.$$
    In particular, for $\lambda\geq\frac{23}{61}$, we have $$\delta_p(S_2,(1-\lambda)C^2)=\begin{cases}
        \frac{21}{23\lambda}, &\frac{19}{23}\leq\lambda\leq 1,\\
        \frac{1+\lambda}{2\lambda}, &\frac{23}{61}\leq\lambda\leq\frac{19}{23}.
    \end{cases}$$
\end{lemma}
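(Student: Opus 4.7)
The plan is to follow the strategy of Lemma~\ref{1_CE_notflex} by passing to an ordinary blowup, with the pair of curves $(E,F)$ on $S_1$ replaced by the analogous pair $(A^1,N^1)$ on $S_2$. Without loss of generality I assume $p\in C^2\cap A^1$. Let $\sigma_2\colon\hat{S}_2\to S_2$ denote the blowup at $p$ with exceptional curve $\hat{M}$; since $\hat{C}^2$ and $\hat{M}$ meet transversely at a single point, $\sigma_2$ is the desired plt blowup of $(S_2,(1-\lambda)C^2)$. Because $p\in A^1\setminus(A^2\cup B)$ lies on the $0$-curve $N^1$, the strict transforms satisfy $\hat{A}^1\cdot\hat{A}^1=-2$, $\hat{A}^2=A^2$, $\hat{B}=B$, and $\hat{N}^1$ is a $(-1)$-curve with $\hat{N}^1\cdot\hat{M}=1$. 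The pullbacks $\sigma_2^\ast A^1=\hat{A}^1+\hat{M}$ and $\sigma_2^\ast C^2=\hat{C}^2+\hat{M}$ give $A_{S_2,(1-\lambda)C^2}(\hat{M})=1+\lambda$, and a direct check shows $-K_{\hat{S}_2}\cdot\hat{A}^1=0$, so $\hat{S}_2$ is a weak del Pezzo surface of degree $6$, hence a Mori dream space in which Theorem~\ref{AZ} applies. The non-inflection hypothesis forces $\phi_2(N^1)$ to meet $\phi_2(C^2)$ at $x_1=\phi_2(p)$ with multiplicity exactly $2$, ensuring that $q_{A^1},q_{N^1},q_{C^2}\in\hat{M}$ are pairwise distinct.

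Next I set
\[
D(t):=\sigma_2^\ast\bigl(-K_{S_2}-(1-\lambda)C^2\bigr)-t\hat{M}\equiv 2\lambda\hat{A}^1+2\lambda\hat{A}^2+3\lambda\hat{B}+(2\lambda-t)\hat{M}.
\]
Intersecting $D(t)$ with the extremal classes $[\hat{A}^1],[\hat{A}^2],[\hat{B}],[\hat{N}^1],[\hat{M}]$ of $\overline{NE}(\hat{S}_2)$ shows that $D(t)$ fails to be nef first along $\hat{A}^1$ at $t=\lambda$, then along $\hat{N}^1$ at $t=2\lambda$, then along $\hat{B}$ at $t=3\lambda$, with pseudoeffective threshold $\tau=4\lambda$. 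Solving $P(t)\cdot C=0$ for each $C\subset\mathrm{Supp}\,N(t)$ successively produces the four-step Zariski decomposition
\[
N(t)=\begin{cases}
0, & 0\le t\le\lambda,\\
\tfrac{t-\lambda}{2}\hat{A}^1, & \lambda\le t\le 2\lambda,\\
\tfrac{t-\lambda}{2}\hat{A}^1+(t-2\lambda)\hat{N}^1, & 2\lambda\le t\le 3\lambda,\\
(t-2\lambda)\bigl(\hat{A}^1+\hat{N}^1\bigr)+(t-3\lambda)\hat{B}, & 3\lambda\le t\le 4\lambda.
\end{cases}
\]
Using $P(t)^2=D(t)^2-N(t)^2$ with $D(t)^2=7\lambda^2-t^2$, routine integration yields $S_{S_2,(1-\lambda)C^2}(\hat{M})=2\lambda$, so $A/S=(1+\lambda)/(2\lambda)$. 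Combined with \eqref{2_Ei-CF_ub}, this gives the upper bound $\min\{\tfrac{21}{23\lambda},\tfrac{1+\lambda}{2\lambda}\}$.

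For the lower bound, since $\hat{S}_2$ is smooth the different $\Delta_{\hat{M}}$ equals $(1-\lambda)q_{C^2}$, giving $A_{\hat{M},\Delta_{\hat{M}}}(q_{C^2})=\lambda$ and $A_{\hat{M},\Delta_{\hat{M}}}(q)=1$ at every other point. Because $\hat{B}\cdot\hat{M}=0$, the restriction $N(t)|_{\hat{M}}$ is supported on $\{q_{A^1},q_{N^1}\}$, and polynomial integration of $h(\hat{M},q,t)$ across the four intervals produces
\[
S(W^{\hat{M}}_{\bullet,\bullet};q)=\tfrac{23\lambda}{42}\text{ for generic }q,\qquad S(W^{\hat{M}}_{\bullet,\bullet};q_{A^1})=\tfrac{23\lambda}{21},\qquad S(W^{\hat{M}}_{\bullet,\bullet};q_{N^1})=\tfrac{19\lambda}{21}.
\]
Theorem~\ref{AZ} then gives $\delta_p\ge\min\bigl\{\tfrac{1+\lambda}{2\lambda},\tfrac{21}{23\lambda},\tfrac{21}{19\lambda},\tfrac{42}{23\lambda},\tfrac{42}{23}\bigr\}=\min\bigl\{\tfrac{21}{23\lambda},\tfrac{1+\lambda}{2\lambda},\tfrac{42}{23}\bigr\}$. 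For $\lambda\ge 23/61$ a direct comparison shows that $42/23$ dominates both $\tfrac{21}{23\lambda}$ and $\tfrac{1+\lambda}{2\lambda}$, so the upper and lower bounds coincide, and the switch between the remaining two entries at $\lambda=19/23$ recovers the piecewise formula in the statement.

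The principal computational obstacle is the Zariski decomposition across the four intervals: I must correctly identify the order in which $\hat{A}^1$, $\hat{N}^1$, and $\hat{B}$ enter $N(t)$, carry the coefficient $(2\lambda-t)\hat{M}$ of $D(t)$ (which turns negative past $t=2\lambda$) through each regime, and verify at every stage that $P(t)$ has non-negative intersection with every $(-1)$- and $(-2)$-curve on $\hat{S}_2$. Once these checks are in place, both the $S_{S_2,(1-\lambda)C^2}(\hat{M})$-integral and the $S(W^{\hat{M}}_{\bullet,\bullet};q)$-integrals reduce to routine polynomial evaluations closely paralleling those of Lemma~\ref{1_CE_notflex}.
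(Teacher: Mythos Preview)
Your proposal is correct and follows essentially the same approach as the paper: the ordinary blowup at $p$, the same Mori cone generators $[\hat{A}^1],[\hat{A}^2],[\hat{B}],[\hat{N}^1],[\hat{M}]$, the identical four-step Zariski decomposition, and the same values $S_{S_2,(1-\lambda)C^2}(\hat{M})=2\lambda$ and $S(W^{\hat{M}}_{\bullet,\bullet};q)\in\{\tfrac{23\lambda}{42},\tfrac{23\lambda}{21},\tfrac{19\lambda}{21}\}$. Your justification that the non-inflection hypothesis makes $N^1$ and $C^2$ transverse at $p$ (so $q_{A^1},q_{N^1},q_{C^2}$ are distinct) is exactly the point the paper uses.
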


\begin{proof}
    We may assume that $p$ is the intersection point of $C^2$ and $A^1$. By the assumption, $N^1$ and $C^2$ meet transversally  at $p$. Let $\sigma_2:\hat{S}_2\rightarrow (S_2,(1-\lambda)C^2)$ be the blowup at $p$ with the exceptional curve $\hat{M}$. This is the plt blowup of $(S_2,(1-\lambda)C^2)$ that we need. It can be illustrated as follows:
    \begin{center}
    \begin{tikzpicture}[scale=0.7, every node/.style={scale=0.7}]
        \draw (-4,-1) -- (-1,-1);
        \draw (-4.3,-1) node {$A^1$};
        \draw  (-4,1) -- (-1,1);
        \draw (-4.3,1) node {$A^2$};
        \draw (-2,-1.5) -- (-2,1.5);
        \draw (-2,-1.8) node {$B$};
        \draw (-4,-1.5) -- (-1,1.5);
        \draw (-4.3,-1.8) node {$C^2$};
        \draw (-3.5,-1.5) -- (-3.5,.5);
        \draw (-3.5,-1.8) node {$N^1$};
        \filldraw[red] (-3.5,-1) circle (2pt);
        \draw[red] (-3.2,-1.3) node {$p$};

        \draw[->] (.5,0) -- (-.5,0);
        \draw (0,.3) node {$\sigma_2$};

        \draw (1,-1) -- (4,-1);
        \draw (4.3,-1) node {$\hat{A}^1$};
        \draw (1,1) -- (4,1);
        \draw (4.3,1) node {$\hat{A}^2$};
        \draw (3.5,-1.5) -- (3.5,1.5);
        \draw (3.5,-1.8) node {$\hat{B}$};
        \draw (1,0) -- (4,1.5);
        \draw (2.5,.5) node {$\hat{C}^2$};
        \draw (1,-.5) -- (2.5,-.5);
        \draw (2.8,-.5) node {$\hat{N}^1$};
        \draw[red] (1.5,-1.5) -- (1.5,.75);
        \draw[red] (1.5,-1.8) node {$\hat{M}$};
        \filldraw (1.5,.25) circle (2pt);
        \draw (1.2,.4) node {$q_{C^2}$};
        \filldraw (1.5,-.5) circle (2pt);
        \draw (1.8,-.2) node {$q_{N^1}$};
        \filldraw (1.5,-1) circle (2pt);
        \draw (1.8,-1.3) node {$q_{A^1}$};
    \end{tikzpicture}
    \end{center}

    Note that the strict transform of $N^1$ satisfies $\hat{N}^1\equiv\hat{A}^2+\hat{B}-\hat{M}$. We also have
    \[
    \begin{split}
        &\sigma_2^\ast A^1=\hat{A}^1+\hat{M},\quad\sigma_2^\ast A^2=\hat{A}^2,\quad\sigma_2^\ast B=\hat{B},\quad\sigma_2^\ast N^1=\hat{N}^1+\hat{M},\\
        &\sigma_2^\ast K_{S_2}=K_{\hat{S}_2}-\hat{M},\quad\sigma_2^\ast C^2=\hat{C}^2+\hat{M},
    \end{split}
    \]
    that directly imply
    \[
        A_{S_2,(1-\lambda)C^2}(\hat{M})=1+\lambda.
    \]
    
    The weak del Pezzo surface $\hat{S}_2$ is a Mori dream space, and its Mori cone is spanned by the classes $[\hat{A}^1]$, $[\hat{A}^2]$, $[\hat{B}]$, $[\hat{N}^1]$, and $[\hat{M}]$.
    We have the numerical equivalence
    \[
    \begin{split}
        \sigma_2^\ast\left(-K_{S_2}-(1-\lambda)C^2\right)-t\hat{M}&\equiv2\lambda\hat{A}^1+2\lambda\hat{A}^2+3\lambda\hat{B}+(2\lambda-t)\hat{M}\\
        &\equiv2\lambda\hat{A}^1+(4\lambda-t)\hat{A}^2+(5\lambda-t)\hat{B}+(t-2\lambda)\hat{N}^1,
    \end{split}
    \]
    and it is pseudoeffective only for $t$ not exceeding $4\lambda$.  Its Zariski decomposition is given by
    \begin{align*}
        P(t)&=\begin{cases}
            2\lambda\hat{A}^1+2\lambda\hat{A}^2+3\lambda\hat{B}+(2\lambda-t)\hat{M}, &0\leq t\leq\lambda,\\
            \frac{5\lambda-t}{2}\hat{A}^1+(4\lambda-t)\hat{A}^2+(5\lambda-t)\hat{B}+(t-2\lambda)\hat{N}^1, &\lambda\leq t\leq 2\lambda,\\
            \frac{5\lambda-t}{2}\hat{A}^1+(4\lambda-t)\hat{A}^2+(5\lambda-t)\hat{B}, &2\lambda\leq t\leq3\lambda,\\
            (4\lambda-t)(\hat{A}^1+\hat{A}^2+2\hat{B}), &3\lambda\leq t\leq 4\lambda;
        \end{cases}
        \\
        N(t)&=\begin{cases}
            0, &0\leq t\leq \lambda,\\
            \frac{t-\lambda}{2}\hat{A}^1, &\lambda\leq t\leq 2\lambda,\\
            \frac{t-\lambda}{2}\hat{A}^1+(t-2\lambda)\hat{N}^1, &2\lambda\leq t\leq3\lambda,\\
            (t-2\lambda)\hat{A}^1+(t-3\lambda)\hat{B}+(t-2\lambda)\hat{N}^1, &3\lambda\leq t\leq 4\lambda.
        \end{cases}
    \end{align*}
    Then,
    \[
        \mathrm{vol}\left(\sigma_2^\ast\left(-K_{S_2}-(1-\lambda)C^2\right)-t\hat{M}\right)=P(t)^2=\begin{cases}
            7\lambda^2-t^2, &0\leq t\leq\lambda,\\
            -\frac{1}{2}t^2-\lambda t+\frac{15}{2}\lambda^2, &\lambda\leq t\leq 2\lambda,\\
            \frac{1}{2}t^2-5\lambda t+\frac{23}{2}\lambda^2, &2\lambda\leq t\leq3\lambda,\\
            (4\lambda-t)^2, &3\lambda\leq t\leq 4\lambda,\\
        \end{cases}
    \]
    and hence,
    \[
        S_{S_2,(1-\lambda)C^2}(\hat{M})=2\lambda.
    \]
    From \eqref{2_Ei-CF_ub}, we obtain the upper bound
    \begin{equation}\label{２_CE-F_notflex_ub}
        \delta_p(S_2,(1-\lambda)C^2)\leq\min\left\{\frac{21}{23\lambda},\frac{1+\lambda}{2\lambda}\right\}.
    \end{equation}
    On the other hand, for each $q$ on $\hat{M}$, 
    \[
        h(\hat{M},q,t)=\begin{cases}
            \frac{t^2}{2}, &0\leq t\leq\lambda,\\
            \frac{t+\lambda}{2}\cdot\mathrm{ord}_q\frac{t-\lambda}{2}q_{A^1}+\frac{(t+\lambda)^2}{8}, &\lambda\leq t\leq2\lambda,\\
            \frac{5\lambda-t}{2}\cdot\mathrm{ord}_q\left(\frac{t-\lambda}{2}q_{A^1}+(t-2\lambda)q_{N^1}\right)+\frac{(5\lambda-t)^2}{8}, &2\lambda\leq t\leq3\lambda,\\
            (4\lambda-t)\cdot\mathrm{ord}_q((t-2\lambda)q_{A^1}+(t-2\lambda)q_{N^1}))+\frac{(4\lambda-t)^2}{2}, &3\lambda\leq t\leq4\lambda,
        \end{cases}
    \]
    and hence,
    \[
        S(W^{\hat{M}}_{\bullet,\bullet};q)=\begin{cases}
            \frac{23\lambda}{42}, &q\neq q_L,q_{A^1}\\
            \frac{19\lambda}{21}, &q=q_{N^1},\\
            \frac{23\lambda}{21}, &q=q_{A^1}.
        \end{cases}
    \]
    Put $K_{\hat{M}}+\Delta_{\hat{M}}:=(K_{\hat{S}_2}+(1-\lambda)\hat{C}+\hat{M})|_{\hat{M}}$, then
    \[
        A_{\hat{M},\Delta_{\hat{M}}}(q)=\begin{cases}
            1, &q\neq q_{C^2},\\
            \lambda, &q=q_{C^2}.
        \end{cases}
    \]
    It then follows from Theorem~\ref{AZ} that
    \begin{equation}\label{２_CE-F_notflex_lb}
        \delta_p(S_2,(1-\lambda)C^2)
        \geq\min\left\{\frac{1+\lambda}{2\lambda},\frac{42}{23\lambda},\frac{42}{23},\frac{21}{19\lambda},\frac{21}{23\lambda}\right\}
        =\begin{cases}
            \frac{21}{23\lambda}, &\frac{19}{23}\leq\lambda\leq 1,\\
            \frac{1+\lambda}{2\lambda}, &\frac{23}{61}\leq\lambda\leq\frac{19}{23},\\
            \frac{42}{23}, &0<\lambda\leq\frac{23}{61}.
        \end{cases}
    \end{equation}
    Consequently, \eqref{２_CE-F_notflex_ub} and \eqref{２_CE-F_notflex_lb} complete the proof.    
\end{proof}

\begin{lemma}\label{2_CA-B_flex}
    Suppose that $p$ is a point in  $C^2\cap (A^1\cup A^2)\setminus B$, and  that $\phi_2(p)$ is an inflection point of $\phi_2(C^2)$.  Then $$\min\left\{\frac{21}{23\lambda},\frac{21+42\lambda}{61\lambda},\frac{14}{5}\right\}\leq\delta_p(S_2,(1-\lambda)C^2)\leq\min\left\{\frac{21}{23\lambda},\frac{21+42\lambda}{61\lambda}\right\}.$$
    In particular, for $\lambda\geq\frac{15}{92}$, we have $$\delta_p(S_2,(1-\lambda)C^2)=\begin{cases}
        \frac{21}{23\lambda}, &\frac{19}{23}\leq\lambda\leq 1,\\
        \frac{21+42\lambda}{61\lambda}, &\frac{15}{92}\leq\lambda\leq\frac{19}{23}.
    \end{cases}$$
\end{lemma}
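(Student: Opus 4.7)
The plan is to mirror the strategy of Lemma~\ref{2_CA-B_notflex}, but to replace the ordinary blowup at $p$ with a weighted $(1,2)$-blowup that captures the tangential contact between $N^1$ and $C^2$. By symmetry I may assume $p \in A^1$, so $\phi_2(p) = x_1$. Since $\phi_2(p)$ is an inflection point of the plane cubic $\phi_2(C^2)$, the tangent line to $\phi_2(C^2)$ at $x_1$ meets the cubic with multiplicity three at $x_1$; a direct pullback computation on $S_2$ shows $N^1 \cdot C^2 = 2$ with the entire intersection supported at $p$, so $N^1$ is tangent to $C^2$ at $p$ with contact of order exactly two.

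Next I would define $\sigma_2 \colon \hat{S}_2 \to S_2$ as the $(1,2)$-blowup at $p$ with respect to $N^1$. The first ordinary blowup produces an exceptional $\mathbb{P}^1$ on which $N^1_1$ and $C^2_1$ still meet at their common tangent direction from $p$; the second blowup separates them; and the contraction $\tau_2$ collapses the intermediate $(-2)$-curve to an $A_1$ singularity $q_0$, through which $\hat{A}^1$ passes because $A^1_2$ meets that $(-2)$-curve. One then reads off $(\hat{M})^2 = -\tfrac{1}{2}$, $(\hat{A}^1)^2 = -\tfrac{3}{2}$, $\hat{A}^1 \cdot \hat{M} = \tfrac{1}{2}$, the pullback identities $\sigma_2^\ast A^1 = \hat{A}^1 + \hat{M}$, $\sigma_2^\ast N^1 = \hat{N}^1 + 2\hat{M}$, $\sigma_2^\ast C^2 = \hat{C}^2 + 2\hat{M}$, and the log discrepancy $A_{S_2,(1-\lambda)C^2}(\hat{M}) = 1 + 2\lambda$. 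Since $T^2_2$ is a weak del Pezzo surface, Proposition~\ref{MDS} identifies $\hat{S}_2$ as a Mori dream space with $\overline{NE}(\hat{S}_2) = \mathrm{Cone}\{[\hat{A}^1], [\hat{A}^2], [\hat{B}], [\hat{N}^1], [\hat{N}^2], [\hat{M}]\}$.

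With the setup in place, the next step is to compute the Zariski decomposition of $\sigma_2^\ast(-K_{S_2} - (1-\lambda)C^2) - t\hat{M}$ across its pseudoeffective range, keeping track of the successive thresholds at which $\hat{A}^1$, $\hat{N}^1$, and $\hat{B}$ enter the negative part as $t$ grows. Integrating $P(t)^2$ over this range should yield $S_{S_2,(1-\lambda)C^2}(\hat{M}) = \tfrac{61\lambda}{21}$, and combining with \eqref{2_Ei-CF_ub} then produces the upper bound $\min\{\tfrac{21}{23\lambda}, \tfrac{21+42\lambda}{61\lambda}\}$ on $\delta_p(S_2,(1-\lambda)C^2)$.

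For the matching lower bound, I would evaluate $h(\hat{M}, q, t)$ and hence $S(W^{\hat{M}}_{\bullet,\bullet}; q)$ at each of the three special points $q_0$, $q_{N^1}$, $q_{C^2}$ and at a generic point of $\hat{M}$, then combine with the log discrepancies $A_{\hat{M},\Delta_{\hat{M}}}(q_0) = \tfrac{1}{2}$, $A_{\hat{M},\Delta_{\hat{M}}}(q_{C^2}) = \lambda$, and $A_{\hat{M},\Delta_{\hat{M}}} = 1$ elsewhere. Applying Theorem~\ref{AZ} and taking the minimum of all candidate ratios together with the global contribution $\tfrac{21}{23\lambda}$ from \eqref{2_Ei-CF_ub} should leave precisely the three terms $\tfrac{21}{23\lambda}$, $\tfrac{21+42\lambda}{61\lambda}$ and $\tfrac{14}{5}$, which is the claimed bound. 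The main obstacle I expect is the bookkeeping of the Zariski decomposition itself: with six extremal rays in the Mori cone, identifying the correct order in which $P(t)$ loses nefness and verifying the positive part on each piece is by far the most delicate step, while the subsequent integrals and discrepancy ratios are routine.
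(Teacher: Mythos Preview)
Your plan is exactly the paper's approach: the $(1,2)$-blowup at $p$ with respect to $N^1$, the computation $S_{S_2,(1-\lambda)C^2}(\hat{M})=\tfrac{61\lambda}{21}$, and the Abban--Zhuang refinement on $\hat{M}$ yielding the three surviving ratios $\tfrac{21}{23\lambda}$, $\tfrac{21+42\lambda}{61\lambda}$, $\tfrac{14}{5}$. One small slip: since $p\in A^1$, the member $N^2$ of the second pencil through $p$ is the reducible curve $A^1+B$, so there is no extremal class $[\hat{N}^2]$ and the Mori cone of $\hat{S}_2$ is generated by the five classes $[\hat{A}^1],[\hat{A}^2],[\hat{B}],[\hat{N}^1],[\hat{M}]$; this does not affect your Zariski decomposition, which correctly involves only $\hat{A}^1$, $\hat{N}^1$, and $\hat{B}$ in the negative part.
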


\begin{proof}
As in the previous lemma,     we may assume that $p$ is the intersection point of $C^2$ and $A^1$. By the assumption, $N^1$ is tangent to $C^2$ at $p$. Denote the $(1,2)$-blowup at $p$ with respect to $N^1$ by $\sigma_2:\hat{S}_2\rightarrow (S_2,(1-\lambda)C^2)$. Note that $q_{A^1}$ is an $\mathrm{A}_1$ singularity.
     The construction of $\hat{S}_2$ is shown as follows:

    \begin{center}
    \begin{tikzpicture}[scale=0.7, every node/.style={scale=0.7}]
        \draw (-8,3) -- (-4,3);
        \draw (-3.75,3) node {$A^1$};
        \draw (-8,5) -- (-4,5);
        \draw (-3.75,5) node {$A^2$};
        \draw (-5,2) -- (-5,6);
        \draw (-5,1.7) node {$B$};
        \draw (-20/3,2) .. controls (-68/9,10/3) and (-20/3,14/3) .. (-4,6);
        \draw (-3.75,6) node {$C^2$};
        \draw (-7,2) -- (-7,4);
        \draw (-7,1.7) node {$N^1$};
        \filldraw[red] (-7,3) circle (2pt);
        \draw[red] (-7.3,3.3) node {$p$};

        \draw[->] (-2.5,4) --(-3.5,4);
        \draw (-3,4.3) node {$\pi^2_1$};
        \draw[->] (-2.5,.5) -- (-3.5,1.5);
        \draw (-3.2,.8) node {$\sigma_2$};

        \draw (-2,2.5) -- (2,2.5);
        \draw (2.25,2.5) node {$A^1_1$};
        \draw (-2,5.5) -- (2,5.5);
        \draw (2.25,5.5) node {$A^2_1$};
        \draw (1,2) -- (1,6);
        \draw (1,1.7) node {$B_1$};
        \draw (-2,10/3) -- (2,6);
        \draw (-2.25,10/3) node {$C^2_1$};
        \draw (-2,14/3) -- (0,10/3);
        \draw (-2.25,14/3) node {$N^1_1$};
        \draw[red] (-1,2) -- (-1,14/3);
        \draw[red] (-1,1.7) node {$M^1_1$};
        \filldraw[blue] (-1,4) circle (2pt);

        \draw[->] (3.5,4) -- (2.5,4);
        \draw (3,4.3) node {$\pi^2_2$};
        \draw[->] (3.5,1.5) -- (2.5,.5);
        \draw (3.2,.8) node {$\tau_2$};

        \draw (4,2.5) -- (8,2.5);
        \draw (8.25,2.5) node {$A^1_2$};
        \draw (4,5.5) -- (8,5.5);
        \draw (8.25,5.5) node {$A^2_2$};
        \draw (7.5,2) -- (7.5,6);
        \draw (7.5,1.7) node {$B_2$};
        \draw (6,3) -- (8,6);
        \draw (6.4,3) node {$C^2_2$};
        \draw (4.5,3) -- (4.5,5);
        \draw (4.2,5) node {$N^1_2$};
        \draw[red] (5.5,2) -- (5.5,5);
        \draw[red] (5.5,1.7) node {$M^1_2$};
        \draw[blue] (4,4) -- (7,4);
        \draw[blue] (4,3.7) node {$M^2_2$};

        \draw (-2,-3.5) -- (2,-3.5);
        \draw (2.25,-3.5) node {$\hat{A}^1$};
        \draw (-2,-.5) -- (2,-.5);
        \draw (2.25, -.5) node {$\hat{A}^2$};
        \draw (1.5,-4) -- (1.5,0);
        \draw (1.5,.3) node {$\hat{B}$};
        \draw (-2,-2) -- (2,0);
        \draw (-2.3,-2) node {$\hat{C}^2$};
        \draw (-2,-2.5) -- (0,-2.5);
        \draw (.3,-2.5) node {$\hat{N}^1$};
        \draw[blue] (-1,-4) -- (-1,-1);
        \draw[blue] (-1.3,-1) node {$\hat{M}$};
        \filldraw[red] (-1,-3.5) circle (2pt);
        \draw[red] (-.3,-3.1) node {$\frac{1}{2}(1,1)$};
        \draw[red] (-1.3,-3.2) node {$q_{A^1}$};
        \filldraw (-1,-2.5) circle (2pt);
        \draw (-1.3,-2.2) node {$q_{N^1}$};
        \filldraw (-1,-1.5) circle (2pt);
        \draw (-.6,-1.7) node {$q_{C^2}$};
    \end{tikzpicture}
    \end{center}

    We have $\hat{N}^1\equiv\hat{A}^2+\hat{B}-2\hat{M}$ and
    \[
    \begin{split}
        &\sigma_2^\ast A^1=\hat{A}^1+\hat{M},\quad\sigma_2^\ast A^2=\hat{A}^2,\quad\sigma_2^\ast B=\hat{B},\quad\sigma_2^\ast N^1=\hat{N}^1+2\hat{M},\\
        &\sigma_2^\ast K_{S_2}=K_{\hat{S}_2}-2\hat{M},\quad\sigma_2^\ast C^2=\hat{C}^2+2\hat{M}.
    \end{split}
    \]
    It then follows that
    \[
        A_{S_2,(1-\lambda)C^2}(\hat{M})=1+2\lambda.
    \]
    
    The pullbacks by $\sigma_2$ yield the intersections as follows:
        \[
    \begin{split}
        &(\hat{A}^1)^2=-\frac{3}{2},\quad (\hat{N}^1)^2=-2,\quad\hat{M}^2=-\frac{1}{2},\quad(\hat{A}^2)^2=\hat{B}^2=-1,\quad\hat{A}^1\cdot\hat{M}=\frac{1}{2}\\
        &\hat{A}^1\cdot\hat{A}^2=\hat{A}^1\cdot\hat{N}^1=\hat{A}^2\cdot\hat{N}^1=\hat{A}^2\cdot\hat{M}=\hat{B}\cdot\hat{N}^1=\hat{B}\cdot{M}=0,\quad\hat{A}^1\cdot\hat{B}=\hat{A}^2\cdot\hat{B}=\hat{N}^1\cdot\hat{M}=1.
    \end{split}
    \]

    The surface $\hat{S}_2$ is a Mori dream space, and its Mori cone is
    \[
        \overline{NE}(\hat{S}_2)=\mathrm{Cone}\{[\hat{A}^1],[\hat{A}^2],[\hat{B}],[\hat{N}^1],[\hat{M}]\},
    \]    
    by Proposition~\ref{MDS}. We have
    \[
    \begin{split}
        \sigma_2^\ast\left(-K_{S_2}-(1-\lambda)C^2\right)-t\hat{M}&\equiv2\lambda\hat{A}^1+2\lambda\hat{A}^2+3\lambda\hat{B}+(2\lambda-t)\hat{M}\\
        &\equiv2\lambda\hat{A}^1+\frac{6\lambda-t}{2}\hat{A}^2+\frac{8\lambda-t}{2}\hat{B}+\frac{t-2\lambda}{2}\hat{N}^1
    \end{split}
    \]
    and it is pseudoeffective only for $t\leq 6\lambda$. The Zariski decomposition is given by
    \begin{align*}
        P(t)&=\begin{cases}
            2\lambda\hat{A}^1+2\lambda\hat{A}^2+3\lambda\hat{B}+(2\lambda-t)\hat{M}, &0\leq t\leq2\lambda,\\
            \frac{8\lambda-t}{6}(2\hat{A}^1+3\hat{B})+\frac{6\lambda-t}{2}\hat{A}^2, &2\lambda\leq t\leq 5\lambda,\\
            \frac{6\lambda-t}{2}(2\hat{A}^1+\hat{A}^2+3\hat{B}), &5\lambda\leq t\leq 6\lambda;
        \end{cases}
        \\
        N(t)&=\begin{cases}
            0, &0\leq t\leq 2\lambda,\\
            \frac{t-2\lambda}{6}(2\hat{A}^1+3\hat{N}^1), &2\lambda\leq t\leq5\lambda,\\
            (t-4\lambda)\hat{A}^1+(t-5\lambda)\hat{B}+\frac{t-2\lambda}{2}\hat{N}^1, &5\lambda\leq t\leq 6\lambda.
        \end{cases}
    \end{align*}
    We then obtain the volume function
    \[
        \mathrm{vol}\left(\sigma_2^\ast\left(-K_{S_2}-(1-\lambda\right)C^2)-t\hat{M}\right)=P(t)^2=\begin{cases}
            7\lambda^2-\frac{1}{2}t^2, &0\leq t\leq2\lambda,\\
            \frac{1}{6}t^2-\frac{8}{3}\lambda t+\frac{29}{3}\lambda^2, &2\lambda\leq t\leq 5\lambda,\\
            \frac{(6\lambda-t)^2}{2}, &5\lambda\leq t\leq 6\lambda,\\
        \end{cases}
    \]
    and hence,
    \[
        S_{S_2,(1-\lambda)C^2}(\hat{M})=\frac{61\lambda}{21}.
    \]
    From \eqref{2_Ei-CF_ub}, the upper bound is given by
    \begin{equation}\label{２_CE-F_flex_ub}
        \delta_p(S_2,(1-\lambda)C^2)\leq\min\left\{\frac{21}{23\lambda},\frac{21+42\lambda}{61\lambda}\right\}.
    \end{equation}
   For each $q$ on $\hat{M}$, 
    \[
        h(\hat{M},q,t)=\begin{cases}
            \frac{t^2}{8}, &0\leq t\leq2\lambda,\\
            \frac{8\lambda-t}{6}\cdot\mathrm{ord}_q\left(\frac{t-2\lambda}{6}(q_{A^1}+3q_{N^1})\right)+\frac{(8\lambda-t)^2}{72}, &2\lambda\leq t\leq5\lambda,\\
            \frac{6\lambda-t}{2}\cdot\mathrm{ord}_q\left(\frac{t-4\lambda}{2}q_{A^1}+\frac{t-2\lambda}{2}q_{N^1})\right)+\frac{(6\lambda-t)^2}{8}, &5\lambda\leq t\leq6\lambda,
        \end{cases}
    \]
    and hence,
    \[
        S(W^{\hat{M}}_{\bullet,\bullet};q)=\begin{cases}
            \frac{5\lambda}{14}, &q\neq q_L,q_{A^1}\\
            \frac{19\lambda}{21}, &q=q_{N^1},\\
            \frac{23\lambda}{42}, &q=q_{A^1}.
        \end{cases}
    \]
    Put $K_{\hat{M}}+\Delta_{\hat{M}}:=(K_{\hat{S}_2}+(1-\lambda)\hat{C}+\hat{M})|_{\hat{M}}$. Then 
    \[
        A_{\hat{M},\Delta_{\hat{M}}}(q)=\begin{cases}
            1, &q\neq q_{A^1},q_{C^2},\\
            \frac{1}{2}, &q=q_{A^1},\\
            \lambda, &q=q_{C^2}.
        \end{cases}
    \]
    It then follows from Theorem~\ref{AZ} that
    \begin{equation}\label{２_CE-F_flex_lb}
        \delta_p(S_2,(1-\lambda)C^2)
        \geq\min\left\{\frac{21+42\lambda}{61\lambda},\frac{14}{5\lambda},\frac{14}{5},\frac{21}{19\lambda},\frac{21}{23\lambda}\right\}=\begin{cases}
            \frac{21}{23\lambda}, &\frac{19}{23}\leq\lambda\leq 1,\\
            \frac{21+42\lambda}{61\lambda}, &\frac{15}{92}\leq\lambda\leq\frac{19}{23},\\
            \frac{14}{5}, &0<\lambda\leq\frac{15}{92}.
        \end{cases}
    \end{equation}
    The proof is obtained by combining \eqref{２_CE-F_flex_ub} and \eqref{２_CE-F_flex_lb}.    
\end{proof}

\begin{lemma}\label{2_CB-A_notflex}
    Suppose that $p$ is the intersection point of $ C^2$ and $B$ that is not contained in $A^1\cup A^2$, and that $\phi_2(p)$ is not an inflection point of $\phi_2(C^2)$. Then
    $$\min\left\{\frac{21}{25\lambda},\frac{21+42\lambda}{55\lambda},\frac{63}{29}\right\}\leq\delta_p(S_2,(1-\lambda)C^2)\leq\min\left\{\frac{21}{25\lambda},\frac{21+42\lambda}{55\lambda}\right\}.$$ In particular, for $\lambda\geq\frac{29}{107}$, we have $$\delta_p(S_2,(1-\lambda)C^2)=\min\left\{\frac{21}{25\lambda},\frac{21+42\lambda}{55\lambda}\right\}=\begin{cases}
        \frac{21}{25\lambda}, &\frac{3}{5}\leq\lambda\leq 1,\\
        \frac{21+42\lambda}{55}, &\frac{29}{107}\leq\lambda\leq\frac{3}{5}.
    \end{cases}$$
\end{lemma}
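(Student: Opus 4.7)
The plan is to apply Theorem~\ref{AZ} to a $(1,2)$-blowup at $p$ that captures the tangent direction of $C^2$ at $p$, and to combine the resulting estimate with the upper bound $\delta_p\leq\frac{21}{25\lambda}$ already obtained from the divisor $B$ in~\eqref{2_F-C_ub}. Let $L$ denote the strict transform of the tangent line to $\phi_2(C^2)$ at $\phi_2(p)$. Since $\phi_2(p)$ is not an inflection point, $L$ meets $C^2$ with multiplicity exactly two at $p$; in the generic situation one also has $L\neq B$ (the degenerate case $L=B$, in which $B$ is itself tangent to $C^2$ at $p$, would require separate handling). Let $\sigma_2\colon\hat{S}_2\to(S_2,(1-\lambda)C^2)$ denote the $(1,2)$-blowup at $p$ with respect to $L$, with exceptional divisor $\hat{M}$. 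Because $B$ passes through $p$ transversely to $L$, the strict transform $\hat{B}$ meets $\hat{M}$ at the unique $\mathrm{A}_1$ singularity of $\hat{S}_2$, denoted $q_B$; the strict transforms $\hat{L}$ and $\hat{C}^2$ meet $\hat{M}$ at smooth points $q_L$ and $q_{C^2}$, respectively.

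The first step is to record the pullback identities
\[\sigma_2^\ast A^i=\hat{A}^i,\quad \sigma_2^\ast B=\hat{B}+\hat{M},\quad \sigma_2^\ast L=\hat{L}+2\hat{M},\quad \sigma_2^\ast C^2=\hat{C}^2+2\hat{M},\quad \sigma_2^\ast K_{S_2}=K_{\hat{S}_2}-2\hat{M},\]
which give $A_{S_2,(1-\lambda)C^2}(\hat{M})=1+2\lambda$. The relevant intersection numbers on $\hat{S}_2$ follow from the projection formula; in particular $\hat{M}^2=-\tfrac{1}{2}$, $\hat{B}^2=-\tfrac{3}{2}$, $\hat{B}\cdot\hat{M}=\tfrac{1}{2}$, $\hat{L}^2=-1$, and $\hat{L}\cdot\hat{M}=1$. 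Since $\hat{S}_2$ is obtained from the weak del Pezzo surface $T_2$ by contracting a single $\mathrm{A}_1$-chain, Proposition~\ref{MDS} guarantees that $\hat{S}_2$ is a Mori dream space with Mori cone generated by $[\hat{A}^1]$, $[\hat{A}^2]$, $[\hat{B}]$, $[\hat{L}]$, $[\hat{M}]$.

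The central computation is then the Zariski decomposition of
\[\sigma_2^\ast(-K_{S_2}-(1-\lambda)C^2)-t\hat{M}\equiv 2\lambda\hat{A}^1+2\lambda\hat{A}^2+3\lambda\hat{B}+(3\lambda-t)\hat{M},\]
which is most efficiently obtained by rewriting the divisor in Mori cone generators via $\hat{A}^1+\hat{A}^2+\hat{B}\equiv\hat{L}+\hat{M}$, locating the pseudoeffective threshold, and identifying the successive intervals on which $\hat{B}$ and then the pair $\hat{A}^1,\hat{A}^2$ enter the negative part. Integrating $P(t)^2$ against $t$ should yield $S_{S_2,(1-\lambda)C^2}(\hat{M})=\frac{55\lambda}{21}$, and together with \eqref{2_F-C_ub} this gives the claimed upper bound $\delta_p\leq\min\{\frac{21}{25\lambda},\frac{21+42\lambda}{55\lambda}\}$.

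For the lower bound, the plan is to compute $S(W^{\hat{M}}_{\bullet,\bullet};q)$ via the integrand $h(\hat{M},q,t)$ at a generic point of $\hat{M}$ and at each of the special points $q_L,q_{C^2},q_B$, tracking carefully how each component of $N(t)|_{\hat{M}}$ meets $\hat{M}$ on each subinterval. The singular point $q_B$ should contribute the bound $\frac{21}{25\lambda}$, reflecting that blowing up $q_B$ extracts a divisor whose numerical class is a positive multiple of the pullback of $B$, while $q_{C^2}$ should produce $\frac{63}{29}$ via $A_{\hat{M},\Delta_{\hat{M}}}(q_{C^2})=\lambda$ together with $S(W^{\hat{M}}_{\bullet,\bullet};q_{C^2})=\frac{29\lambda}{63}$. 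Combined with $A_{\hat{M},\Delta_{\hat{M}}}(q_B)=\tfrac{1}{2}$ and log discrepancy $1$ at the other points, Theorem~\ref{AZ} then yields $\delta_p\geq\min\{\frac{21+42\lambda}{55\lambda},\frac{21}{25\lambda},\frac{63}{29},\ldots\}$ with the omitted terms dominated in the stated range. The principal obstacle is the careful bookkeeping across the several subintervals of the Zariski decomposition in order to isolate precisely the constants $\frac{55\lambda}{21}$ and $\frac{29\lambda}{63}$ on which the stated bounds depend.
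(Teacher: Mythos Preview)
Your plan coincides with the paper's: take the $(1,2)$-blowup along the tangent line $L\in|A^1+A^2+B|$ and apply Theorem~\ref{AZ}, combining with the bound~\eqref{2_F-C_ub} from $B$. Two corrections are in order.

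First, the caveat about $L=B$ is unnecessary. Since $p\notin A^1\cup A^2$, the three points $x_1,x_2,\phi_2(p)$ are distinct and all lie on both $\phi_2(B)$ and $\phi_2(C^2)$; by B\'ezout the line meets the cubic transversely at each, so $\phi_2(B)$ is never the tangent at $\phi_2(p)$.

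Second, and more substantively, your anticipated Zariski decomposition is wrong: after $\hat B$ enters the negative part at $t=2\lambda$, it is $\hat L$, not the pair $\hat A^1,\hat A^2$, that enters next (at $t=3\lambda$). Indeed, on the interval $[2\lambda,3\lambda]$ one has $P(t)\cdot\hat A^i=\tfrac{5\lambda-t}{3}>0$ while $P(t)\cdot\hat L=3\lambda-t$; the curves $\hat A^1,\hat A^2$ never appear in $N(t)$, and the pseudoeffective threshold is $5\lambda$ with intervals $[0,2\lambda]$, $[2\lambda,3\lambda]$, $[3\lambda,5\lambda]$. With this correction your target values $S_{S_2,(1-\lambda)C^2}(\hat M)=\tfrac{55\lambda}{21}$, $S(W^{\hat M}_{\bullet,\bullet};q_{C^2})=\tfrac{29\lambda}{63}$, $S(W^{\hat M}_{\bullet,\bullet};q_B)=\tfrac{25\lambda}{42}$, and $S(W^{\hat M}_{\bullet,\bullet};q_L)=\tfrac{5\lambda}{7}$ do come out as stated, and the bound follows.
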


\begin{proof}
    Take the curve $L$ in $|A^1+A^2+B|$ that is tangent to $C^2$ at $p$. Let $\sigma_2:\hat{S}_2\rightarrow (S_2,(1-\lambda)C^2)$ be the $(1,2)$-blowup at $p$ with respect to $L$. We can check that $q_B$ is an $\mathrm{A}_1$ singularity. 
    This can be illustrated as follows:
    \begin{center}
    \begin{tikzpicture}[scale=0.7, every node/.style={scale=0.7}]
        \draw (-8.25,2.5) -- (-3.75,2.5);
        \draw (-3.5,2.5) node {$A^1$};
        \draw (-8.25,5.5) -- (-3.75,5.5);
        \draw (-3.5,5.5) node {$A^2$};
        \draw (-20/3,2) -- (-20/3,6);
        \draw (-20/3,6.3) node {$B$};
        \draw (-8,2) .. controls (-20/3,28/3) and (-16/3,-4/3) .. (-4,6);
        \draw (-4,6.3) node {$C^2$};
        \draw (-22/3,5) -- (-4,10/3);
        \draw (-3.75,10/3) node {$L$};
        \filldraw[red] (-20/3,14/3) circle (2pt);
        \draw[red] (-19/3,4.9) node {$p$};

        \draw[->] (-2.5,4) --(-3.5,4);
        \draw (-3,4.3) node {$\pi^2_1$};
        \draw[->] (-2.5,.5) -- (-3.5,1.5);
        \draw (-3.2,.8) node {$\sigma_2$};

        \draw (-2,2.5) -- (2,2.5);
        \draw (2.25,2.5) node {$A^1_1$};
        \draw (-2,5.5) -- (2,5.5);
        \draw (2.25,5.5) node {$A^2_1$};
        \draw (1.5,2) -- (1.5,6);
        \draw (1.5,6.3) node {$B_1$};
        \draw (-1.5,2) .. controls (-5/6,28/3) and (-1/6,-4/3) .. (.5,6);
        \draw (.5,6.3) node {$C^2_1$};
        \draw (-.5,3.5) -- (.5,189/40);
        \draw (-.5,3.2) node {$L_1$};
        \draw[red] (0,217/48) -- (1.75,217/48);
        \draw[red] (2.05,217/48) node {$M^1_1$};
        \filldraw[blue] (1/3,217/48) circle (2pt);

        \draw[->] (3.5,4) -- (2.5,4);
        \draw (3,4.3) node {$\pi^2_2$};
        \draw[->] (3.5,1.5) -- (2.5,.5);
        \draw (3.2,.8) node {$\tau_2$};

        \draw (4,2.5) -- (8,2.5);
        \draw (8.25,2.5) node {$A^1_2$};
        \draw (4,5.5) -- (8,5.5);
        \draw (8.25,5.5) node {$A^2_2$};
        \draw (7.5,2) -- (7.5,6);
        \draw (7.5,6.3) node {$B_2$};
        \draw (4.5,2) -- (4.5,6);
        \draw (4.5,6.3) node {$C^2_2$};
        \draw (4,3.25) -- (5.75,5);
        \draw (3.75,3.25) node {$L_2$};
        \draw[red] (6.25,5) -- (8,3.25);
        \draw[red] (8.25,3.25) node {$M^1_2$};
        \draw[blue] (4,4.5) -- (7,4.5);
        \draw[blue] (3.75,4.5) node {$M^2_2$};

        \draw (-2,-3.5) -- (2,-3.5);
        \draw (2.25,-3.5) node {$\hat{A}^1$};
        \draw (-2,-.5) -- (2,-.5);
        \draw (2.25,-.5) node {$\hat{A}^2$};
        \draw (1.5,-4) -- (1.5,0);
        \draw (1.5,.3) node {$\hat{B}$};
        \draw (-1.5,-4) -- (-1.5,0);
        \draw (-1.5,.3) node {$\hat{C}^2$};
        \draw (-2,-2.5) -- (1,-1);
        \draw (-2.25,-2.5) node {$\hat{L}$};
        \draw[blue] (-2,-1.5) -- (2,-1.5);
        \draw[blue] (-2.25,-1.5) node {$\hat{M}$};
        \filldraw (-1.5,-1.5) circle (2pt);
        \draw (-1.2,-1.2) node {$q_{C^2}$};
        \filldraw (0,-1.5) circle (2pt);
        \draw (-.3,-1.2) node {$q_L$};
        \filldraw[red] (1.5,-1.5) circle (2pt);
        \draw[red] (2.2,-1.9) node {$\frac{1}{2}(1,1)$};
        \draw[red] (1.2,-1.8) node {$q_B$};
    \end{tikzpicture}
    \end{center}
    We  have $\hat{L}\equiv \hat{A}^1+\hat{A}^2+\hat{B}-\hat{M}$ and
    \[
    \begin{split}
        &\sigma_2^\ast A^1=\hat{A}^1,\quad\sigma_2^\ast A^2=\hat{A}^2,\quad\sigma_2^\ast B=\hat{B}+\hat{M},\quad\sigma_2^\ast L=\hat{L}+2\hat{M},\\
        &\sigma_2^\ast K_{S_2}=K_{\hat{S}_2}-2\hat{M},\quad\sigma_2^\ast C^2=\hat{C}^2+2\hat{M},
    \end{split}
    \]
    which imply
    \[
        A_{S_2,(1-\lambda)C^2}(\hat{M})=1+2\lambda.
    \]

    The intersections on $\hat{S}_2$ are given as follows:
        \[
    \begin{split}
        &(\hat{A}^i)^2=\hat{L}^2=-1,\quad\hat{B}^2=-\frac{3}{2},\quad\hat{M}^2=-\frac{1}{2},\quad\hat{B}\cdot\hat{M}=\frac{1}{2},\\
        &\hat{A}^i\cdot\hat{A}^{3-i}=\hat{A}^i\cdot\hat{L}=\hat{A}^i\cdot\hat{M}=\hat{B}\cdot\hat{L}=0,\quad\hat{A}^i\cdot\hat{B}=\hat{L}\cdot\hat{M}=1,\quad\text{for }i=1,2.
    \end{split}
    \]

    As in the previous Lemmas, the surface $\hat{S}_2$ is a Mori dream space, and its Mori cone is spanned by 
    $[\hat{A}^1]$, $[\hat{A}^2]$, $[\hat{B}]$,  $[\hat{L}]$, and $[\hat{M}]$.   
    We compute
    \[
    \begin{split}
        \sigma_2^\ast\left(-K_{S_2}-(1-\lambda)C^2\right)-t\hat{M}&\equiv2\lambda\hat{A}^1+2\lambda\hat{A}^2+3\lambda\hat{B}+(3\lambda-t)\hat{M}\\
        &\equiv(5\lambda-t)\hat{A}^1+(5\lambda-t)\hat{A}^2+(6\lambda-t)\hat{B}+(t-3\lambda)\hat{L}.
    \end{split}
    \]
    The divisor  is pseudoeffective only for $t$ not exceeding $5\lambda$. We compute its Zariski decomposition as follows:
        \begin{align*}
      P(t)=\begin{cases}
           2\lambda\hat{A}^1+2\lambda\hat{A}^2+3\lambda\hat{B}+(3\lambda-t)\hat{M}\\
           \frac{5\lambda-t}{3}(3\hat{A}^1+3\hat{A}^2+4\hat{B})+(t-3\lambda)\hat{L}\\
           \frac{5\lambda-t}{3}(3\hat{A}^1+3\hat{A}^2+4\hat{B})
       \end{cases};\quad
        N(t)=\begin{cases}
           0, & 0\leq t\leq 2\lambda,\\
          \frac{t-2\lambda}{3}\hat{B}, & 2\lambda\leq t\leq3\lambda,\\
           \frac{t-2\lambda}{3}\hat{B}+(t-3\lambda)\hat{L}, & 3\lambda\leq t\leq 5\lambda.
       \end{cases}
    \end{align*}
    This implies
    \[
        \mathrm{vol}\left(\sigma_2^\ast\left(-K_{S_2}-(1-\lambda)C^2\right)-t\hat{M}\right)=P(t)^2=\begin{cases}
            7\lambda^2-\frac{1}{2}t^2, &0\leq t\leq2\lambda,\\
            -\frac{1}{3}t^2-\frac{2\lambda}{3}t+\frac{23}{3}\lambda^2, &2\lambda\leq t\leq3\lambda,\\
            \frac{2(5\lambda-t)^2}{3}, &3\lambda\leq t\leq 5\lambda,\\
        \end{cases}
    \]
    and hence,
    \[
        S_{S_2,(1-\lambda)C^2}(\hat{M})=\frac{55\lambda}{21}.
    \]
    From \eqref{2_F-C_ub}, we obtain the upper bound
    \begin{equation}\label{２_CF-E_notflex_ub}
        \delta_p(S_2,(1-\lambda)C^2)\leq\min\left\{\frac{21}{25\lambda},\frac{21+42\lambda}{55\lambda}\right\}.
    \end{equation}
    On the other hand, for each $q$ on $\hat{M}$, we compute the integrand in \eqref{eq:SS} as follows:
    \[
        h(\hat{M},q,t)=\begin{cases}
            \frac{t^2}{8}, &0\leq t\leq2\lambda,\\
            \frac{t+\lambda}{3}\cdot\mathrm{ord}_q\frac{t-2\lambda}{6}q_B+\frac{(t+\lambda)^2}{18}, &2\lambda\leq t\leq3\lambda,\\
            \frac{10\lambda-2t}{3}\cdot\mathrm{ord}_q\left(\frac{t-2\lambda}{6}q_B+(t-3\lambda)q_L)\right)+\frac{2(5\lambda-t)^2}{9}, &3\lambda\leq t\leq5\lambda,
        \end{cases}
    \]
    and hence,
    \[
        S(W^{\hat{M}}_{\bullet,\bullet};q)=\begin{cases}
            \frac{29\lambda}{63}, &q\neq q_L,q_B\\
            \frac{5\lambda}{7}, &q=q_L,\\
            \frac{25\lambda}{42}, &q=q_B.
        \end{cases}
    \]
    Put $K_{\hat{M}}+\Delta_{\hat{M}}:=(K_{\hat{S}_2}+(1-\lambda)\hat{C}+\hat{M})|_{\hat{M}}$. Then
    \[
        A_{\hat{M},\Delta_{\hat{M}}}(q)=\begin{cases}
            1, &q\neq q_B,q_{C^2},\\
            \frac{1}{2}, &q=q_B,\\
            \lambda, &q=q_{C^2}.
        \end{cases}
    \]
    Theorem~\ref{AZ} now gives the lower bound
    \begin{equation}\label{２_CF-E_notflex_lb}
        \delta_p(S_2,(1-\lambda)C^2)
        \geq\min\left\{\frac{21+42\lambda}{55\lambda},\frac{63}{29\lambda},\frac{63}{29},\frac{7}{5\lambda},\frac{21}{25\lambda}\right\}=\begin{cases}
            \frac{21}{25\lambda}, &\frac{3}{5}\leq\lambda\leq 1,\\
            \frac{21+42\lambda}{55\lambda}, &\frac{29}{107}\leq\lambda\leq\frac{3}{5},\\
            \frac{63}{29}, &0<\lambda\leq\frac{29}{107}.
        \end{cases}
    \end{equation}
    Then, \eqref{２_CF-E_notflex_ub} and \eqref{２_CF-E_notflex_lb} complete the proof.
\end{proof}

\begin{lemma}\label{2_CB-A_flex}
    Suppose that $p$ is the intersection point $C^2$ and $B$  that is not contained in $A^1\cup A^2$, and that $\phi_2(p)$ is an inflection point of $\phi_2(C^2)$. Then $$\min\left\{\frac{21}{25\lambda},\frac{3+9\lambda}{10\lambda},3\right\}\leq\delta_p(S_2,(1-\lambda)C^2)\leq\min\left\{\frac{21}{25\lambda},\frac{3+9\lambda}{10\lambda}\right\}.$$ In particular, for $\lambda\geq\frac{1}{7}$, we have $$\delta_p(S_2,(1-\lambda)C^2)=\min\left\{\frac{21}{25\lambda},\frac{3+9\lambda}{10\lambda}\right\}=\begin{cases}
        \frac{21}{25\lambda}, &\frac{3}{5}\leq\lambda\leq1,\\
        \frac{3+9\lambda}{10\lambda}, &\frac{1}{7}\leq\lambda\leq\frac{3}{5}.
    \end{cases}$$
\end{lemma}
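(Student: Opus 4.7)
The plan is to parallel the proof of Lemma~\ref{2_CB-A_notflex}, with the key modification that the inflection hypothesis raises the tangency order of the relevant tangent line from $2$ to $3$, so the appropriate plt blowup is a $(1,3)$-blowup rather than a $(1,2)$-blowup. Concretely, let $L$ be the strict transform of the tangent line to $\phi_2(C^2)$ at $\phi_2(p)$; since $\phi_2(p)$ is an inflection point distinct from $x_1, x_2$, the curve $L$ lies in $|A^1+A^2+B|$ and meets $C^2$ only at $p$, with multiplicity $3$. I would define $\sigma_2:\hat{S}_2\to S_2$ to be the $(1,3)$-blowup at $p$ with respect to $L$, with interior exceptional $\hat{M}$. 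Because $B$ is smooth and transverse to $L$ at $p$, the contraction of the two intermediate exceptionals produces an $\mathrm{A}_2$ singularity $q_B \in \hat{B}\cap\hat{M}$, while $\hat{L}$ and $\hat{C}^2$ meet $\hat{M}$ at two further distinct smooth points $q_L$ and $q_{C^2}$. The standard pullback formulas then give $\sigma_2^*B=\hat{B}+\hat{M}$, $\sigma_2^*L=\hat{L}+3\hat{M}$, $\sigma_2^*C^2=\hat{C}^2+3\hat{M}$, $\sigma_2^*K_{S_2}=K_{\hat{S}_2}-3\hat{M}$, whence $A_{S_2,(1-\lambda)C^2}(\hat{M})=1+3\lambda$ and $\hat{L}\equiv\hat{A}^1+\hat{A}^2+\hat{B}-2\hat{M}$.

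Next I would tabulate the crucial intersection numbers: $\hat{M}^2=-\tfrac{1}{3}$, $\hat{B}^2=-\tfrac{4}{3}$, $\hat{L}^2=-2$, $\hat{B}\cdot\hat{M}=\tfrac{1}{3}$, $\hat{L}\cdot\hat{M}=1$, $\hat{B}\cdot\hat{L}=0$, with the $(-1)$-curves $\hat{A}^i$ retaining their values from $S_2$. Proposition~\ref{MDS} then identifies $\hat{S}_2$ as a Mori dream space whose Mori cone is spanned by $[\hat{A}^1],[\hat{A}^2],[\hat{B}],[\hat{L}],[\hat{M}]$. Writing
\[
\sigma_2^*(-K_{S_2}-(1-\lambda)C^2)-t\hat{M}\equiv 2\lambda(\hat{A}^1+\hat{A}^2)+3\lambda\hat{B}+(3\lambda-t)\hat{M},
\]
one checks $P(t)\cdot\hat{B}=\lambda-t/3$ and $P(t)\cdot\hat{L}=3\lambda-t$, which vanish simultaneously at $t=3\lambda$. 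For $t\in[3\lambda,7\lambda]$ both $\hat{B}$ and $\hat{L}$ enter $N(t)$ together, and solving the orthogonality system $P(t)\cdot\hat{B}=P(t)\cdot\hat{L}=0$ yields
\[
N(t)=\tfrac{t-3\lambda}{4}\hat{B}+\tfrac{t-3\lambda}{2}\hat{L},\qquad P(t)=\tfrac{7\lambda-t}{4}\bigl(2\hat{A}^1+2\hat{A}^2+3\hat{B}\bigr),
\]
with pseudoeffective threshold $\tau=7\lambda$.

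Integrating $P(t)^2=7\lambda^2-t^2/3$ on $[0,3\lambda]$ and $(7\lambda-t)^2/4$ on $[3\lambda,7\lambda]$ gives $S_{S_2,(1-\lambda)C^2}(\hat{M})=10\lambda/3$, hence $A/S=(3+9\lambda)/(10\lambda)$; combined with the $B$-bound $21/(25\lambda)$ this yields the asserted upper bound. For the lower bound, using $\hat{B}\cdot\hat{M}=1/3$ and $\hat{L}\cdot\hat{M}=1$ one finds $N(t)|_{\hat{M}}=\tfrac{t-3\lambda}{12}\,q_B+\tfrac{t-3\lambda}{2}\,q_L$ on the second phase, together with $P(t)\cdot\hat{M}=t/3$ on phase one and $(7\lambda-t)/4$ on phase two. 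Integration then produces $S(W^{\hat{M}}_{\bullet,\bullet};q)=\lambda/3$ at generic points, $25\lambda/63$ at $q_B$, and $5\lambda/7$ at $q_L$. Coupling with $A_{\hat{M},\Delta_{\hat{M}}}(q)$ equal to $1$ generically, $1/3$ at the $\mathrm{A}_2$ point $q_B$, and $\lambda$ at $q_{C^2}$ gives the ratios $3/\lambda$, $21/(25\lambda)$, $7/(5\lambda)$, and $3$, respectively. Since $3/\lambda\geq 3$ and $7/(5\lambda)\geq 21/(25\lambda)$ for $\lambda\in(0,1]$, Theorem~\ref{AZ} delivers $\delta_p(S_2,(1-\lambda)C^2)\geq\min\bigl\{21/(25\lambda),(3+9\lambda)/(10\lambda),3\bigr\}$, matching the upper bound; the piecewise refinement follows from the elementary comparisons $3\geq(3+9\lambda)/(10\lambda)\iff\lambda\geq 1/7$ and $21/(25\lambda)\leq(3+9\lambda)/(10\lambda)\iff\lambda\geq 3/5$.

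The main obstacle is the simultaneous passage of $\hat{B}$ and $\hat{L}$ into $N(t)$ at $t=3\lambda$, a genuine qualitative departure from the non-flex setup of Lemma~\ref{2_CB-A_notflex} (where $\hat{B}$ enters at $t=2\lambda$ and $\hat{L}$ only later at $t=3\lambda$). This forces a single two-variable orthogonality computation in place of two successive one-variable ones, and then careful bookkeeping of the $\mathbb{Q}$-valued restriction $N(t)|_{\hat{M}}$ through the $\mathrm{A}_2$ singular point $q_B$. A useful sanity check is that the ratio at $q_B$ evaluates to $(1/3)/(25\lambda/63)=21/(25\lambda)$, coinciding with the global $B$-bound and therefore contributing no new constraint beyond those already in the statement.
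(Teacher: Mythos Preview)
Your proposal is correct and follows essentially the same approach as the paper: the same $(1,3)$-blowup along the inflectional tangent $L$, the same Mori-cone generators, the same two-phase Zariski decomposition with $\hat{B}$ and $\hat{L}$ entering $N(t)$ simultaneously at $t=3\lambda$, and the same values of $S_{S_2,(1-\lambda)C^2}(\hat{M})$, $S(W^{\hat{M}}_{\bullet,\bullet};q)$, and $A_{\hat{M},\Delta_{\hat{M}}}(q)$. Your explicit observation that $P(t)\cdot\hat{B}$ and $P(t)\cdot\hat{L}$ vanish together at $t=3\lambda$, and your sanity check that the $q_B$ ratio reproduces $21/(25\lambda)$, are both accurate and make the argument slightly more transparent than the paper's bare computation.
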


\begin{proof}
    Take the curve $L$ in $|A^1+A^2+B|$ that is tangent to $C^2$ at $p$. Let $\sigma_2:\hat{S}_2\rightarrow (S_2,(1-\lambda)C^2)$ be the $(1,3)$-blowup at $p$ with respect to $L$. Note that $q_B$ is an $\mathrm{A}_2$ singularity. This construction is illustrated below:

    \begin{center}
    \begin{tikzpicture}[scale=0.7, every node/.style={scale=0.7}]
        \draw (-8,3) -- (-4,3);
        \draw (-3.75,3) node {$A^1$};
        \draw (-8,5) -- (-4,5);
        \draw (-3.75,5) node {$A^2$};
        \draw (-6,2) -- (-6,6);
        \draw (-6,1.7) node {$B$}; 
        \draw (-7.5,4) -- (-4.5,4);
        \draw (-4.25,4) node {$L$};
        \draw (-8,2) .. controls (-20/3,6) and (-16/3,2) .. (-4,6);
        \draw (-7.6,2) node {$C^2$};
        \filldraw[red] (-6,4) circle (2pt);
        \draw[red] (-6.3,4.3) node {$p$};

        \draw[->] (-2.5,4) --(-3.5,4);
        \draw (-3,4.3) node {$\pi^2_1$};
        \draw[->] (-2.5,.5) -- (-3.5,1.5);
        \draw (-3.2,.8) node {$\sigma_2$};

        \draw (-2,2.5) -- (2,2.5);
        \draw (2.25,2.5) node {$A^1_1$};
        \draw (-2,5.5) -- (2,5.5);
        \draw (2.25,5.5) node {$A^2_1$};
        \draw (1.5,2) -- (1.5,6);
        \draw (1.5,1.7) node {$B_1$};
        \draw (-1.5,4) -- (1,4);
        \draw (-1.75,4) node {$L_1$};
        \draw (-2,2) .. controls (-1.5,10/3) and (-1,4) .. (-.5,4);
        \draw (1,6) .. controls (.5, 14/3) and (0,4) .. (-.5,4);
        \draw (-1.6,2) node {$C^2_1$};
        \draw[red] (-1.5,4.2) -- (2,3.5);
        \draw[red] (2.3,3.5) node {$M^1_1$};
        \filldraw[blue] (-.5,4) circle (2pt);

        \draw[->] (3.5,4) -- (2.5,4);
        \draw (3,4.3) node {$\pi^2_2$};

        \draw (4,2.5) -- (8,2.5);
        \draw (8.25,2.5) node {$A^1_2$};
        \draw (4,5.5) -- (8,5.5);
        \draw (8.25,5.5) node {$A^2_2$};
        \draw (7.5,2) -- (7.5,6);
        \draw (7.5,1.7) node {$B_2$};
        \draw (4.5,2) -- (6.5,6);
        \draw (4.5,1.7) node {$C^2_2$};
        \draw (5.75,2.75) -- (5.25,5.25);
        \draw (6.05, 2.75) node {$L_2$};
        \draw[red] (6.5,4.5) -- (8,3);
        \draw[red] (8.25,3) node {$M^1_2$};
        \draw[blue] (4.5,4) -- (7.25,4);
        \draw[blue] (4.1,4) node {$M^2_2$};
        \filldraw[green] (5.5,4) circle (2pt);

        \draw[->] (6,.5) -- (6,1.5);
        \draw (6.25,1) node {$\pi^2_3$};

        \draw (4,-3.5) -- (8,-3.5);
        \draw (8.25,-3.5) node {$A^1_3$};
        \draw (4,-.5) -- (8,-.5);
        \draw (8.25,-.5) node {$A^2_3$};
        \draw (7.5,-4) -- (7.5,0);
        \draw (7.5,.3) node {$B_3$};
        \draw (4.5,-4) -- (4.5,0);
        \draw (4.5,.3) node {$C^2_3$};
        \draw (5.5,-3) -- (5.5,-1);
        \draw (5.1,-1) node {$L_3$};
        \draw[red] (6.5,-1) -- (8,-2.5);
        \draw[red] (8.3,-2.5) node {$M^1_3$};
        \draw[blue] (6.5,-2.5) -- (7,-1);
        \draw[blue] (6.5,-2.8) node {$M^2_3$};
        \draw[green] (4,-2) -- (7,-2);
        \draw[green] (4,-2.3) node {$M^3_3$};

        \draw[->] (3.5,-2) -- (2.5,-2);
        \draw (3,-1.7) node {$\tau_2$};

        \draw (-2,-3.5) -- (2,-3.5);
        \draw (-2.25,-3.5) node {$\hat{A}^1$};
        \draw (-2,-.5) -- (2,-.5);
        \draw (-2.25,-.5) node {$\hat{A}^2$};
        \draw (1.5,-4) -- (1.5,0);
        \draw (1.5,.3) node {$\hat{B}$};
        \draw (-1.5,-4) -- (-1.5,0);
        \draw (-1.5,.3) node {$\hat{C}^2$};
        \draw (0,-1) -- (0,-3);
        \draw (-.3,-1) node {$\hat{L}$};
        \draw[green] (-2,-2) -- (2,-2);
        \filldraw[purple] (1.5,-2) circle (2pt);
        \draw[purple] (2.3,-2.4) node {$\frac{1}{3}(1,2)$};
        \draw[purple] (1.2,-2.3) node {$q_B$};
        \filldraw (-1.5,-2) circle (2pt);
        \draw (-1.8,-2.3) node {$q_{C^2}$};
        \filldraw (0,-2) circle (2pt);
        \draw (.3,-2.3) node {$q_L$};
    \end{tikzpicture}
    \end{center}

    We then obtain $\hat{L}\equiv\hat{A}^1+\hat{A}^2+\hat{B}-2\hat{M}$ and
    \[
    \begin{split}
        &\sigma_2^\ast A^1=\hat{A}^1,\quad\sigma_2^\ast A^2=\hat{A}^2,\quad\sigma_2^\ast B=\hat{B}+\hat{M},\quad\sigma_2^\ast L=\hat{L}+3\hat{M},\\
        &\sigma_2^\ast K_{S_2}=K_{\hat{S}_2}-3\hat{M},\quad\sigma_2^\ast C^2=\hat{C}^2+3\hat{M}.
    \end{split}
    \]
    In particular, the log discrepancy of $\hat{M}$ with respect to $(S_2,(1-\lambda)C^2)$ is
    \[
        A_{S_2,(1-\lambda)C^2}(\hat{M})=1+3\lambda.
    \]
    
    The intersections are  given by
        \[
    \begin{split}
        &(\hat{A}^i)^2=-1,\quad\hat{B}^2=-\frac{4}{3},\quad\hat{L}^2=-2,\quad\hat{M}^2=-\frac{1}{3},\quad\hat{B}\cdot\hat{M}=\frac{1}{3},\\
        &\hat{A}^i\cdot\hat{A}^{3-i}=\hat{A}^i\cdot\hat{L}=\hat{A}^i\cdot\hat{M}=\hat{B}\cdot\hat{L}=0,\quad\hat{A}^i\cdot\hat{B}=\hat{L}\cdot\hat{M}=1,\quad\text{for }i=1,2.
    \end{split}
    \]

    Since $T_2$ is a weak del Pezzo surface, Proposition~\ref{MDS} implies that $\hat{S}_2$ is a Mori dream space, and its Mori cone is spanned by $[\hat{A}^1]$, $[\hat{A}^2]$, $[\hat{B}]$, $[\hat{L}]$, and $[\hat{M}]$. Thus, we obtain
    \[
        \tau_{S_2,(1-\lambda)C^2}(\hat{M})=7\lambda,
    \]
    since
    \[
    \begin{split}
        \sigma_2^\ast\left(-K_{S_2}-(1-\lambda)C^2\right)-t\hat{M}&\equiv2\lambda\hat{A}^1+2\lambda\hat{A}^2+3\lambda\hat{B}+(3\lambda-t)\hat{M}\\
        &\equiv\frac{7\lambda-t}{2}\hat{A}^1+\frac{7\lambda-t}{2}\hat{A}^2+\frac{9\lambda-t}{2}\hat{B}+\frac{t-3\lambda}{2}\hat{L}.
    \end{split}
    \]
    The Zariski decomposition of the divisor is given by
     \begin{align*}
        P(t)=\begin{cases}
            2\lambda\hat{A}^1+2\lambda\hat{A}^2+3\lambda\hat{B}+(3\lambda-t)\hat{M}\\
            \frac{7\lambda-t}{4}(2\hat{A}^1+2\hat{A}^2+3\hat{B})
        \end{cases};\quad
        N(t)=\begin{cases}
            0, &\ \ \ 0\leq t\leq 3\lambda,\\
            \frac{t-3\lambda}{4}\hat{B}+\frac{t-3\lambda}{2}\hat{L}, &\ \ \ 3\lambda\leq t\leq 7\lambda.
        \end{cases}
    \end{align*}
    We then compute
    \[
        \mathrm{vol}\left(\sigma_2^\ast\left(-K_{S_2}-(1-\lambda)C^2\right)-t\hat{M}\right)=P(t)^2=\begin{cases}
            7\lambda^2-\frac{1}{3}t^2, &0\leq t\leq3\lambda,\\
            \frac{(7\lambda-t)^2}{4}, &3\lambda\leq t\leq 7\lambda,\\
        \end{cases}
    \]
    and hence,
    \[
        S_{S_2,(1-\lambda)C^2}(\hat{M})=\frac{10\lambda}{3}.
    \]
    From \eqref{2_F-C_ub}, we obtain the upper bound
    \begin{equation}\label{２_CF-E_flex_ub}
        \delta_p(S_2,(1-\lambda)C^2)\leq\min\left\{\frac{21}{25\lambda},\frac{3+9\lambda}{10\lambda}\right\}.
    \end{equation}
    On the other hand, for each $q$ on $\hat{M}$, 
    \[
        h(\hat{M},q,t)=\begin{cases}
            \frac{t^2}{18}, &0\leq t\leq3\lambda,\\
            \frac{7\lambda-t}{4}\cdot\mathrm{ord}_q\left(\frac{t-3\lambda}{12}q_B+\frac{t-3\lambda}{2}q_L\right)+\frac{(7\lambda-t)^2}{32}, &3\lambda\leq t\leq7\lambda,
        \end{cases}
    \]
    and hence,
    \[
        S(W^{\hat{M}}_{\bullet,\bullet};q)=\begin{cases}
            \frac{\lambda}{3}, &q\neq q_L,q_B\\
            \frac{25\lambda}{63}, &q=q_B,\\
            \frac{5\lambda}{7}, &q=q_L.
        \end{cases}
    \]
    Put $K_{\hat{M}}+\Delta_{\hat{M}}:=(K_{\hat{S}_2}+(1-\lambda)\hat{C}+\hat{M})|_{\hat{M}}$, then we have
    \[
        A_{\hat{M},\Delta_{\hat{M}}}(q)=\begin{cases}
            1, &q\neq q_B,q_{C^2},\\
            \frac{1}{3}, &q=q_B,\\
            \lambda, &q=q_{C^2}.
        \end{cases}
    \]
    It then follows from Theorem~\ref{AZ} that
    \begin{equation}\label{２_CF-E_flex_lb}
        \delta_p(S_2,(1-\lambda)C^2)
        \geq\min\left\{\frac{3+9\lambda}{10\lambda},\frac{3}{\lambda},3,\frac{7}{5\lambda},\frac{21}{25\lambda}\right\}
        =\begin{cases}
            \frac{21}{25\lambda}, &\frac{3}{5}\leq\lambda\leq 1,\\
            \frac{3+9\lambda}{10\lambda}, &\frac{1}{7}\leq\lambda\leq\frac{3}{5},\\
            3, &0<\lambda\leq\frac{1}{7}.
        \end{cases}
    \end{equation}
    The proof is completed by combining \eqref{２_CF-E_flex_ub} and \eqref{２_CF-E_flex_lb}.    
\end{proof}

\begin{lemma}\label{2_CAB} 
    Suppose that $p$ is the intersection point of two $(-1)$-curves, and that $C^2$ passes through $p$.
    Then, $$\min\left\{\frac{21}{25\lambda},\frac{7+7\lambda}{16\lambda},\frac{7}{3}\right\}\leq\delta_p(S_2,(1-\lambda)C^2)\leq\min\left\{\frac{21}{25\lambda},\frac{7+7\lambda}{16\lambda}\right\}.$$ In particular, for $\lambda\geq\frac{3}{13}$, we have $$\delta_p(S_2,(1-\lambda)C^2)=\min\left\{\frac{21}{25\lambda},\frac{7+7\lambda}{16\lambda}\right\}=\begin{cases}
        \frac{21}{25\lambda}, &\frac{23}{25}\leq\lambda\leq 1,\\
        \frac{7+7\lambda}{16\lambda}, &\frac{3}{13}\leq\lambda\leq\frac{23}{25}.
    \end{cases}$$
\end{lemma}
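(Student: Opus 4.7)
The plan is to take $\sigma_2\colon \hat{S}_2\to S_2$ to be the ordinary blowup at $p$, which (after relabeling) I may assume is the point $A^1\cap B$. Because $C^2$ is smooth at $p$ and the exceptional divisor $\hat{M}$ meets $\hat{C}^2$ transversely in one point, the pair $(\hat{S}_2,(1-\lambda)\hat{C}^2+\hat{M})$ is plt, so $\sigma_2$ is the plt blowup to which Theorem~\ref{AZ} will be applied. The standard pullback formulas give $\sigma_2^\ast A^1=\hat{A}^1+\hat{M}$, $\sigma_2^\ast B=\hat{B}+\hat{M}$, $\sigma_2^\ast A^2=\hat{A}^2$, $\sigma_2^\ast C^2=\hat{C}^2+\hat{M}$, and $A_{S_2,(1-\lambda)C^2}(\hat{M})=1+\lambda$. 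On $\hat{S}_2$ the strict transforms $\hat{A}^1$ and $\hat{B}$ become disjoint $(-2)$-curves (an $\mathrm{A}_1\sqcup \mathrm{A}_1$ configuration), while $\hat{A}^2$ and $\hat{M}$ are $(-1)$-curves; in particular $\hat{S}_2$ is a weak del Pezzo surface, hence a Mori dream space by Proposition~\ref{MDS}, with $\overline{NE}(\hat{S}_2)=\mathrm{Cone}\{[\hat{A}^1],[\hat{A}^2],[\hat{B}],[\hat{M}]\}$.

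Setting $D(t):=\sigma_2^\ast(-K_{S_2}-(1-\lambda)C^2)-t\hat{M}\equiv 2\lambda\hat{A}^1+2\lambda\hat{A}^2+3\lambda\hat{B}+(5\lambda-t)\hat{M}$, I will carry out the Zariski decomposition in three phases dictated by the signs of $D(t)\cdot\hat{A}^1=D(t)\cdot\hat{B}=\lambda-t$ and $P(t)\cdot\hat{A}^2$: (i) $D$ is nef for $0\le t\le\lambda$; (ii) for $\lambda\le t\le 3\lambda$, one extracts $N(t)=\tfrac{t-\lambda}{2}(\hat{A}^1+\hat{B})$; (iii) for $3\lambda\le t\le 5\lambda$, all three of $\hat{A}^1,\hat{B},\hat{A}^2$ must be extracted simultaneously, yielding (by a coupled linear system) $N(t)=\tfrac{t-\lambda}{2}\hat{A}^1+(t-2\lambda)\hat{B}+(t-3\lambda)\hat{A}^2$, with pseudoeffective threshold $\tau=5\lambda$. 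The resulting volumes $7\lambda^2-t^2,\ 8\lambda^2-2\lambda t,\ (5\lambda-t)^2/2$ integrate to give $S_{S_2,(1-\lambda)C^2}(\hat{M})=16\lambda/7$. Together with \eqref{2_F-C_ub} this already produces the upper bound $\min\{21/(25\lambda),(7+7\lambda)/(16\lambda)\}$.

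For the matching lower bound, I will compute $P(t)\cdot\hat{M}=t,\lambda,(5\lambda-t)/2$ in the three phases and restrict $N(t)$ to $\hat{M}$, which contributes only at $q_{A^1}$ and $q_B$. A routine integration of $h(\hat{M},q,t)$ gives $S(W^{\hat{M}}_{\bullet,\bullet};q)=3\lambda/7$ at a generic point of $\hat{M}$, $23\lambda/21$ at $q_{A^1}$, and $25\lambda/21$ at $q_B$. Since $\hat{S}_2$ is smooth along $\hat{M}$ and the only boundary component meeting $\hat{M}$ is $(1-\lambda)\hat{C}^2$ at $q_{C^2}$, the different gives $A_{\hat{M},\Delta_{\hat{M}}}(q_{C^2})=\lambda$ and $A_{\hat{M},\Delta_{\hat{M}}}(q)=1$ elsewhere. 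Theorem~\ref{AZ} then yields $\delta_p(S_2,(1-\lambda)C^2)\ge\min\{(7+7\lambda)/(16\lambda),\,7/(3\lambda),\,21/(23\lambda),\,21/(25\lambda),\,7/3\}$, which simplifies to the claimed bound after observing that $21/(23\lambda)\ge(7+7\lambda)/(16\lambda)$ throughout $(0,1]$ and that $7/(3\lambda)$ is never the minimum. The main delicate step is phase (iii): because $\hat{B}^2=-2$ after the single blowup, extracting $\hat{A}^2$ at $t=3\lambda$ disturbs the previously balanced intersection $P(t)\cdot\hat{B}=0$, so all three extraction multiplicities must be recomputed together rather than incrementally, and one must verify that the resulting $P(t)$ remains nef all the way to $t=5\lambda$.
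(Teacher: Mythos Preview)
Your proposal is correct and follows essentially the same approach as the paper: the ordinary blowup at $p$, the same three-phase Zariski decomposition with the same negative parts, identical volume computations yielding $S(\hat{M})=16\lambda/7$, and the same refined $S(W^{\hat{M}}_{\bullet,\bullet};q)$ values at $q_{A^1}$, $q_B$, and a generic point, combined with the different on $\hat{M}$ to apply Theorem~\ref{AZ}. Your explicit remark about having to recompute all three extraction coefficients simultaneously in phase (iii) is a helpful clarification not spelled out in the paper, but the underlying calculation is the same.
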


\begin{proof}
    We may assume that $p$ is the intersection point of $C^2$ and $A^1$. Let $\sigma_2:\hat{S}_2\rightarrow (S_2,(1-\lambda)C^2)$ be the blowup at $p$ with the exceptional curve $\hat{M}$. This ordinary blowup realizes the desired plt blowup of $(S_2,(1-\lambda)C_2)$. This can be illustrated as follows:

    \begin{center}
    \begin{tikzpicture}[scale=0.7, every node/.style={scale=0.7}]
        \draw (-4,-.5) -- (-1,-.5);
        \draw (-4.3,-.5) node {$A^1$};
        \draw (-4,.5) -- (-1,.5);
        \draw (-4.3,.5) node {$A^2$};
        \draw (-3,-1.5) -- (-3,1.5);
        \draw (-3,-1.8) node {$B$};
        \draw (-4,-1.5) -- (-1,1.5);
        \draw (-4.3,-1.8) node {$C^2$};
        \filldraw[red] (-3,-.5) circle (2pt);
        \draw[red] (-2.7,-.8) node {$p$};

        \draw[->] (.5,0) -- (-.5,0);
        \draw (0,.3) node {$\sigma$};

        \draw (1,-1) -- (4,-1);
        \draw (4.3,-1) node {$\hat{A}^1$};
        \draw (3,-.5) -- (3,1.5);
        \draw (3,1.8) node {$\hat{A}^2$};
        \draw (1,1) -- (4,1);
        \draw (4.3,1) node {$\hat{B}$};
        \draw (1,0) -- (4,0);
        \draw (4.3,0) node {$\hat{C}^2$};
        \draw[red] (2,-1.5) -- (2,1.5);
        \draw[red] (2,-1.8) node {$\hat{M}$};
        \filldraw (2,1) circle (2pt);
        \draw (1.7,1.3) node {$q_B$};
        \filldraw (2,0) circle (2pt);
        \draw (1.7,.3) node {$q_{C^2}$};
        \filldraw (2,-1) circle (2pt);
        \draw (1.7,-.7) node {$q_{A^1}$};
    \end{tikzpicture}
    \end{center}

    The pullbacks by $\sigma_2$ are computed as
    \[
    \begin{split}
        &\sigma_2^\ast A^1=\hat{A}^1+\hat{M},\quad\sigma_2^\ast A^2=\hat{A}^2,\quad\sigma_2^\ast F=\hat{F}+\hat{M},\\
        &\sigma_2^\ast K_{S_2}=K_{\hat{S}_2}-\hat{M},\quad\sigma_2^\ast C^2=\hat{C}^2+\hat{M}.
    \end{split}
    \]
    It directly follows that
    \[
        A_{S_2,(1-\lambda)C^2}(\hat{M})=1+\lambda.
    \]

    The weak del Pezzo surface $\hat{S}_2$ is a Mori dream space, and its Mori cone is generated by 
    $[\hat{A}^1]$, $[\hat{A}^2]$, $[\hat{F}]$, and $[\hat{M}]$.
    We have
    \[
        \sigma_2^\ast\left(-K_{S_2}-(1-\lambda)C^2\right)-t\hat{M}\equiv2\lambda\hat{A}^1+2\lambda\hat{A}^2+3\lambda\hat{B}+(5\lambda-t)\hat{M},
    \]
    and it is pseudoeffective only for $t$ not exceeding $5\lambda$. Its Zariski decomposition is given by
    \begin{align*}
        P(t)&=\begin{cases}
            2\lambda\hat{A}^1+2\lambda\hat{A}^2+3\lambda\hat{B}+(5\lambda-t)\hat{M}, &0\leq t\leq\lambda,\\
            \frac{5\lambda-t}{2}\hat{A}^1+2\lambda\hat{A}^2+\frac{7\lambda-t}{2}\hat{B}+(5\lambda-t)\hat{M}, &\lambda\leq t\leq 3\lambda,\\
            \frac{5\lambda-t}{2}(\hat{A}^1+2\hat{A}^2+2\hat{B}+2\hat{M}), &3\lambda\leq t\leq 5\lambda;
        \end{cases}
        \\
        N(t)&=\begin{cases}
            0, &0\leq t\leq \lambda,\\
            \frac{t-\lambda}{2}(\hat{A}^1+\hat{B}), &\lambda\leq t\leq3\lambda,\\
            \frac{t-\lambda}{2}\hat{A}^1+(t-3\lambda)\hat{A}^2+(t-2\lambda)\hat{B}, &3\lambda\leq t\leq 5\lambda.
        \end{cases}
    \end{align*}
    Then
    \[
        \mathrm{vol}\left(\sigma_2^\ast\left(-K_{S_2}-(1-\lambda)C^2\right)-t\hat{M}\right)=P(t)^2=\begin{cases}
            7\lambda^2-t^2, &0\leq t\leq\lambda,\\
            -2\lambda t+8\lambda^2, &\lambda\leq t\leq 3\lambda,\\
            \frac{(5\lambda-t)^2}{2}, &3\lambda\leq t\leq 5\lambda,\\
        \end{cases}
    \]
    and hence,
    \[
        S_{S_2,(1-\lambda)C^2}(\hat{M})=\frac{16\lambda}{7}.
    \]
    From \eqref{2_F-C_ub}, we obtain the upper bound
    \begin{equation}\label{２_CEF_ub}
        \delta_p(S_2,(1-\lambda)C^2)\leq\min\left\{\frac{21}{25\lambda},\frac{7+7\lambda}{16\lambda}\right\}.
    \end{equation}
    For each $q$ on $\hat{M}$, 
    \[
        h(\hat{M},q,t)=\begin{cases}
            \frac{t^2}{2}, &0\leq t\leq\lambda,\\
            \lambda\cdot\mathrm{ord}_q\left(\frac{t-\lambda}{2}q_{A^1}+\frac{t-\lambda}{2}q_B\right)+\frac{\lambda^2}{2}, &\lambda\leq t\leq3\lambda,\\
            \frac{5\lambda-t}{2}\cdot\mathrm{ord}_q\left(\frac{t-\lambda}{2}q_{A^1}+(t-2\lambda)q_B)\right)+\frac{(5\lambda-t)^2}{8}, &3\lambda\leq t\leq5\lambda,
        \end{cases}
    \]
    and hence,
    \[
        S(W^{\hat{M}}_{\bullet,\bullet};q)=\begin{cases}
            \frac{3\lambda}{7}, &q\neq q_B,q_{A^1}\\
            \frac{25\lambda}{21}, &q=q_B,\\
            \frac{23\lambda}{21}, &q=q_{A^1}.
        \end{cases}
    \]
    Put $K_{\hat{M}}+\Delta_{\hat{M}}:=(K_{\hat{S}_2}+(1-\lambda)\hat{C}+\hat{M})|_{\hat{M}}$, then
    \[
        A_{\hat{M},\Delta_{\hat{M}}}(q)=\begin{cases}
            1, &q\neq q_{C^2},\\
            \lambda, &q=q_{C^2}.
        \end{cases}
    \]
    It then follows from Theorem~\ref{AZ} that
    \begin{equation}\label{２_CEF_lb}
        \delta_p(S_2,(1-\lambda)C^2)
        \geq\min\left\{\frac{7+7\lambda}{16\lambda},\frac{7}{3\lambda},\frac{7}{3},\frac{21}{25\lambda},\frac{21}{23\lambda}\right\}
        =\begin{cases}
            \frac{21}{25\lambda}, &\frac{23}{25}\leq\lambda\leq 1,\\
            \frac{7+7\lambda}{16\lambda}, &\frac{3}{13}\leq\lambda\leq\frac{23}{25},\\
            \frac{7}{3}, &0<\lambda\leq\frac{3}{13}.
        \end{cases}
    \end{equation}
    Consequently, \eqref{２_CEF_ub} and \eqref{２_CEF_lb} complete the proof.
\end{proof}

Observe that at least three irreducible members in the pencil $|N^i|$ are tangent to $C^2$ for each~$i$ and that
the plane cubic curve $\phi_2(C^2)$ has at least six inflection points outside $\phi_2(B)$.
It then follows from Lemmas~\ref{2_S-C}, \ref{2_C-AB_notflex}, \ref{2_C-AB_flex}, and \ref{2_CN_tangent} that
\begin{equation}\label{2-semitotal}
    \inf_{p\in S_2\setminus(C^2\cap(A^1\cup A^2\cup B))}\delta_p(S_2,(1-\lambda)C^2)\begin{cases}
        =\frac{21}{25\lambda},&\frac{16}{25}\leq\lambda\leq1,\\
        =\frac{7+14\lambda}{19\lambda},&\frac{23}{68}\leq\lambda\leq\frac{16}{25},\\
        \geq\frac{42}{23},&0<\lambda\leq\frac{23}{68}.
    \end{cases}
\end{equation}

Recall that the line $\phi_2(B)$ and the smooth cubic curve $\phi_2(C^2)$ pass through the points $x_1$ and $x_2$. Let $y$ be the remaining intersection point. Note that $y$ can be either $x_1$ or~$x_2$.

First, suppose that $y$ is $x_1$ or $x_2$. We may assume $y=x_1$. Since $y$ cannot be an inflection point, there are two cases: $x_2$ is an inflection point, or it is not. If $x_2$ is not a inflection point, then, combining with \eqref{2-semitotal}, we obtain from Lemmas~\ref{2_CB-A_notflex} and \ref{2_CAB}  that \[
    \delta(S_2,(1-\lambda)C^2)\begin{cases}
        =\frac{21}{25\lambda}, &\frac{23}{25}\leq\lambda\leq1,\\
        =\frac{7+7\lambda}{16\lambda}, &\frac{23}{73}\leq\lambda\leq\frac{23}{25},\\
        \geq\frac{42}{23}, &0<\lambda\leq\frac{23}{73}.
    \end{cases}
\]
If $x_2$ is an inflection point, Lemmas~\ref{2_CB-A_flex} and \ref{2_CAB} give the same $\delta(S_2,(1-\lambda)C^2)$. Consequently, these results directly imply the first statement of Theorem~\ref{thm2}.

We now suppose that $y$ is neither $x_1$ nor $x_2$. Recall that if two of the three points $x_1$, $x_2$,~$y$ are inflection points, then so is the remaining point. As a consequence, we obtain the following possibilities:
\begin{enumerate}
    \item None of $x_1$, $x_2$, and $y$ are inflection points;
    \item One of $x_1$ and $x_2$ is an inflection point, and the other two points are not;
    \item The point $y$ is an inflection point, and the others are not;
    \item All of them are inflection points.
\end{enumerate}

For each case, combining with \eqref{2-semitotal}, we obtain the global $\delta$-invariant as follows:
\begin{enumerate}
    \item By Lemmas~\ref{2_CA-B_notflex} and \ref{2_CB-A_notflex},
    \[
        \delta(S_2,(1-\lambda)C^2)\begin{cases}
        =\frac{21}{25\lambda},&\frac{17}{25}\leq\lambda\leq1,\\
        =\frac{1+\lambda}{2\lambda},&\frac{5}{9}\leq\lambda\leq\frac{17}{25},\\
        =\frac{7+14\lambda}{19\lambda},&\frac{23}{68}\leq\lambda\leq\frac{5}{9},\\
        \geq\frac{42}{23},&0<\lambda\leq\frac{23}{68};
    \end{cases}
    \]
    \item By Lemmas~\ref{2_CA-B_notflex}, \ref{2_CA-B_flex}, and \ref{2_CB-A_notflex},
    \[
        \delta(S_2,(1-\lambda)C^2)\begin{cases}
            =\frac{21}{25\lambda}, &\frac{18}{25}\leq\lambda\leq 1,\\
            =\frac{21+42\lambda}{61\lambda}, &\frac{23}{76}\leq\lambda\leq\frac{18}{25},\\
            \geq\frac{42}{23}, &0<\lambda\leq\frac{23}{76};
        \end{cases}
    \]
    \item  By Lemmas~\ref{2_CA-B_notflex} and \ref{2_CB-A_flex},
    \[
        \delta(S_2,(1-\lambda)C^2)\begin{cases}
            =\frac{21}{25\lambda}, &\frac{17}{25}\leq\lambda\leq 1,\\
            =\frac{1+\lambda}{2\lambda}, &\frac{5}{9}\leq \lambda\leq\frac{17}{25},\\
            =\frac{7+14\lambda}{19\lambda}, &\frac{13}{31}\leq\lambda\leq\frac{5}{9},\\
            =\frac{3+9\lambda}{10\lambda}, &\frac{23}{71}\leq\lambda\leq\frac{13}{31},\\
            \geq\frac{42}{23}, &0<\lambda\leq\frac{23}{71};
        \end{cases}
    \]
    \item By Lemmas~\ref{2_CA-B_flex} and \ref{2_CB-A_flex},
    \[
        \delta(S_2,(1-\lambda)C^2)\begin{cases}
            =\frac{21}{25\lambda}, &\frac{18}{25}\leq\lambda\leq 1,\\
            =\frac{21+42\lambda}{61\lambda}, &\frac{23}{76}\leq\lambda\leq\frac{18}{25},\\
            \geq\frac{42}{23}, &0<\lambda\leq\frac{23}{76}.
        \end{cases}
    \]
\end{enumerate}
The second statement of Theorem~\ref{thm2} then follows by combining these results.

\textbf{Acknowledgements.} The authors were supported by IBS-R003-D1 from the Institute for Basic Science in Korea.

\bibliographystyle{plain}

\end{document}